\newtheorem{theorem}{Theorem}
\newtheorem{lemma}[theorem]{Lemma}
\newtheorem{definition}[theorem]{Definition}
\newtheorem{remark}[theorem]{Remark}
\newtheorem{proposition}[theorem]{Proposition}
\numberwithin{theorem}{section}
\numberwithin{equation}{section}
\def\N {{\mathbb N}}
\def\Z {{\mathbb Z}}
\def\R {{\mathbb R}}
\def\C {{\mathbb C}}
\DeclareMathOperator{\diff}{d\!}
\DeclareMathOperator{\supp}{supp}
\DeclarePairedDelimiter\abs{\lvert}{\rvert}
\DeclarePairedDelimiter\norm{\lVert}{\rVert}
\def\<{\left\langle}
\def\>{\right\rangle}
\title{Duality for outer $L^p_\mu(\ell^r)$ spaces and relation to tent spaces}
\author{Marco Fraccaroli}
\address{Mathematisches Institut, Universit\"at Bonn, Endenicher Allee 60, 53115 Bonn, Germany}
\email{mfraccar@math.uni-bonn.de}
\subjclass[2010]{42B35 (Primary), 46E30 (Secondary)}
\keywords{outer $L^p$ spaces, tent spaces, K\"{o}the duality, embedding theorems, outer measures.}
\begin{document}
	
	\date{\today}
	
	\begin{abstract}		
	We prove that the outer $L^p_\mu(\ell^r)$ spaces, introduced by Do and Thiele, are isomorphic to Banach spaces, and we show the expected duality properties between them for $1 < p \leq \infty, 1 \leq r < \infty$ or $p=r \in \{ 1, \infty \}$ uniformly in the finite setting. In the case $p=1, 1 < r \leq \infty$, we exhibit a counterexample to uniformity. We show that in the upper half space setting these properties hold true in the full range $1 \leq p,r \leq \infty$. These results are obtained via greedy decompositions of functions in $L^p_\mu(\ell^r)$. As a consequence, we establish the equivalence between the classical tent spaces $T^p_r$ and the outer $L^p_\mu(\ell^r)$ spaces in the upper half space. Finally, we give a full classification of weak and strong type estimates for a class of embedding maps to the upper half space with a fractional scale factor for functions on $\R^d$.
	\end{abstract}
	
	\maketitle

\section{Introduction}
The $L^p$ theory for outer measure spaces discussed in \cite{MR3312633} generalizes the classical product, or iteration, of weighted $L^p$ quasi-norms. Since we are mainly interested in positive objects, we assume every function to be nonnegative unless explicitly stated. We first focus on the finite setting.

On the Cartesian product $X$ of two finite sets equipped with strictly positive weights $(Y,\mu),(Z,\nu)$, we can define the classical product, or iterated, $L^\infty L^r, L^pL^r$ spaces for $0 < p , r < \infty$ by the quasi-norms 
\begin{align*}
\norm{f}_{L^\infty((Y,\mu),L^r(Z,\nu))} &= \sup_{y \in Y} (\sum_{z \in Z} \nu(z) f(y,z)^r)^{\frac{1}{r}} \\
&= \sup_{y \in Y} (\mu(y)^{-1} \sum_{z \in Z} \omega(y,z) f(y,z)^r)^{\frac{1}{r}}, \\
\norm{f}_{L^p((Y,\mu),L^r(Z,\nu))} &= (\sum_{y \in Y} \mu(y) (\sum_{z \in Z} \nu(z) f(y,z)^r)^{\frac{p}{r}})^{\frac{1}{p}},
\end{align*}
where we denote by $\omega=\mu \otimes \nu$ the induced weight on $X$. In both cases, the inner $L^r$ quasi-norm may be replaced by an $L^\infty$ norm as well. For $1 \leq p,r \leq \infty$, the objects defined in the display are in fact norms.

We first generalize this setting by making the fibers $Z$ dependent on $y$. Instead of a Cartesian product structure on $X$, we assume $X$ to be partitioned into a collection of subsets $X_y$ parametrized by the elements of $Y$, and let the inner summation run over $x \in X_y$. 

The $L^p$ spaces associated with an outer measure space $(X,\mu)$, or outer $L^p$ spaces, further generalize this construction. The idea is to replace a partition by a covering. An \emph{outer measure} $\mu$ on $X$ is a monotone, subadditive function from $\mathcal{P}(X)$, the power set of $X$, to the extended positive half-line, attaining the value $0$ on the empty set. A standard way to generate an outer measure is via a pre-measure $\sigma$, a function from a collection of subsets $\mathcal{E} \subseteq \mathcal{P}(X)$ to the positive half-line, by means of covering an arbitrary subset of $X$ by elements of $\mathcal{E}$. Namely, for every $A \subseteq X$, we define
\begin{equation} \label{eq:outer_measure_from_pre_measure}
	\mu(A) = \inf\{ \sum_{E \in \mathcal{E}'} \sigma(E) \colon \mathcal{E}' \subseteq \mathcal{E}, A \subseteq \bigcup_{E \in \mathcal{E}'} E \},
\end{equation}
with the understanding that an empty sum is $0$ and that if $A$ is not covered by $\mathcal{E}$, then the infimum is $\infty$. In general, an outer measure need not generate an interesting measure by restriction to the Carath\'{e}odory measurable sets, for example if they are only the empty set and $X$.
Finally, for $\omega$ a strictly positive weight on $X$, $0< r < \infty$, let $\ell^r$ be the function from the Cartesian product of $\mathcal{B}(X)$, the set of Borel measurable functions on $X$, and $\mathcal{P}(X)$, the power set of $X$, defined by
\begin{equation} \label{eq:size}
\ell^r(f)(A) = \ell^r(f,A) = (\mu(A)^{-1} \sum_{x \in A} \omega(x) f(x)^r)^{\frac{1}{r}}.
\end{equation}
The reader familiar with the theory of outer $L^p$ spaces developed in \cite{MR3312633} can recognize that $\ell^r$ is a \emph{size}.

For $0<p,r<\infty$, we can define the outer $L^\infty_\mu(\ell^r),L^p_\mu(\ell^r),L^{p,\infty}_\mu(\ell^r)$ spaces by the quasi-norms
\begin{align}
\label{eq:outer_L_infty_quasi_norm}
\norm{f}_{L^\infty_\mu(\ell^r)} &= \norm{f}_{L^{\infty,\infty}_\mu(\ell^r)} = \sup_{A \subseteq X} \ell^r(f)(A) = \sup_{A \subseteq X} (\mu(A)^{-1} \sum_{x \in A} \omega(x) f(x)^r)^{\frac{1}{r}}, \\
\label{eq:outer_L_p_quasi_norm}
\norm{f}_{L^p_\mu(\ell^r)} &= ( \int_{0}^\infty p \lambda^p \inf \{ \mu(A) \colon A \subseteq X, \norm{f 1_{A^c}}_{L^\infty(\ell^r)} \leq \lambda \} \frac{\diff \lambda}{\lambda})^{\frac{1}{p}}, \\
\label{eq:outer_L_p_infty_quasi_norm}
\norm{f}_{L^{p,\infty}_\mu(\ell^r)} &= ( \sup_{\lambda > 0} \lambda^p \inf \{ \mu(A) \colon A \subseteq X, \norm{f 1_{A^c}}_{L^\infty(\ell^r)} \leq \lambda \} )^{\frac{1}{p}},
\end{align}
where in all the cases the inner $L^r$ quasi-norm may be replaced by an $L^\infty$ norm as well. If the outer measure is generated via a pre-measure, in \eqref{eq:outer_L_infty_quasi_norm} it is enough to take the supremum over the elements of $\mathcal{E}$. The integral in \eqref{eq:outer_L_p_quasi_norm} is reminiscent of the layer-cake representation for the classical $L^p$ norm on a measure space. The subtle point of the theory of outer $L^p$ spaces discussed in \cite{MR3312633} we want to stress is that, in general, the infima in the last display do not stand for outer measures of super level sets for $f$, due to the $L^r$ averaging interplay between $\mu$ and $\omega$. The novelty of the outer $L^p$ spaces consists of allowing for a different way to evaluate the magnitude of a function to define the level sets, in our case the $L^r$ averages, rather than the $L^\infty$ norm. When $r=\infty$, the infimum specializes to the classical concept of the outer measure of the set where $f$ is strictly greater than $\lambda$, namely $\mu(\{ f > \lambda\})$, and the $L^p$ quasi-norm becomes a Choquet integral, but in general there is no relation between the two objects. To shorten the notation, we drop the subscript $\mu$ in $L^p_\mu(\ell^r)$ and we refer to the outer $L^p$ spaces with the symbol $L^p(\ell^r)$. Moreover, we denote the infima in \eqref{eq:outer_L_p_quasi_norm} and \eqref{eq:outer_L_p_infty_quasi_norm} associated with $f,\lambda$ by
\begin{equation} \label{eq:super_level}
\mu(\ell^r (f) > \lambda),
\end{equation}
and we refer to it as the \emph{super level measure}. 

In the first part of this paper, we further develop the theory of outer $L^p$ spaces. In \cite{MR3312633}, the focus was put on the real interpolation features of the outer $L^p$ spaces, such as Marcinkiewicz interpolation and H\"{o}lder's inequality, while other aspects of the theory of these spaces remained untouched. For example, whether the outer $L^p$ quasi-norms are equivalent to norms, and therefore the outer $L^p$ spaces are isomorphic to Banach spaces, or whether they can be recovered as a supremum of a pairing with functions in an appropriate outer $L^{p'}$ space. The first novelty of the paper is to establish the expected properties for the outer $L^p$ spaces where the size is defined by an $L^r$ norm. They follow by the sharpness of the H\"{o}lder's inequality in the sense of the following inequality,
\begin{equation} \label{eq:norm_realization}
	\norm{f}_{L^p(\ell^r)} \leq C \sup_{\norm{g}_{L^{p'}(\ell^{r'})} = 1} \norm{fg}_{L^1(X,\omega)},
\end{equation}
where the constant $C$ is independent of $f \in L^p(\ell^r)$, and $L^1(X,\omega)$ stands for the classical $L^1$ space on $X$ with the measure associated with the weight $\omega$.
\begin{theorem} \label{thm:collapsing_Holder_triangular_finite}
Let $0<p,r \leq \infty$. There exists a constant $C=C(p,r)$ such that, for every finite set $X$, outer measure $\mu$, and strictly positive weight $\omega$, the following properties hold true.
\begin{enumerate} [(i)]
\item For $0 < p=r \leq \infty$, for every $f \in L^p(\ell^p)$,
\begin{equation*}
	\frac{1}{C} \norm{f}_{L^p(X,\omega)} \leq \norm{f}_{L^p(\ell^p)} \leq C \norm{f}_{L^p(X,\omega)}.
\end{equation*}
\item For $1 < p \leq \infty, 1 \leq r < \infty$ or $p=r \in \{1,\infty \}$, for every $f \in L^p(\ell^r)$,
\begin{equation*}
	\frac{1}{C} \sup_{\norm{g}_{L^{p'}(\ell^{r'})} = 1} \norm{fg}_{L^1(X,\omega)} \leq \norm{f}_{L^p(\ell^r)} \leq C \sup_{\norm{g}_{L^{p'}(\ell^{r'})} = 1} \norm{fg}_{L^1(X,\omega)}. 
\end{equation*}
\item For $1 < p \leq \infty, 1 \leq r < \infty$ or $p=r \in \{ 1, \infty \}$, for every $\{ f_n \}_{n \in \N } \subseteq L^p(\ell^r)$,
\begin{equation*}
	\norm{\sum_{n \in \N} f_n}_{L^p(\ell^r)} \leq C \sum_{n \in \N} \norm{f_n}_{L^p(\ell^r)}.
\end{equation*}
\end{enumerate}
Therefore, for $1 < p \leq \infty, 1 \leq r < \infty$ or $p=r \in \{ 1 , \infty \}$, the outer $L^p(\ell^r)$ quasi-norm is equivalent to a norm, the outer $L^p(\ell^r)$ space is isomorphic to a Banach space, and it is the K\"{o}the dual space of the outer $L^{p'}(\ell^{r'})$ space.
\end{theorem}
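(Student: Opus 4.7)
The plan is to dispatch the three parts in order, deriving (ii) and (iii) from a common greedy dyadic decomposition of $f$, and then reading off the Banach space / K\"othe duality formally. Part (i), the collapsing $L^p(\ell^p) \sim L^p(X,\omega)$, is a direct manipulation of the layer-cake definition \eqref{eq:outer_L_p_quasi_norm}: one shows that on a finite $X$ the super level measure $\mu(\ell^p(f) > \lambda)$ is comparable to $\mu$ evaluated on the set where $f^p$ is large in a weighted sense, by testing near-optimal sets against one-element subsets. The endpoint cases $p = r \in \{1,\infty\}$ of (ii) and (iii) then reduce to (i) and the classical $L^1$, $L^\infty$ theory on $(X,\omega)$.

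For the main range $1 < p \leq \infty$, $1 \leq r < \infty$, I fix $f$ and dyadically index the levels $\lambda_n = 2^n$. Choose near-optimal sets $A_n \subseteq X$ with $\mu(A_n) \leq 2\mu(\ell^r(f) > \lambda_n)$ and $\norm{f 1_{A_n^c}}_{L^\infty(\ell^r)} \leq \lambda_n$, and replace them by $\tilde A_n = \bigcup_{m \geq n} A_m$; by subadditivity of $\mu$ the $\tilde A_n$ are nested and still satisfy $\mu(\tilde A_n) \lesssim \mu(\ell^r(f) > \lambda_n)$. The geometric-series layer cake then gives $\norm{f}_{L^p(\ell^r)}^p \sim \sum_n \lambda_n^p (\mu(\tilde A_n) - \mu(\tilde A_{n+1}))$, and I split $f = \sum_n f_n$ with $f_n = f 1_{\tilde A_n \setminus \tilde A_{n+1}}$. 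On each annulus, classical $L^r$--$L^{r'}$ duality (applied to the weight $\omega$ restricted to $\tilde A_n \setminus \tilde A_{n+1}$) produces a pointwise witness $g_n$ supported there with $\ell^{r'}(g_n) \sim 1$ and $\sum_x \omega(x) f_n(x) g_n(x) \sim \lambda_n \mu(\tilde A_n \setminus \tilde A_{n+1})$. Assembling $g = \sum_n c_n g_n$ with coefficients $c_n$ matched to the dyadic level and to the layer-cake weighting of $L^{p'}(\ell^{r'})$ yields a single function of outer norm $\lesssim 1$ for which the pairing $\sum_x \omega(x) f(x) g(x)$ is $\gtrsim \norm{f}_{L^p(\ell^r)}$; combined with the outer H\"older inequality from \cite{MR3312633} this establishes (ii). For (iii), once (ii) is in hand, the right-hand supremum is a supremum of positive linear functionals, hence a genuine norm equivalent to $\norm{\cdot}_{L^p(\ell^r)}$, so the quasi-triangle inequality with a universal constant is automatic, and the countable version follows by monotone convergence on the finite set $X$.

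The main obstacle is the second step: verifying that the assembled dual function $g$ has $L^{p'}(\ell^{r'})$ outer norm controlled by a universal constant. This forces the nested family $\tilde A_n$ to do double duty as near-optimal level sets for $g$ itself, which is precisely why telescoping $A_n$ into $\tilde A_n$ is essential and why, as the paper's abstract foreshadows, the case $p = 1$ with $1 < r \leq \infty$ ultimately fails uniformly. Once (ii) and (iii) are in place, the Banach space structure and K\"othe duality drop out formally: $\norm{\cdot}_{L^p(\ell^r)}$ is equivalent to a genuine norm, the finite-dimensional ambient space is automatically complete, and the pairing in (ii) identifies the K\"othe dual.
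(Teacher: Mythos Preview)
Your overall architecture matches the paper's: build a dyadic level decomposition of $f$, assemble a dual $g$ from it, control $\norm{g}_{L^{p'}(\ell^{r'})}$ from above and $\int fg\,\omega$ from below, then read off (iii) and the Banach/K\"othe statements formally. But the decomposition you propose is \emph{not} the one the paper uses, and the difference is exactly where your sketch breaks.

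You take $A_n$ to be \emph{near-optimal covering sets} for the super level measure and then set $\tilde A_n=\bigcup_{m\ge n}A_m$. Two remarks. First, the sentence ``still satisfy $\mu(\tilde A_n)\lesssim\mu(\ell^r(f)>\lambda_n)$'' is false pointwise in $n$; what survives (and is enough for the upper bound on $\norm{g}$) is only the summed inequality $\sum_n\lambda_n^p\mu(\tilde A_n)\lesssim\sum_n\lambda_n^p\mu(A_n)$ via Fubini and a geometric series. Second, and more seriously, the claim
\[
\sum_x\omega(x)f_n(x)g_n(x)\;\sim\;\lambda_n\,\mu(\tilde A_n\setminus\tilde A_{n+1})
\]
is not justified. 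Classical $L^r$--$L^{r'}$ duality on the annulus gives you $\int f_n g_n\,\omega=\norm{f_n}_{L^r(\omega)}^r=\ell^r(f)(\tilde A_n\setminus\tilde A_{n+1})^r\,\mu(\tilde A_n\setminus\tilde A_{n+1})$, and the property $\norm{f1_{\tilde A_{n+1}^c}}_{L^\infty(\ell^r)}\le\lambda_{n+1}$ only yields the \emph{upper} bound $\ell^r(f)(\tilde A_n\setminus\tilde A_{n+1})\le 2\lambda_n$. There is no mechanism in your selection that forces a lower bound of order $\lambda_n$ on this size; a near-optimal cover can include regions where $f$ is small without violating near-optimality. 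Without that lower bound, the inequality $\int fg\,\omega\gtrsim\norm{f}_{L^p(\ell^r)}^p$ does not follow from your construction.

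The paper closes exactly this gap by a \emph{greedy} selection (Proposition~\ref{thm:atomic_decomposition_finite}): at each level $k$ one chooses $E_k$ so that $\ell^r(f1_{F_{k+1}^c})(E_k)>2^k$, i.e.\ the size lower bound is built into the definition of the decomposition, and additionally one shows $\mu(E_k)\le C\,\mu(\ell^r(f)>2^{k-1})$. With the dual function $g=\sum_k 2^{k(p-r)}f^{r-1}1_{E_k\setminus F_{k+1}}$, the lower bound $\int_{E_k\setminus F_{k+1}}fg\,\omega>2^{kp}\mu(E_k)$ is then immediate, and summing against the covering property $\mu(\ell^r(f)>2^l)\le\sum_{k\ge l}\mu(E_k)$ gives $\int fg\,\omega\gtrsim\norm{f}_{L^p(\ell^r)}^p$. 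Ironically, the step you flag as the ``main obstacle'' (the upper bound on $\norm{g}_{L^{p'}(\ell^{r'})}$) actually goes through with your $\tilde A_n$'s by the same geometric-series computation as in the paper; it is the pairing lower bound that needs the greedy choice. Your handling of (i), (iii), and the endpoint cases is fine once (ii) is in place.
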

The main point of the theorem is the uniformity of the constant in $(X,\mu,\omega)$. In fact, for every fixed finite setting, both statements in $(ii),(iii)$ are verified by a certain constant also for $p=1, 1 < r \leq \infty$ or $1<p < \infty, r=\infty$, and hence the final considerations of the theorem hold true as well. However, for $p=1, 1 < r \leq \infty$, the constant is not uniform in $(X,\mu,\omega)$, and we exhibit a counterexample in Lemma \ref{thm:no_uniformity_finite}. For $1<p < \infty, r=\infty$, the question about uniformity remains open. The uniformity of the constant suggests that if an infinite setting is suitably approximated by finite restrictions, the same results could possibly be obtained through a limiting process.

There is a slight abuse in the use of the term K\"{o}the dual space in the statement of Theorem \ref{thm:collapsing_Holder_triangular_finite}, since this object is in general defined for Banach function spaces. A \emph{Banach function space}, or \emph{K\"{o}the function space}, $(\mathcal{L}, \norm{\cdot}_{\mathcal{L}})$ on a $\sigma$-finite measure space $(X,\widetilde{\omega})$ is a Banach space of measurable functions containing all the simple functions and such that if $f$ is a measurable function with absolute value bounded $\widetilde{\omega}$-almost everywhere by $g \in \mathcal{L}$, then $f \in \mathcal{L}$ with norm bounded by that of $g$. The \emph{K\"{o}the dual space}, or \emph{associate space}, of $\mathcal{L}$ is then defined as the space of measurable functions such that the $L^1(X,\widetilde{\omega})$ pairing with every element of $\mathcal{L}$ is finite, endowed with the norm of the dual space, see for example \cite{MR928802,MR540367}. In our setting, we have both a measure associated with the weight $\omega$ and an outer measure $\mu$ on $X$. Although it is not clear whether a priori the simple functions with respect to $\omega$ belong to the outer $L^p(\ell^r)$ space, it is straight-forward to check that the simple functions with respect to $\mu$ belong to $L^p(\ell^r)$. Therefore, with a slight abuse of terminology, we extend the definition of the K\"{o}the duality to the outer $L^p(\ell^r)$ spaces with respect to the $L^1(X,\omega)$ pairing.

The first inequalities of both statements in $(i),(ii)$ were already proved as consequences of more general results obtained in \cite{MR3312633, Uraltsev}, see Proposition \ref{thm:ell^1_domination} and Proposition \ref{thm:Holder_inequality} in the Appendix of the present paper. It would be interesting to investigate whether, for example, the outer $L^p$ spaces are isomorphic to Banach spaces in the level of generality discussed in \cite{MR3312633} and recalled in the Appendix.

The outer $L^p$ spaces were introduced in \cite{MR3312633} with a specific infinite setting in mind. The purpose was to formalize a paradigm in proving the boundedness of modulation invariant operators in time-frequency analysis, when the underlying set is the Cartesian product of the upper half plane with the real line. The bound on the operator is obtained by a two-step program consisting of an outer H\"{o}lder inequality followed by estimates on certain embedding maps from classical to outer $L^p$ spaces. This is for example the case of the bilinear Hilbert transform in \cite{ 2019arXiv190906416A,  MR3829613, MR3312633}, the variational Carleson operator in \cite{MR3829751, 2016arXiv161007657U}, the variational bilinear iterated Fourier inversion operator in \cite{MR3596720}, a family of trilinear multiplier forms with singularity over a one-dimensional subspace in \cite{MR3873113}, and the uniform bilinear Hilbert transform in \cite{Warchalski}. Analogous applications of the outer $L^p$ spaces framework in other settings with different geometries can be found in \cite{2019arXiv190508681A}, \cite{MR3841536}, \cite{MR3875242}, \cite{MR3312633}, \cite{mirek2017local}, \cite{MR3406523}.

In fact, it was pointed out in \cite{MR3312633} that the same two-step program recovers some results of classical non-modulation invariant Calder\'{o}n-Zygmund theory, as detailed for example in \cite{MR0290095, MR1232192}, when the underlying set is the upper half space. In particular, let $X$ be $\R^d \times (0, \infty)$, $\mathcal{D}$ be the collection of open dyadic cubic boxes with sides parallel to the axes and base on $\R^d$, and $\sigma$ the classical volume of the base of the box. Let $\mu$ be the outer measure on $X$ associated with $(\mathcal{D}, \sigma)$ as in \eqref{eq:outer_measure_from_pre_measure}, and $\omega(y,t)$ be the weight $t^{-1}$, where $y \in \R^d, t \in (0,\infty)$. In this infinite setting, we prove the analogous statement of Theorem \ref{thm:collapsing_Holder_triangular_finite}. The properties $(ii),(iii)$ hold even in the endpoint case $p=1$.
\begin{theorem} \label{thm:collapsing_Holder_triangular_upper_half_space}
	Let $(X,\mu,\omega)$ be the upper half space setting just described, $0<p,r \leq \infty$. There exists a constant $C=C(p,r)$ such that the analogous properties stated in Theorem \ref{thm:collapsing_Holder_triangular_finite} hold true in the following range, property $(i)$ for $0 < p=r \leq \infty$, properties $(ii),(iii)$ for $1 \leq p , r \leq \infty$.
	
	Therefore, for $1 \leq p,r \leq \infty$, the outer $L^p(\ell^r)$ quasi-norm is equivalent to a norm, the outer $L^p(\ell^r)$ space is isomorphic to a Banach space, and it is the K\"{o}the dual space of the outer $L^{p'}(\ell^{r'})$ space.
\end{theorem}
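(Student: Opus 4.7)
The plan is to adapt the proof of Theorem~\ref{thm:collapsing_Holder_triangular_finite} but use the dyadic geometry of the upper half space to close the endpoint $p=1$ that fails in the finite setting. The collapsing statement~(i) at $p=r$ has one direction already recorded in Proposition~\ref{thm:ell^1_domination}; the reverse bound in the upper half space follows by a direct layer-cake computation, localizing the super level sets of $f$ to dyadic boxes. Since the right hand side of the converse H\"older inequality~(ii) is by construction subadditive, property~(iii) follows from~(ii) with the same constant, so the real content is~(ii).

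For~(ii) I would run a \emph{greedy selection} on dyadic boxes. At each dyadic level $\lambda_n = 2^n$, iteratively pick a maximal family $\{B(Q_n^j)\}_j$ of dyadic boxes on which the $\ell^r$ size of $f$ exceeds $\lambda_n$. Setting $F_n = \bigcup_j B(Q_n^j)$, the outer measure $\mu(F_n)$ is comparable to $\mu(\ell^r(f) > \lambda_n)$ by design. On each $B(Q_n^j)$, the sharpness of the classical $\ell^r \leftrightarrow \ell^{r'}$ H\"older duality supplies a local dual weight $g_n^j$ supported in $B(Q_n^j)$ with $\ell^{r'}(g_n^j)(B(Q_n^j)) \leq 1$ which realizes the local pairing against $f$. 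A weighted sum $g = \sum_{n,j} c_n^j g_n^j$, with coefficients $c_n^j$ calibrated by the layer-cake formula for $\norm{f}_{L^p(\ell^r)}^p$, then produces a candidate dual function for which $\int f g \, d\omega$ is comparable to $\norm{f}_{L^p(\ell^r)}$.

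The main obstacle, and the reason the counterexample in Lemma~\ref{thm:no_uniformity_finite} is bypassed, is controlling $\norm{g}_{L^{p'}(\ell^{r'})}$, especially at the endpoint $p=1$, where one needs a uniform pointwise bound on $\ell^{r'}(g)(B(Q))$ for every dyadic box $B(Q)$. This is where the geometry of the upper half space genuinely enters: the nested-or-disjoint dichotomy for dyadic cubes, combined with a Carleson packing estimate inherited from the maximality of the greedy selection, forces the contributions of the $g_n^j$ inside any fixed $B(Q)$ to sum geometrically across levels. This replaces the packing estimates that cannot be arranged uniformly in an abstract finite setting. The delicate step is calibrating the coefficients $c_n^j$ so that the pairing lower bound and the Carleson summation for $\ell^{r'}(g)$ close with the same constant, uniformly down to $p=1$.

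Once~(ii) and~(iii) are established in this quantitative form, the K\"othe duality and the Banach space isomorphism follow from the same abstract functional analytic reasoning used after Theorem~\ref{thm:collapsing_Holder_triangular_finite}, completed by the observation that simple functions with respect to $\mu$ are dense enough in $L^p(\ell^r)$ in the upper half space for the duality to be realized on all of $L^p(\ell^r)$.
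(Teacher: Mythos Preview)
Your outline matches the paper's approach closely: a greedy selection of dyadic boxes across dyadic levels (Proposition~\ref{thm:atomic_decomposition_upperspace_dyadic}), an explicit dual function $g$ built as a weighted sum over the selected pieces, and the recognition that the endpoint $p=1$ is rescued precisely by a Carleson packing bound on the selected family (this is the content of Remark~\ref{rmk:gain_of_the_endpoint} and Case~I of Lemma~\ref{thm:upper_bound_norm_realizing_function_upperspace_dyadic}). Deriving~(iii) from~(ii) and invoking Proposition~\ref{thm:ell^1_domination} for one half of~(i) are also what the paper does.

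The one substantive gap is your claim that the Carleson packing is ``inherited from the maximality of the greedy selection''. Maximality of $\sigma(E_{k,n})$ at a fixed level $k$ forces the boxes $\{E_{k,n}\}_n$ to be pairwise disjoint \emph{within} that level, but it does not by itself control overlap \emph{across} levels: nothing in a plain greedy algorithm prevents $B(E_{k,n})$ from being entirely covered by $\bigcup_{l>k,\,m} B(E_{l,m})$, which would destroy sparseness and leave you with the empty bound flagged in Remark~\ref{rmk:gain_of_the_endpoint}. The paper inserts an extra saturation step: before selecting at level $k$, one enlarges the already-removed set $F_{k+1}$ to $F_{k,0}$ by adjoining every maximal dyadic box whose base is at least half covered by previously selected bases, condition~\eqref{eq:doubling_condition_upperspace_dyadic}. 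Only with this step does the family $\{B(E_{k,n})\}$ become $1/2$-sparse and hence $2$-Carleson as in~\eqref{eq:sparse_collection_upperspace_dyadic}, which is exactly what closes the estimate~\eqref{eq:size_norm_realizing_function_upperspace_dyadic} at $p=1$. Your architecture is right, but this saturation step is the missing ingredient.
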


In the upper half space setting there are already classical spaces with a different iterated $L^pL^r$ structure, namely the tent spaces introduced in \cite{MR729344, MR791851}, which have been thoroughly studied and used in the literature. Let $\Gamma(x)$ be the cone with vertex in $x \in \R^d$, $T(x,s)$ be the tent over the ball in $\R^d$ centred in $x$ with radius $s$,
\begin{align*}
	\Gamma(x)& = \{ (y,t) \in \R^d \times (0,\infty) \colon \abs{x-y} < t \},\\
	T(x,s)& = \{ (y,t) \in \R^d \times (0,\infty) \colon \abs{x-y} < s-t \}.
\end{align*}
For $0 < p < \infty, 0 < r \leq \infty$, let
\begin{equation} \label{eq:tent_space_1}
	\begin{split}
A_r(f)(x) &= \norm{f}_{L^r(\Gamma(x),\diff y \frac{\diff t}{t^{d+1}})}, \\
\norm{f}_{T^p_r} &= \norm{A_r (f)}_{L^p(\R^d,\diff x)}.
\end{split}
\end{equation} 
For $p=\infty, 0 < r \leq \infty$, let
\begin{equation} \label{eq:tent_space_2}
	\begin{split}
C_r(f)(x) &= \sup_{s \in (0, \infty)} \norm{f}_{L^r(T(x,s), \omega)}, \\
\norm{f}_{T^\infty_r} &= \norm{C_r (f)}_{L^\infty(\R^d,\diff x)}.
\end{split}
\end{equation}
For $0 < p , r \leq \infty$, the tent space $T^p_r$ is defined by the $T^p_r$ quasi-norm. Sometimes in the literature an additional continuity condition is assumed on functions in $T^p_\infty$, see for example \cite{MR791851}, but we do not, in order to preserve a uniformity in the definition of the spaces. For $1 \leq p,r \leq \infty$, the quasi-norms defined in the last two displays are in fact norms.

In \cite{MR3312633}, it was noted that in the upper half space setting, outer $L^p$ spaces can be used in the same spirit of tent spaces in order to prove classical estimates for paraproducts and $T(1)$ theorems. The third result of this paper is to establish the equivalence between the outer $L^p(\ell^r)$ spaces and the tent spaces $T^p_r$ conjectured since the publication of \cite{MR3312633} but never formally established.
\begin{theorem} \label{thm:equivalence_tent_outer_spaces}
	For $0 < p , r \leq \infty$, there exists a constant $C=C(p,r)$ such that, for every $f \in L^p(\ell^r)$,
	\begin{equation*}
		\frac{1}{C} \norm{f}_{T^p_r} \leq \norm{f}_{L^p(\ell^r)} \leq C  \norm{f}_{T^p_r}. 
	\end{equation*}
	Moreover, we have $L^p(\ell^r) = T^p_r$.
\end{theorem}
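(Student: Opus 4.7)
The plan is to reduce the norm equivalence to a comparison of super-level set quantities, handling the diagonal case $p=r$ and the endpoint $p=\infty$ separately. For $p=r$, a direct Fubini computation expresses $\norm{f}_{T^r_r}^r$ as $\int f^r \abs{B(y,t)}/t^{d+1} \diff y \diff t$, which up to constants coincides with $\norm{f}_{L^r(X, \omega)}^r$; combined with Theorem~\ref{thm:collapsing_Holder_triangular_upper_half_space}$(i)$, this yields $\norm{f}_{T^r_r} \sim \norm{f}_{L^r(\ell^r)}$. For the endpoint $p=\infty$, both norms are suprema of $L^r$-averages of $f$ over a family of sets, and the comparison reduces to the geometric fact that tents over balls and dyadic boxes (the pre-measure generators for $\mu$) are mutually comparable, each being contained in a bounded number of the other of comparable diameter.

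The main case is $0 < p < \infty$ with $p \neq r$. Here I use the layer-cake representations
\begin{align*}
\norm{f}_{L^p(\ell^r)}^p &= \int_0^\infty p\lambda^{p-1}\mu(\ell^r(f) > \lambda)\, \diff \lambda, \\
\norm{f}_{T^p_r}^p &= \int_0^\infty p\lambda^{p-1} \abs{\{x \in \R^d : A_r(f)(x) > \lambda\}}\, \diff \lambda,
\end{align*}
and reduce to a two-sided comparison $\mu(\ell^r(f) > C\lambda) \sim \abs{\{A_r(f) > \lambda\}}$ uniform in $\lambda > 0$. For the bound $\mu(\ell^r(f) > C\lambda) \lesssim \abs{\{A_r(f) > \lambda\}}$, letting $E_\lambda = \{A_r(f) > \lambda\}$, I take the Hardy--Littlewood enlargement $E_\lambda^* = \{M 1_{E_\lambda} > 1/2\}$ (of measure $\lesssim \abs{E_\lambda}$), decompose it into maximal dyadic cubes $\{Q_j\}$, and set $A = \bigcup_j Q_j \times (0, \ell(Q_j)]$, so that $\mu(A) \lesssim \abs{E_\lambda}$. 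The key step is to show $\ell^r(f 1_{A^c}, D) \lesssim \lambda$ for every dyadic box $D = Q_D \times (0, \ell(Q_D)]$: by the maximality of the $Q_j$, the base $Q_D$ satisfies $\abs{Q_D \cap (E_\lambda^*)^c} \gtrsim \abs{Q_D}$, and a Fubini tent-to-cone swap gives
\begin{equation*}
\int_{D \setminus A} f(y,t)^r \omega(y,t) \diff y \diff t \lesssim \int_{Q_D \cap (E_\lambda^*)^c} A_r(f)(x)^r \diff x \lesssim \lambda^r \abs{Q_D}.
\end{equation*}
The reverse direction proceeds symmetrically: covering a near-optimal $A$ by dyadic boxes $\{D_j\}$ with total shadow $\sum \abs{Q_{D_j}} \lesssim \mu(\ell^r(f) > \lambda)$ and dilating the shadows by a fixed factor yields a set $E \subset \R^d$ of comparable measure such that, for $x \notin E$, the cone $\Gamma(x)$ is sufficiently separated from $A$ to give $A_r(f)(x) \lesssim \lambda$ after an analogous Fubini argument.

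The main obstacle will be the careful dyadic and tent-cone bookkeeping linking the un-normalized cone-averaged quantity $A_r(f)$ with the normalized outer-measure size $\ell^r(f, \cdot)$: specifically, verifying that the complement of the Whitney tent-over-cover correctly dominates the $\ell^r$-average over every generating dyadic box, not only the Whitney ones themselves, and in the reverse direction that a sufficient dilation of the covering bases removes the cone contribution from the outer-measure witness $A$. Once this super-level-set equivalence is in place, layer-cake integration and the equality of function spaces $L^p(\ell^r) = T^p_r$ follow at once.
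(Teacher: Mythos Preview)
Your approach differs substantially from the paper's: the paper proves the equivalence directly only in the edge cases $p=\infty$ (trivial) and $r=\infty$ (super-level comparison via Whitney, exactly as you describe), and then for general $(p,r)$ \emph{reduces to those cases by duality}, combining Theorem~\ref{thm:collapsing_Holder_triangular_upper_half_space}(ii) for $L^p(\ell^r)$ with the known K\"othe duality for tent spaces. You instead try to push the super-level comparison $\mu(\ell^r(f)>C\lambda)\sim\abs{\{A_r(f)>\lambda\}}$ through for every $r$.

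The forward inequality $\mu(\ell^r(f)>C\lambda)\lesssim\abs{\{A_r(f)>\lambda\}}$ is essentially right, though your Fubini swap naturally produces a wider cone $\Gamma_C(x)$, so you will need the change-of-aperture lemma for tent spaces, which you do not mention. The genuine gap is in the \emph{reverse direction}. Your claim that for $x\notin E$ one has $A_r(f)(x)\lesssim\lambda$ is false when $r<\infty$: knowing $\norm{f1_{A^c}}_{L^\infty(\ell^r)}\le\lambda$ is a Carleson-type condition on the measure $f^r\,\omega\,\diff y\,\diff t$, and such a condition does \emph{not} give pointwise control on the cone integral $A_r$. Concretely, take $g$ supported in the unit dyadic box with $g(y,t)^r\omega(y,t)=2^{-k}$ on $Q_k\times(2^{k-1},2^k]$, where $Q_k$ is the dyadic cube of side $2^k$ containing the origin, $k\le 0$. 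Then $\ell^r(g)(D)\lesssim 1$ for every dyadic box $D$, so $\norm{g}_{L^\infty(\ell^r)}\lesssim 1$ and $\mu(\ell^r(g)>\lambda)=0$ for $\lambda$ large; yet $A_r(g)(x)^r\sim\log(1/\abs{x})$ as $x\to 0$, so $\abs{\{A_r(g)>\lambda\}}>0$ for every $\lambda$. Thus the level-set inequality $\abs{\{A_r(f)>C\lambda\}}\lesssim\mu(\ell^r(f)>\lambda)$ that your symmetric argument would need simply does not hold for $r<\infty$. This is precisely why the paper abandons the direct route once $r<\infty$ and passes through duality instead.
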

It is worth noting that while the tent spaces require to pass from cones to tents in order to define $T^\infty_r$, the definition of the outer $L^p(\ell^r)$ spaces always relies on the boxes, or equivalently on the tents.

In the second part of the paper, we turn our focus to embedding maps of functions on $\R^d$ to the upper half space $\R^d\times (0, \infty)$. These embeddings are obtained by pairing a function on $\R^d$ with translated and dilated versions of a given test function. More precisely, given a test function $\phi$ satisfying certain boundedness and decay properties, we define, for every locally integrable function $f$ on $\R^d$, the embedded function $F_\phi(f)$ on $\R^d \times (0 ,\infty)$ by
\begin{equation} \label{eq:embedding_map}
	F_\phi(f)(y,t) = \int_{\R^d} f(x) t^{-d} \phi(t^{-1}(y-x)) \diff x.
\end{equation}
A prominent example of such an embedding is the harmonic extension of a function on $\R^d$ to the upper half space, where $\phi$ is the Poisson kernel. The interest in embedding maps is part of the aforementioned two-step program to prove the boundedness of operators in Calder\'{o}n-Zygmund theory.

We study continuous inclusions between outer $L^p$ spaces in the upper half space and continuous embeddings from classical $L^p$ spaces on $\R^d$ to outer $L^p$ spaces in this setting. We start with an improvement over a previous result on Hardy-Littlewood-Sobolev inclusions between tent spaces in \cite{MR3750310}. We obtain the boundedness of the map
\begin{equation*}
T^p_{r_1} \hookrightarrow T^q_{r_2}, f \mapsto t^{\frac{d}{p}-\frac{d}{q}} f,
\end{equation*} 
for $0 < p < q \leq \infty, 0 < r_2 \leq r_1 \leq \infty$, or equivalently the same statement for outer $L^p(\ell^r)$ spaces. The improvement over the result in \cite{MR3750310} consists of allowing for $r_1$ to be strictly greater than $r_2$. 

These inclusions allow to recover strong type $(p,q)$ estimates for the embedding maps with a fractional scale factor
\begin{equation*}
	L^p(\R^d) \hookrightarrow L^q(\ell^r), f \mapsto t^{\frac{d}{p}-\frac{d}{q}} F_\phi(f),
\end{equation*}
for $0 < p < q \leq \infty, 0 < r \leq \infty$ from the ones for $p=q, r=\infty$. The fourth result of the paper is then the full classification of all positive and negative results regarding strong and weak type estimates for a family of embedding maps with a fractional scale factor in Theorem \ref{thm:embedding}. More precisely, for $\varepsilon > 0,f \in \mathcal{S}(\R^d)$, let the embedded function $F_\varepsilon(f)= F(f)$ be defined by
\begin{equation*} \label{eq:embedding_family_map}
F(f)(y,t) = \sup_{\phi} F_\phi(f)(y,t),
\end{equation*}
where the supremum is taken over the set of functions $\phi$ such that 
\begin{equation} \label{eq:decay_condition}
\abs{\phi(z)} \leq (1+\abs{z})^{-d-\varepsilon}.
\end{equation}

With respect to the strong type estimates, we extract the following statement from Theorem \ref{thm:embedding}.
\begin{theorem}
	Let 
	\begin{equation} \label{eq:zerocase_intro}
	1  \leq p,q \leq \infty,  0< r \leq \infty.
	\end{equation}
	Then, for $(p,q,r)$ satisfying one of the following conditions
	\begin{equation} \label{eq:case_intro} 
	\begin{gathered}
	1 < p < q \leq \infty, 0 < r \leq \infty, \\
	1 < p = q \leq \infty, r = \infty, \\
	p = 1, q = \infty, 0<r\leq \infty,
	\end{gathered}
	\end{equation}
	there exists a constant $C=C(p,q,r,d,\varepsilon)$ such that, for every $f \in L^p(\R^d)$,
	\begin{equation*}
	\norm{t^{\frac{d}{p}-\frac{d}{q}} F (f)}_{L^{q}(\ell^r)} \leq C \norm{f}_{L^p(\R^d)}.
	\end{equation*}
	For all the triples $(p,q,r)$ satisfying \eqref{eq:zerocase_intro} but none of the conditions in \eqref{eq:case_intro}, no strong type $(p,q)$ estimate holds. 
\end{theorem}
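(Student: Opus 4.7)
The plan is to split the theorem into its positive part (sufficient conditions) and its negative part (sharpness), and to reduce all of the positive cases to a single diagonal estimate that sits on top of the Hardy--Littlewood maximal inequality.

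\textbf{Positive direction.} For $1 < p \leq \infty$, I would first establish the diagonal bound
\begin{equation*}
\norm{F(f)}_{L^p(\ell^\infty)} \leq C \norm{f}_{L^p(\R^d)}.
\end{equation*}
The decay condition \eqref{eq:decay_condition} yields the pointwise majorization $F(f)(y,t) \leq C(d,\varepsilon) Mf(y)$, with $M$ the Hardy--Littlewood maximal operator. Unwinding the outer $L^p(\ell^\infty)$ quasi-norm, the super level measure at height $\lambda$ is dominated by the outer measure of the tent above $\{ Mf > c\lambda \}$, which is comparable to the Lebesgue measure of that level set because the outer measure is generated by dyadic boxes with base area as pre-measure. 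The classical Hardy--Littlewood inequality then closes the loop. The endpoint $p = 1$, $q = \infty$ follows from the trivial pointwise bound $t^d F(f)(y,t) \leq \norm{f}_{L^1}$ provided by \eqref{eq:decay_condition}, together with $L^\infty(X,\omega) \hookrightarrow L^\infty(\ell^r)$. Finally, for $1 < p < q \leq \infty$ and $0 < r \leq \infty$, I compose the diagonal bound with the Hardy--Littlewood--Sobolev inclusion $g \mapsto t^{d/p-d/q} g \colon L^p(\ell^\infty) \to L^q(\ell^r)$ recalled from the introduction.

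\textbf{Negative direction.} I produce a counterexample for each triple $(p,q,r)$ in the range $1 \leq p,q \leq \infty$, $0 < r \leq \infty$ not satisfying \eqref{eq:case_intro}, by Schwartz approximation of explicit test functions. The workhorse is $f = 1_{B(0,1)}$, for which $F(f)(y,t) \approx 1$ on the tent over $B(0,1)$ and $F(f)(y,t) \approx t^{-d}$ on the cones $\abs{y} \lesssim t$ at large scales. Translating the outer $L^q(\ell^r)$ quasi-norm of $t^{d/p-d/q} F(f)$ into a tent-space norm via Theorem~\ref{thm:equivalence_tent_outer_spaces}, this single test function rules out: $p > q$ (the negative exponent in $t^{d/p-d/q}$ makes the cone integral or sup diverge as $t \to 0^+$ for $x$ inside the support); $1 < p = q \leq \infty$ with $r < \infty$ (logarithmic divergence $\int_0^1 t^{-1}\,\diff t$ in the $\ell^r$ average or the $A_r$ functional over the tent); and $p = 1$, $1 \leq q < \infty$ (slow $\abs{x}^{-d/q}$ decay of $A_r(t^{d-d/q} F(f))(x)$ at spatial infinity, which fails to be in $L^q$). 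For the remaining $p = q = 1$ subcase, Schwartz approximations of the Dirac mass reproduce the classical $L^1$ unboundedness of $M$, which is pointwise dominated by $F$ up to constants.

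\textbf{Main obstacle.} The positive direction is essentially a bookkeeping recombination of the maximal inequality with the HLS inclusion already proved in the paper. The substantive step is the sharpness: because the chosen exponent $d/p - d/q$ makes both sides of the estimate homogeneous under a simultaneous dilation of $f$ and $(y,t)$, no one-parameter scaling can exclude any regime, and each excluded triple demands a separate failure mode, whether small-$t$ integrability, large-$\abs{x}$ decay, or the $L^1$ pathology of $M$. The tent space equivalence $L^q(\ell^r) = T^q_r$ is indispensable here because it translates the outer mixed norm into explicit cone integrals over $\R^d$ in which each obstruction can be read off in closed form.
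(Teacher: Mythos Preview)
Your handling of the diagonal case $1<p=q\le\infty$, $r=\infty$ via the maximal function and of the off-diagonal range $1<p<q\le\infty$ via the Hardy--Littlewood--Sobolev inclusion is exactly what the paper does. The negative direction via the tent space equivalence is a legitimate alternative to the paper's direct outer-measure computations; the paper works straight from the definition of the super level measure on boxes, which is more self-contained, but your route through $T^q_r$ and the explicit cone integrals for $f=1_{B(0,1)}$ identifies the same three obstructions (non-integrability at $t\to0^+$ for $p\ge q$ with $r<\infty$ or $p>q$ with $r=\infty$, and non-decay at spatial infinity for $p=1$, $q<\infty$).

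There is, however, a genuine gap in the positive endpoint $p=1$, $q=\infty$, $0<r<\infty$. The inclusion $L^\infty(X,\omega)\hookrightarrow L^\infty(\ell^r)$ you invoke is \emph{false} in the upper half space: for any box $E=(0,s)^{d+1}$ and any constant $c>0$,
\[
\ell^r(c)(E)^r=\sigma(E)^{-1}\int_E c^r\,\frac{\diff y\,\diff t}{t}=c^r\int_0^s\frac{\diff t}{t}=\infty,
\]
because the weight $t^{-1}$ is not integrable down to $t=0$. So the pointwise bound $t^d F(f)\le C\norm{f}_{L^1}$ alone gives nothing for $r<\infty$. What is needed is the extra smallness of $t^d F(f)(y,t)$ as $t\to0$ coming from the \emph{localization} of the convolution: only the mass of $f$ in a ball of radius $\sim t$ around $y$ contributes. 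The paper exploits this by first reducing (via the quasi-triangle inequality for $L^\infty(\ell^r)$ and the decay of $\phi$) to $\phi=1_{(-1,1)^d}$ and $f$ supported in a cube, and then computing $\ell^1(t^d F_\phi(f))(E)$ directly with a Fubini swap: the inner integral $\int_{(-u,u)^d}1_{y+(-t,t)^d}(z)\,\diff y$ produces a factor $t^d$, which makes $\int_0^{2u} t^d\,t^{-1}\,\diff t$ convergent. The case $1<r<\infty$ follows by log-convexity from $r=1$ and $r=\infty$, and $0<r<1$ by a H\"older trick. Without this mechanism your argument for $p=1$, $q=\infty$, $r<\infty$ does not go through.
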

It is worth noting that the strong type $(1, \infty)$ estimates hold for $0< r \leq \infty$, even if  for $r=\infty$ only the weak type $(1,1)$ estimate holds. Moreover, in the endpoint $p=q=1, r=\infty$, we prove in Proposition \ref{thm:embedding_Hardy_space} a substitute of the strong type $(1,1)$ estimate, namely the boundedness of the embedding map
\begin{equation*}
	H^1(\R^d) \hookrightarrow L^1(\ell^\infty), f \mapsto F_\varphi(f),
\end{equation*}
for $\varphi \in \mathcal{S}(\R^d)$. 

We conclude the paper with some applications of these embedding theorems yielding alternative proofs of classical results such as the Hardy-Littlewood-Sobolev inequality, and the Gagliardo-Nirenberg-Sobolev inequality up to the endpoint in the spirit of the aforementioned two-step program.  

\subsection*{Guide to the paper}
In Section 2 we state and prove two crucial preparatory decomposition results for functions in the outer $L^p(\ell^r)$ spaces in both finite and upper half space settings. 
We use them to prove Theorem \ref{thm:collapsing_Holder_triangular_upper_half_space} and Theorem \ref{thm:collapsing_Holder_triangular_finite} in Section 3. Moreover, in Lemma \ref{thm:no_uniformity_finite}, we provide a counterexample to the uniformity of the statements in $(ii),(iii)$ in Theorem \ref{thm:collapsing_Holder_triangular_finite} for $p=1, 1 < r \leq \infty$. 
In Section 4 we prove Theorem \ref{thm:equivalence_tent_outer_spaces}.
In Section 5, Theorem \ref{thm:inclusion_tent_spaces}, we improve over the result of Amenta on Hardy-Littlewood-Sobolev inclusions between tent spaces.
In Section 6, Theorem \ref{thm:embedding}, we prove a full classification of all positive and negative results regarding strong and weak type estimates for a family of embedding maps with a fractional scale factor from classical $L^p$ spaces on $\R^d$ to outer $L^p(\ell^r)$ spaces. Moreover, in Proposition \ref{thm:embedding_Hardy_space} we prove the boundedness of the embedding map defined by a test function $\varphi \in \mathcal{S}(\R^d)$ from $H^1(\R^d)$ to the outer $L^1(\ell^\infty)$ space.
We use the strong type estimates from both results to prove the Hardy-Littlewood-Sobolev inequality, and the Gagliardo-Nirenberg-Sobolev inequality up to the endpoint in the spirit of the aforementioned two-step program in Section 7.
Finally, in Section 8, the Appendix, we review the definitions and recall some results of the theory of outer $L^p$ spaces in the level of generality discussed in \cite{MR3312633}.

\section*{Acknowledgements}
The author gratefully acknowledges financial support by the CRC 1060 \emph{The Mathematics of Emergent Effects} at the University of Bonn, funded through the Deutsche Forschungsgemeinschaft. The author is thankful to Alex Amenta, Christoph Thiele and Gennady Uraltsev for helpful comments, suggestions and corrections that improved the exposition of the material, and for their support.

\section{Decompositions for outer $L^p(\ell^r)$ spaces}
In this section we state and prove two crucial preparatory decomposition results for functions in the outer $L^p(\ell^r)$ spaces in both finite and upper half space settings, used in proving Theorem \ref{thm:collapsing_Holder_triangular_finite} and Theorem \ref{thm:collapsing_Holder_triangular_upper_half_space}, respectively. Both consist of a recursive greedy selection algorithm that provides a sequence of maximal disjoint subsets of $X$ exhausting the elements of $\mathcal{P}(X)$ where the quantity defined in \eqref{eq:size} is in the interval $[2^k,2^{k+1}), k \in \Z$. This property guarantees not only an upper bound but also a lower bound on the super level measure in \eqref{eq:super_level} at level $\lambda=2^k,k \in \Z$, in terms of the outer measures of the selected subsets, thus providing a concrete substitute for it. Without loss of generality, we can restrict our attention only to these levels. In fact, we can replace the integral in \eqref{eq:outer_L_p_quasi_norm} with an equivalent discrete version, namely
\begin{equation*}
	(\sum_{k \in \Z} 2^{kp} \mu(\ell^r(f) > 2^k) )^{\frac{1}{p}},
\end{equation*}   
due to the monotonicity in $\lambda$ of the super level measure of a fixed function. This quantity is no longer homogeneous in $f$, hence it is not a quasi-norm, but the discrete levels fit better the recursive process we want to describe.

In the finite setting we do not have to worry about the well-definedness of the selection process, since at each step only finitely many choices are available. Again, we stress that the main point in this case is the uniformity of constants in $(X,\mu,\omega)$. On the contrary, in the upper half space setting we need to be slightly more careful in the definition of the recursive algorithm, accounting for the non-finiteness of the selection process. On the other hand, due to the geometry of the outer measure space, we can get an improved version of the decomposition result. First, we can extend it to the case $r=\infty$, which is not included in the finite setting. Second, the decomposition of a function in the outer $L^1(\ell^r)$ space, for $1< r \leq \infty$, is subtly more efficient for our purpose, as will be clarified in Remark \ref{rmk:gain_of_the_endpoint}.

We start with the finite setting. Let $X$ be a finite set, $\mu$ an outer measure, $\omega$ a strictly positive weight. Without loss of generality, we can assume that $\mu$ attains a strictly positive finite value on every singleton in $\mathcal{P}(X)$. If there exists $x \in X$ such that $\mu(\{x\}) \in \{0,\infty\}$, then we can subtract $x$ from the set, as no nontrivial contribution to the outer $L^p(\ell^r)$ spaces comes from these singleton. In fact, in the first case the outer $L^p(\ell^r)$ quasi-norm of a function $f$ is the same as that of $f 1_{X \setminus \{x\}}$, and in the second the quasi-norm is infinite as soon as $f(x)$ is different from $0$. As a consequence, $\mu(X)$ is finite.

We have the following uniform decomposition result for functions in the outer $L^p(\ell^r)$ spaces defined by \eqref{eq:outer_L_p_quasi_norm}.
\begin{proposition} \label{thm:atomic_decomposition_finite}
	Let $0 < p ,r < \infty$. There exists a constant $C=C(p,r)$ such that, for every finite set $X$, outer measure $\mu$, and strictly positive weight $\omega$, the following property holds. For $f \in L^p(\ell^r)$, there exists a sequence of sets $\{ E_k \colon k \in \Z \} \subseteq \mathcal{P}(X)$ such that if
	\begin{equation*}
		F_{k} = \bigcup_{ l \geq k } E_{l},
	\end{equation*}
	then, for every $k \in \Z$,
	\begin{gather} 
		\label{eq:superlevel_finite}
		\ell^r (f 1_{F_{k+1}^c})(E_k) > 2^k, \ \ \ \ \text{when $E_k \neq \emptyset$,} \\
		\label{eq:maximal_choice_finite}
		\norm{f 1_{F_k^c}}_{L^\infty(\ell^r)} \leq 2^k, \\
		\label{eq:covering_finite}
		\mu(\ell^r (f) > 2^k) \leq \sum_{ l \geq k } \mu(E_l), \\
		\label{eq:optimal_covering_finite}
		\mu (E_k) \leq C \mu(\ell^r (f) > 2^{k-1}).
	\end{gather}
\end{proposition}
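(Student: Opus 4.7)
The plan is to construct $\{E_k\}_{k \in \Z}$ by a recursive greedy selection run from large $k$ downward. Since $X$ is finite and $f$ bounded, there is a largest $K$ for which some subset has $\ell^r$-size exceeding $2^K$; I set $E_k := \emptyset$ for $k > K$, so $F_{K+1} = \emptyset$. For each $k \leq K$ in decreasing order, with $F_{k+1} = \bigcup_{l > k} E_l$ already defined, I greedily select pairwise disjoint subsets $A_1, A_2, \ldots$ of $F_{k+1}^c$ satisfying $\ell^r(f)(A_i) > 2^k$, continuing as long as any such $A_i$ disjoint from the previous selections exists. Termination at each level is guaranteed by the disjointness of the $A_i$'s and the finiteness of $X$. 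I then set $E_k := \bigcup_i A_i$ and $F_k := F_{k+1} \cup E_k$. No particular optimality is required in the greedy choice; any valid selection will do.

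Properties \eqref{eq:superlevel_finite}, \eqref{eq:maximal_choice_finite}, and \eqref{eq:covering_finite} follow directly. For \eqref{eq:maximal_choice_finite}, any hypothetical $B$ with $\ell^r(f 1_{F_k^c})(B) > 2^k$ would, after passing to $B \cap F_k^c$ and using monotonicity of $\mu$, yield a subset of $F_k^c = F_{k+1}^c \setminus E_k$ with $\ell^r(f) > 2^k$, contradicting the stopping condition at level $k$. For \eqref{eq:covering_finite}, $F_k$ is then a legal competitor in the infimum defining $\mu(\ell^r(f) > 2^k)$, so subadditivity of $\mu$ gives $\mu(\ell^r(f) > 2^k) \leq \mu(F_k) \leq \sum_{l \geq k} \mu(E_l)$. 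For \eqref{eq:superlevel_finite}, using $E_k \subseteq F_{k+1}^c$, the disjointness of the $A_i$'s, and $\ell^r(f)(A_i) > 2^k$,
\begin{equation*}
\ell^r(f 1_{F_{k+1}^c})(E_k)^r \, \mu(E_k) = \sum_i \sum_{x \in A_i} \omega(x) f(x)^r > 2^{kr} \sum_i \mu(A_i) \geq 2^{kr} \, \mu(E_k).
\end{equation*}

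The main step is \eqref{eq:optimal_covering_finite}. Fix $k$ and let $A^*$ attain the infimum defining $\mu(\ell^r(f) > 2^{k-1})$, which is achieved since $\mathcal{P}(X)$ is finite; then $\ell^r(f 1_{(A^*)^c})(A_i) \leq 2^{k-1}$ for every $i$. Since the supports of $f 1_{A^*}$ and $f 1_{(A^*)^c}$ are disjoint, the $r$-th powers add, giving the Pythagorean-type identity
\begin{equation*}
\ell^r(f)(A_i)^r = \ell^r(f 1_{A^*})(A_i)^r + \ell^r(f 1_{(A^*)^c})(A_i)^r,
\end{equation*}
which combined with $\ell^r(f)(A_i) > 2^k$ produces $\sum_{x \in A_i \cap A^*} \omega(x) f(x)^r > (2^r - 1) 2^{(k-1)r} \mu(A_i)$. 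Summing over the disjoint $A_i$'s and applying \eqref{eq:maximal_choice_finite} at level $k+1$, already in hand from the downward induction, to the set $A^*$,
\begin{equation*}
(2^r - 1) 2^{(k-1)r} \, \mu(E_k) \leq \sum_{x \in E_k \cap A^*} \omega(x) f(x)^r \leq \sum_{x \in F_{k+1}^c \cap A^*} \omega(x) f(x)^r \leq 2^{(k+1)r} \, \mu(A^*),
\end{equation*}
which is \eqref{eq:optimal_covering_finite} with $C = 2^{2r}/(2^r - 1)$.

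The main obstacle is precisely this last step: bounding $\mu(E_k)$ in terms of the weighted sum over $E_k \cap A^*$ requires the upper bound \eqref{eq:maximal_choice_finite} at the next higher level as input, which is what forces the recursion to be run from large $k$ downward. Once this inductive order is respected, the remaining argument is careful bookkeeping with the monotonicity and subadditivity of the outer measure $\mu$ and the additivity of the $r$-th powers over disjoint supports, and the uniformity of $C$ in $(X, \mu, \omega)$ is automatic since the constant only depends on $r$.
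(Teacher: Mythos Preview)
Your proof is correct and follows essentially the same approach as the paper: a backward-in-$k$ greedy selection, with \eqref{eq:optimal_covering_finite} deduced from \eqref{eq:maximal_choice_finite} at level $k+1$ together with the defining property of a minimizer for the super level measure at level $2^{k-1}$. The only cosmetic difference is that the paper builds $E_k$ by repeatedly enlarging a single set $A$ (using subadditivity of $\mu$ to preserve $\ell^r(f 1_{F_{k+1}^c})(A)>2^k$ for the union), while you collect pairwise disjoint pieces $A_i\subseteq F_{k+1}^c$ and take their union; the resulting constant $C=2^{2r}/(2^r-1)$ is the same in both arguments.
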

\begin{proof}
	First, we observe qualitatively that by outer H\"{o}lder's inequality, Proposition \ref{thm:Holder_inequality}, we have $L^p(\ell^r) \subseteq L^\infty(\ell^r)$, because $\mu(X)$ is finite.
	
	We define $E_k$ by backward recursion on $k \in \Z$. For $k$ large enough such that
	\begin{equation*}
		\norm{f}_{L^\infty(\ell^r)} \leq 2^k,
	\end{equation*}
	we set $E_k$ to be empty. Now fix $k$ and assume we have selected $E_l$ for $l > k$. In particular, $F_{k+1}$ is already well-defined. If there exists a set $A \subseteq X$ such that
	\begin{equation} \label{eq:selection_condition}
		\ell^r(f 1_{F_{k+1}^c})(A) > 2^k,
	\end{equation}
	then we choose such a set $A$ to be $E_k$, making sure that
	\begin{equation} \label{eq:stopping_condition}
		\norm{ f 1_{(A \cup F_{k+1})^c}}_{L^\infty(\ell^r)} \leq 2^k.
	\end{equation}
	In fact, if there exists a set $B \subseteq X$ such that
	\begin{equation*}
		\ell^r(f 1_{(A \cup F_{k+1})^c})(B) > 2^k,
	\end{equation*}
	then by the subadditivity of the outer measure, we have
	\begin{equation*}
		\ell^r(f 1_{ F_{k+1}^c})(A \cup B) > 2^k.
	\end{equation*}
	Due to the finiteness of $X$, the condition \eqref{eq:stopping_condition} can be achieved in finitely many steps. If no $A$ satisfying \eqref{eq:selection_condition} exists, we set $E_k$ to be empty, and proceed the recursion with $k-1$.
	
	By construction, we have \eqref{eq:superlevel_finite} for every nonempty selected set $E_k$, \eqref{eq:maximal_choice_finite} and \eqref{eq:covering_finite} for every $k \in \Z$. 
	
	We observe that for every $k$ such that $2^k$ is greater than the $L^\infty(\ell^r)$ quasi-norm of $f$, the statement \eqref{eq:optimal_covering_finite} is true. To prove \eqref{eq:optimal_covering_finite} for any other $k$, let $A_{k-1}$ be a set witnessing the super level measure at level $2^{k-1}$. In particular,
	\begin{equation*}
		\mu(\ell^r(f)>2^{k-1}) = \mu(A_{k-1}).
	\end{equation*}
	By \eqref{eq:maximal_choice_finite} for $k+1$, we have
	\begin{equation} \label{eq:equation_finite}
		\mu(A_{k-1}) \geq 2^{-r(k+1)} \sum_{A_{k-1} \setminus F_{k+1}} \omega(x)  f(x)^r .
	\end{equation}
	By the definition of $A_{k-1}$ and $E_{k}$, we have
	\begin{align*}
		\sum_{E_{k} \setminus A_{k-1}} \omega(x)  f(x)^r &\leq 2^{r(k-1)} \mu(E_{k}), \\
		\sum_{E_{k} \setminus F_{k+1}}  \omega(x) f(x)^r  &> 2^{rk} \mu(E_{k}),
	\end{align*}
	hence
	\begin{equation*}
		\sum_{(A_{k-1} \cap E_{k}) \setminus F_{k+1}}  \omega(x) f(x)^r > C 2^{r(k-1)} \mu(E_{k}).
	\end{equation*}
	Combining this with \eqref{eq:equation_finite} gives
	\begin{equation*}
		\mu(\ell^r(f)>2^{k-1}) \geq C \mu(E_{k}),
	\end{equation*}
	concluding the proof of \eqref{eq:optimal_covering_finite} for the given $k$.
\end{proof}

Now we move to the upper half space setting. Let $X$ be the upper half space and $\mu$ the outer measure generated by the pre-measure $\sigma$ on $\mathcal{D}$, the collection of open dyadic boxes in the upper half space, as in \eqref{eq:outer_measure_from_pre_measure}. In particular, 
\begin{equation} \label{eq:upperspace_dyadic_setting}
	\begin{split}
		X &= \R^d \times (0,\infty), \\
		\mathcal{D} &= \{ (x,0) + (0,2^j)^{d+1} \colon x \in 2^{j}\Z^d, j \in \Z \}, \\
		\sigma(E) &= \abs{B(E)}, \ \ \ \ \text{for every $E \in \mathcal{D}$,}\\
		\omega(y,t) &= t^{-1},
	\end{split}
\end{equation}
where $B(E)$ is the base in $\R^d$ of the dyadic box $E \in \mathcal{D}$.

In the following statement, the elements of a double sequence are parametrized by a pair $(k,n)$, for $ k \in \Z, n \in \N_k$, where $\N_k$ is either the set of positive natural numbers or a possibly empty finite initial string of positive natural numbers. We consider the lexicographic order of such pairs as follows: $(l,m) < (k,n)$ if either $l>k$, or $l = k$ and $m<n$.

We have the following decomposition result for functions in the intersection between the outer $L^p(\ell^r)$ and $L^\infty(\ell^r)$ spaces defined by \eqref{eq:outer_L_p_quasi_norm} and \eqref{eq:outer_L_infty_quasi_norm}, respectively.
\begin{proposition} \label{thm:atomic_decomposition_upperspace_dyadic}
	Let $0 < p < \infty, 0 < r \leq \infty$. There exists a constant $C = C(p,r)$ such that the following property holds. For $f \in L^p(\ell^r) \cap L^\infty(\ell^r)$, there exists a double sequence of dyadic boxes $\{ E_{k,n} \colon k \in \Z, n \in \N_k \} \subseteq \mathcal{D}$ such that if
	\begin{equation*}
		\begin{gathered}
		F_k = \bigcup_{n \in \N_k} F_{k,n}, \\
		F_{k,n} = F_{k,n-1} \cup E_{k,n}, \\
		F_{k,0}  = \bigcup_{i \in I_k} Q_i,
		\end{gathered}
	\end{equation*}
	where $\{Q_i \colon i \in I_k \} \subseteq \mathcal{D}$ is the collection of maximal dyadic boxes such that
	\begin{equation} \label{eq:doubling_condition_upperspace_dyadic}
		\abs{B(Q_i)} \leq 2 \abs{B(Q_i) \cap \bigcup_{(l,m) \colon l > k} B(E_{l,m})} ,
	\end{equation}
	then, for every $k \in \Z, n \in \N_k$,
	\begin{gather} 
		\label{eq:superlevel_upperspace_dyadic}
		\ell^r (f 1_{F_{k,n-1}^c})(E_{k,n}) > 2^k,  \ \ \ \ \text{when $E_{k,n} \neq \emptyset$,} \\
		\label{eq:maximal_choice_upperspace_dyadic}
		\norm{f 1_{F_k^c}}_{L^\infty(\ell^r)} \leq 2^{k}, \\
		\label{eq:covering_upperspace_dyadic}
		\mu(\ell^r (f) > 2^{k}) \leq C \sum_{ (l,m) \colon l \geq k } \sigma(E_{l,m}), \\
		\label{eq:optimal_covering_upperspace_dyadic}
		\sum_{n \in \N_k} \sigma (E_{k,n}) \leq C \mu(\ell^r (f) > 2^{k-1}).
	\end{gather}
	Moreover, the collection $\{ B(E_{k,n}) \colon k \in \Z, n \in \N_k \}$ of the bases of the chosen boxes is $2$-Carleson, i.e. for every dyadic box $E \in \mathcal{D}$
	\begin{equation} \label{eq:sparse_collection_upperspace_dyadic}
		\sum_{(k,n) \colon E_{k,n} \subseteq E} \sigma(E_{k,n}) \leq 2 \sigma(E).
	\end{equation}
\end{proposition}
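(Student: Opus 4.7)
The plan is to run a backward recursion on $k \in \Z$, mirroring the finite-setting argument but adapted to the dyadic geometry of the upper half space. Since $f \in L^\infty(\ell^r)$, there is a largest $k_0$ with $\|f\|_{L^\infty(\ell^r)} \leq 2^{k_0}$; for $k \geq k_0$ I declare $\N_k = \emptyset$. For $k < k_0$, assuming all $E_{l,m}$ with $l > k$ have been selected, I first build $F_{k,0}$ by taking the collection $\{Q_i\}$ of maximal dyadic boxes satisfying the doubling condition \eqref{eq:doubling_condition_upperspace_dyadic}; maximality is well-defined because an arbitrarily large dyadic ancestor contains a negligible fraction of the higher-level bases. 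Then I select $E_{k,n}$ recursively by a greedy largest-first rule: if any dyadic box $E$ satisfies $\ell^r(f 1_{F_{k,n-1}^c})(E) > 2^k$, I pick $E_{k,n}$ among them with $\sigma(E_{k,n})$ at least half the supremum of such $\sigma$-values (a supremum which is finite because $\|f 1_{F_{k,n-1}^c}\|_{L^p(\ell^r)} \leq \|f\|_{L^p(\ell^r)} < \infty$ forces $\sigma$ to go to zero along the selection). The selection at level $k$ either terminates at a finite $\N_k$ or yields an infinite sequence with $\sigma(E_{k,n}) \to 0$.

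Property \eqref{eq:superlevel_upperspace_dyadic} is immediate from the selection rule. For \eqref{eq:maximal_choice_upperspace_dyadic}, if the sequence terminates the bound holds tautologically; if it is infinite, I argue that in the limit no dyadic $E$ can satisfy $\ell^r(f 1_{F_k^c})(E) > 2^k$, for such $E$ would eventually exceed the decaying supremum $\sigma(E_{k,n})$, contradicting the greedy choice, and then by the definition of the outer measure from a pre-measure, this $L^\infty$ bound passes from dyadic boxes to arbitrary subsets. A crucial observation, which I will use repeatedly, is that $E_{k,n} \not\subseteq F_{k,0}$ (otherwise the size in \eqref{eq:superlevel_upperspace_dyadic} would vanish); by the maximality of the $Q_i$ and dyadic nesting, $E_{k,n}$ itself fails the doubling condition, so
\begin{equation*}
	|B(E_{k,n}) \setminus \textstyle\bigcup_{l > k,\, m}B(E_{l,m})| > \tfrac12 |B(E_{k,n})|.
\end{equation*}

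The Carleson condition \eqref{eq:sparse_collection_upperspace_dyadic} then follows by backward induction on the levels: fixing a dyadic $E$, each $E_{k,n} \subseteq E$ charges its "new" half to the pre-measure of $E$ in a telescoping fashion, and the greedy choice prevents same-level overlaps from accumulating more than the total $\sigma(E)$. Property \eqref{eq:covering_upperspace_dyadic} follows from the observation that the super level set $\{\ell^r(f) > 2^k\}$ is covered by $F_k \cup F_{k+1} \cup \cdots$; the boxes in $F_{l,0}$ are bounded in total pre-measure by $2 \sum_{l' > l, m} \sigma(E_{l',m})$ thanks to the doubling condition combined with \eqref{eq:sparse_collection_upperspace_dyadic}, yielding the desired bound after summation and absorbing the geometric factor. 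Finally, \eqref{eq:optimal_covering_upperspace_dyadic} is proved by the same device as in Proposition \ref{thm:atomic_decomposition_finite}: choose a near-optimal witness $A_{k-1}$ for $\mu(\ell^r(f) > 2^{k-1})$, use \eqref{eq:maximal_choice_upperspace_dyadic} at level $k+1$ to bound the contribution on $A_{k-1} \setminus F_{k+1}$, and combine with the selection lower bound \eqref{eq:superlevel_upperspace_dyadic} on the $E_{k,n}$. The main obstacles are the well-definedness and termination of the greedy procedure, for which the finite $L^p(\ell^r)$-norm is essential, and the proof of \eqref{eq:sparse_collection_upperspace_dyadic}, for which the design of $F_{k,0}$ via the doubling condition is tailor-made.
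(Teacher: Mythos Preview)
Your overall strategy coincides with the paper's: backward recursion on $k$, greedy maximal selection of dyadic boxes at each level, with $F_{k,0}$ built from the doubling condition precisely so that the selected bases form a sparse family. Two points, however, are not in order.

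First, select $E_{k,n}$ with $\sigma(E_{k,n})$ \emph{exactly} maximal rather than within a factor $2$ of the supremum. This is available because the values of $\sigma$ on $\mathcal{D}$ are powers of $2^d$, and the supremum over candidates is finite by the single-box version of the argument for \eqref{eq:optimal_covering_upperspace_dyadic} (this is the paper's justification; your phrasing ``forces $\sigma$ to go to zero along the selection'' is circular, since that convergence is a consequence of the selections already made). Exact maximality gives pairwise disjointness of $\{E_{k,n} : n \in \N_k\}$ for fixed $k$: if $E_{k,n} \cap E_{k,n'} \neq \emptyset$ with $n' > n$, then dyadic nesting and $\sigma(E_{k,n'}) \leq \sigma(E_{k,n})$ force $E_{k,n'} \subseteq E_{k,n} \subseteq F_{k,n'-1}$, contradicting \eqref{eq:superlevel_upperspace_dyadic}. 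With this, the sets $\widetilde{B}_{k,n} = B(E_{k,n}) \setminus \bigcup_{l>k,m} B(E_{l,m})$ are pairwise disjoint across \emph{all} $(k,n)$, so the family is $1/2$-sparse and \eqref{eq:sparse_collection_upperspace_dyadic} follows with the stated constant $2$. Under your half-supremum rule, same-level nesting is possible (e.g.\ in $d=1$ one may pick $E_{k,1}$ and then its dyadic parent as $E_{k,2}$), and the telescoping you sketch does not obviously recover the constant $2$.

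Second, and more seriously, your argument does not cover $r=\infty$. If $\ell^\infty(f 1_{F_{k,n-1}^c})(A) > 2^k$ then the same inequality holds for every dyadic ancestor of $A$, so the supremum of $\sigma$ over candidates is infinite and the greedy selection cannot begin. Likewise, the ``same device as in Proposition~\ref{thm:atomic_decomposition_finite}'' for \eqref{eq:optimal_covering_upperspace_dyadic} breaks down, since knowing a single point in $E_{k,n}$ lies in $A_{k-1}$ gives no bound on $\sigma(E_{k,n})$. The paper repairs this by changing the selection for $r=\infty$: one requires $\ell^\infty(f 1_{F_{k,n-1}^c} 1_{A^+})(A) > 2^k$, where $A^+$ is the upper half of $A$. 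Because no smaller dyadic box can reach $A^+$, any box from a cover of $A_{k-1}$ containing the witnessing point must have $\sigma \geq \sigma(A)$, which restores both the finiteness of the supremum and the proof of \eqref{eq:optimal_covering_upperspace_dyadic}.
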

A dyadic box satisfies the condition in \eqref{eq:doubling_condition_upperspace_dyadic} for a certain $k \in \Z$ when at least half of its base is covered by the bases of the elements of the double sequence selected up to the level $k+1$. 
\begin{proof}
	For $0< r < \infty$, the selection algorithm is analogous to that described in the previous proof. We define $E_{k,n}$ by a double recursion, backward on $k \in \Z$, and, for every fixed $k$, forward on $n \in \N_k$. For $k$ large enough such that
	\begin{equation*}
		\norm{f}_{L^\infty(\ell^r)} \leq 2^k,
	\end{equation*}
	we set $\N_k$ empty. Now fix $(k,n)$ and assume we have selected $E_{l,m}$ for $(l,m) < (k,n)$. In particular, $F_{k,n-1}$ is already well-defined. If there exists a dyadic box $A \in \mathcal{D}$ such that
	\begin{equation} \label{eq:selection_condition_upper_half_space}
		\ell^r(f 1_{F_{k,n-1}^c})(A) > 2^k,
	\end{equation}
	then we choose such a dyadic box $A$ to be $E_{k,n}$, making sure that $\sigma(A)$ is maximal. The maximality of $\sigma(A)$ is achieved in finitely many steps because the set of values of $\sigma$ is discrete and doubling and we have an upper bound on $\sigma(A)$ when $A$ satisfies the condition \eqref{eq:selection_condition_upper_half_space}. In fact, we have
	\begin{equation*}
		\sigma(A) \leq C \mu(\ell^r(f)>2^{k-1}) \leq C 2^{-kp} \norm{f}_{L^p(\ell^r)}^p,
	\end{equation*}
	where in the first inequality we used an argument analogous to that used to prove \eqref{eq:optimal_covering_finite} above. 
	
	If no $A$ satisfying \eqref{eq:selection_condition_upper_half_space} exists, we set $\N_k = \{ 1, \dots, n \}$, $\N_k$ empty if $n=1$, and proceed the recursion with $(k-1,1)$. If for some $k$ we are able to choose $E_{k,n}$ for all $n \in \N$, we fix such $E_{k,n}$ and proceed the recursion with $(k-1,1)$.
	
	By construction, we have \eqref{eq:superlevel_upperspace_dyadic} for every nonempty selected dyadic box $E_{k,n}$.
	
	We prove \eqref{eq:maximal_choice_upperspace_dyadic} and \eqref{eq:optimal_covering_upperspace_dyadic} by backward induction on $k \in \Z$. We observe that for every $k$ such that $2^k$ is greater than the $L^\infty(\ell^r)$ quasi-norm of $f$, both statements are true.
	
	The proof of \eqref{eq:optimal_covering_upperspace_dyadic} for any other $k$ assuming \eqref{eq:maximal_choice_upperspace_dyadic} for $k+1$, which we have by the induction hypothesis, is analogous to that of the corresponding property \eqref{eq:optimal_covering_finite} in the previous result. The minor adjustments concern the fact that we have to deal with a near optimal cover of the super level measure set at level $2^{k-1}$ and a collection $\{E_{k,n} \colon n \in \N_k \}$ instead of the sets $A_{k-1}$ and $E_{k}$.
	
	Now we prove \eqref{eq:maximal_choice_upperspace_dyadic} for every $k$ such that $2^k$ is strictly smaller than the $L^\infty(\ell^r)$ quasi-norm of $f$. If $\N_k$ is finite, then by construction there are no dyadic boxes $A \in \mathcal{D}$ such that
	\begin{equation*}
		\ell^r(f 1_{F_k^c})(A) > 2^k.
	\end{equation*}
	If $\N_k$ is infinite, we observe by \eqref{eq:optimal_covering_upperspace_dyadic}, which we already proved for this $k$, that
	\begin{equation*}
		\sum_{n \in \N_k} \sigma(E_{k,n}) < \infty,
	\end{equation*}
	since $f \in L^p(\ell^r)$. Therefore, $\sigma(E_{k,n})$ tends to zero as $n$ tends to $\infty$. Since each $E_{k,n}$ is chosen to maximize $\sigma(E_{k,n})$, there exists no dyadic box $A \in \mathcal{D}$ which can violate \eqref{eq:maximal_choice_upperspace_dyadic} as such $A$ would contradict the choice of $E_{k,n}$ for sufficiently large $n$. This concludes the proof of \eqref{eq:maximal_choice_upperspace_dyadic} for the given $k$.
	
	With \eqref{eq:maximal_choice_upperspace_dyadic}, we also have \eqref{eq:covering_upperspace_dyadic}. In fact, we have
	\begin{align*}
		\mu(F_k) &\leq \mu(F_{k-1,0}) \\
		&\leq \sum_{i \in I_{k-1}} \abs{B(Q_i)} \\
		&\leq 2 \abs{\bigcup_{i \in I_{k-1}} B(Q_i) \cap \bigcup_{(l,m) \colon l\geq k} B(E_{l,m})} \\
		&\leq C \sum_{(l,m) \colon l \geq k} \sigma(E_{l,m}),
	\end{align*}
	where we used \eqref{eq:doubling_condition_upperspace_dyadic} and the disjointness of the elements of $\{Q_i \colon i \in I_{k-1} \}$ in the third inequality.
	
	For $r= \infty$, the only difference is in the selection of $E_{k,n}$. Fix $(k,n)$ and assume we have selected $E_{l,m}$ for $(l,m) < (k,n)$. If there exists a dyadic box $A = (x,0) + (0,s)^{d+1} \in \mathcal{D}$ such that
	\begin{equation*}
		\ell^\infty(f 1_{F_{k,n-1}^c} 1_{A^+})(A) > 2^k,
	\end{equation*}
	where $A^+ = (x,0)+((0,s)^d \times (s/2,s))$, then we choose such a dyadic box $A$ to be $E_{k,n}$, making sure $\sigma(E_{k,n})$ is maximal. The proof of \eqref{eq:superlevel_upperspace_dyadic}, \eqref{eq:maximal_choice_upperspace_dyadic}, \eqref{eq:covering_upperspace_dyadic}, and \eqref{eq:optimal_covering_upperspace_dyadic} then follows in a straight-forward way.
	
	To conclude, we observe that the collection $\{ B(E_{k,n}) \colon k \in \Z, n \in \N_k \}$ is $1/2$-sparse, i.e. one can choose pairwise disjoint measurable sets $\widetilde{B}_{k,n} \subseteq B(E_{k,n})$ with $\abs{\widetilde{B}_{k,n}} \geq \abs{ B(E_{k,n})}/2$. This follows by \eqref{eq:doubling_condition_upperspace_dyadic} and the maximality in the choice of $E_{k,n}$. Therefore, the collection is $2$-Carleson.
\end{proof}

\section{Equivalence with norms}
In this section we prove Theorem \ref{thm:collapsing_Holder_triangular_upper_half_space} and Theorem \ref{thm:collapsing_Holder_triangular_finite}. We start with the upper half space setting. First, we prove property $(i)$. After that, for every $f \in L^p(\ell^r) \cap L^\infty(\ell^r)$, for $1 \leq p,r \leq \infty$, we provide a candidate function $g$ to realize \eqref{eq:norm_realization}, up to normalization of its outer $L^{p'}(\ell^{r'})$ quasi-norm. Upon showing an upper bound on the outer $L^{p'}(\ell^{r'})$ quasi-norm of $g$ and a lower bound on the $L^1(X,\omega)$ norm of $fg$, properties $(ii),(iii)$ follow. 
Then we turn to the finite setting and when possible we follow analogous arguments to prove properties $(i),(ii)$, and $(iii)$. In almost all the definitions and proofs we make use of the decompositions provided by Proposition \ref{thm:atomic_decomposition_upperspace_dyadic} and Proposition \ref{thm:atomic_decomposition_finite}.  
Finally, in Lemma \ref{thm:no_uniformity_finite} we exhibit a counterexample to the uniformity in every finite setting $(X,\mu,\omega)$ of both statements in $(ii),(iii)$ for $p=1, 1 < r \leq \infty$.

We start with the upper half space setting, where $(X,\mu,\omega)$ is the setting described in \eqref{eq:upperspace_dyadic_setting},
\begin{proof}[Proof of Theorem \ref{thm:collapsing_Holder_triangular_upper_half_space}, property $(i)$]
	The case $p= \infty$ follows by definition.
	
	Therefore, we can assume without loss of generality $p=1$, since
	\begin{equation*}
		\norm{f}_{L^p(\ell^p)}^p = \norm{f^p}_{L^1(\ell^1)}.
	\end{equation*}
	For $f \in L^1(\ell^1) \cap L^\infty(\ell^1)$, let $\{E_{k,n}\}$ be the collection of dyadic boxes from Proposition \ref{thm:atomic_decomposition_upperspace_dyadic}. We have
	\begin{align*}
		\norm{f}_{L^1(\ell^1)} &\leq C \sum_{k \in \Z} 2^{k} \mu(\ell^1 (f) >2^k) \\
		&\leq C \sum_{k \in \Z} 2^{k} \sum_{(l,m) \colon l \geq k} \sigma(E_{l,m}) \\
		&\leq C \sum_{l \in \Z} \sum_{m \in \N_l} 2^l \sigma(E_{l,m}) \\
		&\leq C \sum_{l \in \Z} \sum_{m \in \N_l} \norm{f}_{L^1(E_{l,m} \setminus F_{l,m-1}, \omega)} \\
		&\leq C \norm{f}_{L^1(X, \omega)},
	\end{align*}
	where we used \eqref{eq:covering_upperspace_dyadic} in the second inequality, Fubini and the bounds on the geometric series in the third, \eqref{eq:superlevel_upperspace_dyadic} in the fourth, and disjointness of the sets in the fifth.
	
	We note that $f$ vanishes $\omega$-almost everywhere outside the union of all the selected dyadic boxes $\{ E_{k,n} \}$, since $\mathcal{D}$ covers all of $X$. We have
	\begin{align*}
		\norm{f}_{L^1(X, \omega)} &= \sum_{k \in \Z} \sum_{n \in \N_k}  \norm{f 1_{E_{k,n} \setminus F_{k,n-1}}}_{L^1(X, \omega)} + \sum_{k \in \Z} \norm{f 1_{F_{k,0} \setminus F_{k+1}}}_{L^1(X, \omega)} \\
		&\leq \sum_{k \in \Z} \sum_{n \in \N_k}  \norm{f 1_{E_{k,n} \setminus F_{k+1}}}_{L^1(X, \omega)} + \sum_{k \in \Z} \sum_{i \in I_k} \norm{f 1_{Q_{i} \setminus F_{k+1}}}_{L^1(X, \omega)}  \\
		&\leq \sum_{k \in \Z} 2^{k+1} \sum_{n \in \N_k} \sigma(E_{k,n}) + \sum_{k \in \Z} 2^{k+1} \sum_{i \in I_k} \sigma(Q_i) \\
		&\leq \sum_{k \in \Z} 2^{k+1} \sum_{n \in \N_k} \sigma(E_{k,n}) + \sum_{k \in \Z} 2^{k+1} \sum_{(l,m) \colon l > k} \sigma(E_{l,m}) \\
		&\leq \sum_{k \in \Z} 2^{k+1} \sum_{n \in \N_k} \sigma(E_{k,n}) + C \sum_{l \in \Z} 2^{l+1} \sum_{m \in \N_l} \sigma(E_{l,m}) \\
		&\leq C \sum_{k \in \Z} 2^{k} \mu( \ell^1(f) > 2^{k-1}) \\
		&\leq C \norm{f}_{L^1(\ell^1)},
	\end{align*}
	where we used \eqref{eq:maximal_choice_upperspace_dyadic} in the second inequality, \eqref{eq:doubling_condition_upperspace_dyadic} and the disjointness of the dyadic boxes $\{Q_i\}$ in the third, Fubini and the bounds on the geometric series in the fourth, and \eqref{eq:optimal_covering_upperspace_dyadic} in the fifth. 
	
	A standard approximation argument yields the result for arbitrary $f \in L^1(\ell^1)$.
\end{proof}

Now we provide the candidate function $g$ for $f \in L^p(\ell^r) \cap L^\infty(\ell^r)$, for $1 \leq p, r \leq \infty$. We separate into four cases depending on $p$ and $r$.

{\textbf{Case 1: $1 \leq p, r < \infty$.}} For $f \in L^p(\ell^r) \cap L^\infty(\ell^r)$, let $\{E_{k,n}\}$ be the collection from Proposition \ref{thm:atomic_decomposition_upperspace_dyadic}, and define
\begin{equation*} \label{eq:dual_function_upperspace_dyadic}
	g(x,s) = \sum_{k \in \Z } \sum_{n \in \N_k} 2^{k(p-r)} 1_{E_{k,n} \setminus F_{k,n-1}}(x,s) f(x,s)^{r-1}.
\end{equation*}

{\textbf{Case 2: $1 \leq p < \infty$ and $r= \infty$.}} For $f \in L^p(\ell^\infty) \cap L^\infty(\ell^\infty)$, let $\{E_{k,n}\}$ be the collection from Proposition \ref{thm:atomic_decomposition_upperspace_dyadic}, and define
\begin{equation*} \label{eq:dual_function_upperspace_dyadic_r_infty}
	g(x,s) = \sum_{k \in \Z } \sum_{n \in \N_k} 2^{k(p-1)} 1_{\widetilde{E}_{k,n}}(x,s) (\ell^1(1_{\widetilde{E}_{k,n}})(E_{k,n}))^{-1} ,
\end{equation*}
where 
\begin{equation*}
	\widetilde{E}_{k,n} = E_{k,n}^+ \cap \{ f > 2^{k} \},
\end{equation*}
and $E_{k,n}^+$ is the upper half of $E_{k,n}$.

{\textbf{Case 3: $p= \infty$ and $1 \leq r < \infty$.}} For $f \in L^\infty(\ell^r)$, let the dyadic box $E \in \mathcal{D}$ witness the outer $L^\infty(\ell^r)$ quasi-norm of $f$ up to a factor $2$, and define
\begin{equation*} \label{eq:dual_function_upperspace_dyadic_p_infty}
	g(x,s) = 1_E(x,s) f(x,s)^{r-1} .
\end{equation*}

{\textbf{Case 4: $p= r = \infty$.}} For $f \in L^\infty(\ell^\infty)$, let the dyadic box $E \in \mathcal{D}$ witness the outer $L^\infty(\ell^\infty)$ quasi-norm of $f$ up to a factor $2$ in a subset of strictly positive measure in $E^+$, and define
\begin{equation*} \label{eq:dual_function_upperspace_dyadic_p_infty_r_infty}
	g(x,s) = 1_{\widetilde{E}}(x,s) (\ell^1(1_{\widetilde{E}})(E))^{-1},
\end{equation*}
where 
\begin{equation*}
	\widetilde{E} = E^+ \cap \{ f > \norm{f}_{L^\infty(\ell^\infty)}/2 \}.
\end{equation*}

We have the following upper bounds on the outer $L^{p'}(\ell^{r'})$ quasi-norm of $g$, where $g$ is defined according to the four $(p,r)$-dependent cases.
\begin{lemma} \label{thm:upper_bound_norm_realizing_function_upperspace_dyadic}
	{\textbf{Case I: $p=1$ and $ 1 \leq r \leq \infty$.}} We have
	\begin{equation*}
		\norm{g}_{L^{\infty}(\ell^{r'})} \leq C.
	\end{equation*}
	{\textbf{Case II: $1 < p < \infty$ and $1 \leq r \leq \infty$.}} We have
	\begin{equation*}
		\norm{g}_{L^{p'}(\ell^{r'})}^{p'} \leq C \norm{f}_{L^p(\ell^r)}^{p}.
	\end{equation*}
	{\textbf{Case III: $p= \infty$ and $1 \leq r < \infty$.}} We have
	\begin{equation*}
		\norm{g}_{L^{1}(\ell^{r'})} \leq \norm{f}_{L^\infty(\ell^r)}^{r-1} \sigma(E).
	\end{equation*}
	{\textbf{Case IV: $p=r = \infty$.}} We have
	\begin{equation*}
		\norm{g}_{L^{1}(\ell^1)} \leq \sigma(E).
	\end{equation*}
\end{lemma}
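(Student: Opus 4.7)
The plan is to split the four cases into two groups according to the support of the candidate $g$. In Cases III and IV (the $p=\infty$ endpoints) $g$ is supported in a single dyadic box $E$, and the estimate will be obtained by combining a straightforward bound on the super level measure with a computation of $\norm{g}_{L^\infty(\ell^{r'})}$. In Cases I and II, $g$ is built from the greedy decomposition $\{E_{k,n}\}$ of Proposition \ref{thm:atomic_decomposition_upperspace_dyadic}, and the estimate will be deduced from a single pivotal inequality for $\norm{g 1_{F_k^c}}_{L^\infty(\ell^{r'})}$ uniform in $k$.

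For Cases III and IV, the support of $g$ inside $E$ lets me take $E$ itself to witness the super level measure, giving $\mu(\ell^{r'}(g)>\lambda)\leq\sigma(E)$ for every $\lambda>0$, so by layer-cake $\norm{g}_{L^1(\ell^{r'})}\leq\sigma(E)\,\norm{g}_{L^\infty(\ell^{r'})}$. In Case III, the identity $g^{r'}=1_E f^r$ (from $(r-1)r'=r$) and $\mu(A\cap E)\leq\mu(A)$ yield $\ell^{r'}(g)(A)\leq\ell^r(f)(A\cap E)^{r-1}\leq\norm{f}_{L^\infty(\ell^r)}^{r-1}$. In Case IV, the geometry of the upper half space forces any proper dyadic sub-box of $E$ of side $s'<s$ to lie in $\{t<s/2\}$ and hence to be disjoint from $E^+$, so the only nonzero values of $\ell^1(g)(A)$ occur for dyadic $A\supseteq E$, on which $\ell^1(g)(A)=\mu(E)/\mu(A)\leq 1$.

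For Cases I and II, the heart of the argument is the pivotal bound $\ell^{r'}(g 1_{F_k^c})(A)\leq C\,2^{k(p-1)}$, valid for every dyadic $A$ and every $k\in\Z$. To prove it, I would discard the terms in $g$ whose support lies in $F_k$ (those indexed by $k'\geq k$) and classify each remaining $E_{k',n}$ as a dyadic subset or superset of $A$. In the subset case, \eqref{eq:maximal_choice_upperspace_dyadic} at level $k'+1$ controls $\int_{E_{k',n}\setminus F_{k',n-1}} t^{-1} f^r$ by $C\,2^{k'r}\sigma(E_{k',n})$, and summing over $k'<k$ using the pairwise disjointness of same-level boxes and the Carleson condition \eqref{eq:sparse_collection_upperspace_dyadic} produces a geometric series collapsing to $C\,2^{k(p-1)r'}\mu(A)$ via the algebraic identity $(p-r)r'+r=(p-1)r'$. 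The main obstacle is the superset case $A\subseteq E_{k',n_{k'}}$: the pieces $A_{k'}:=A\setminus F_{k',n_{k'}-1}$ satisfy two relations across levels which together are incompatible unless all but one vanish. On the one hand, the monotonicity chain $F_{k'',n_{k''}-1}\subseteq F_{k''}\subseteq F_{k'+1}\subseteq F_{k',0}\subseteq F_{k',n_{k'}-1}$ yields $A_{k'}\subseteq A_{k''}$ for $k'<k''$; on the other, the pairwise disjointness across levels of the pieces $E_{l,m}\setminus F_{l,m-1}$ yields $A_{k'}\cap A_{k''}=\emptyset$. Hence only the unique maximum level $\bar k$ with $A\subseteq E_{\bar k,n_{\bar k}}$ contributes, and one application of \eqref{eq:maximal_choice_upperspace_dyadic} at level $\bar k+1$ returns the matching bound. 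The $r=\infty$ variant is simpler because $\widetilde E_{k',n}\subseteq E_{k',n}^+$ eliminates contributions from proper dyadic sub-boxes outright.

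Once the pivotal bound is in hand, Case I ($p=1$) follows by letting $k\to+\infty$ so that $F_k=\emptyset$ and $g 1_{F_k^c}=g$. For Case II ($1<p<\infty$) I would use $F_k$ as a competitor in the super level measure to get $\mu(\ell^{r'}(g)>C\,2^{k(p-1)})\leq\mu(F_k)\leq C\sum_{l\geq k,m}\sigma(E_{l,m})$ via \eqref{eq:covering_upperspace_dyadic}, and then combine the discrete layer-cake representation of $\norm{g}_{L^{p'}(\ell^{r'})}^{p'}$, Fubini on the double sum (with the identity $(p-1)p'=p$), and \eqref{eq:optimal_covering_upperspace_dyadic} to recover $\norm{g}_{L^{p'}(\ell^{r'})}^{p'}\leq C\sum_l 2^{lp}\mu(\ell^r(f)>2^{l-1})\asymp\norm{f}_{L^p(\ell^r)}^p$.
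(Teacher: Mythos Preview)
Your proposal is correct and follows essentially the same route as the paper: the paper also separates the $p=\infty$ endpoints (handled via $\norm{g}_{L^\infty(\ell^{r'})}$ and the trivial support bound, phrased there through outer H\"older) from the $p<\infty$ cases (handled via the size estimate $\ell^{r'}(g 1_{F_k^c})(A)\leq C\,2^{k(p-1)}$ and the Carleson property \eqref{eq:sparse_collection_upperspace_dyadic}), and your superset-collapse argument is exactly what justifies the single $\sigma(A)$ term that the paper writes down in \eqref{eq:size_norm_realizing_function_upperspace_dyadic} without further comment. One slip: in your $r=\infty$ remark you mean that $\widetilde E_{k',n}\subseteq E_{k',n}^+$ eliminates contributions from dyadic $A$ \emph{properly contained in} $E_{k',n}$ (the super-box case), not from sub-boxes $E_{k',n}\subsetneq A$, which do contribute and are handled by same-level disjointness as in \eqref{eq:level_set_norm_realizing_function_upperspace_dyadic_infty}.
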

\begin{proof}
	{\textbf{Case I: $p=1$ and $ 1 \leq r \leq \infty$.}} Let $1<r < \infty$. For every dyadic box $A \in \mathcal{D}$, we have
	\begin{equation}  \label{eq:size_norm_realizing_function_upperspace_dyadic}
		\begin{split}
			(\ell^{r'}(g)(A))^{r'} &= \frac{1}{\sigma(A)} \sum_{k \in \Z} \sum_{n \in \N_k} 2^{-kr} \int_{A \cap (E_{k,n} \setminus F_{k,n-1})} f(y,t)^r \omega(y,t) \diff y \diff t \\
			&\leq \frac{1}{\sigma(A)} \sum_{k \in \Z} \sum_{n \in \N_k} 2^{-kr} \int_{A \cap (E_{k,n} \setminus F_{k+1})} f(y,t)^r \omega(y,t) \diff y \diff t \\
			&\leq C \frac{1}{\sigma(A)} ( \sigma(A) + \sum_{(k,n) \colon E_{k,n} \subseteq A} \sigma(E_{k,n})) \\
			&\leq C,
		\end{split}
	\end{equation}
	where we used \eqref{eq:maximal_choice_upperspace_dyadic} and the nested structure of $\mathcal{D}$, namely the fact that for $A,B \in \mathcal{D}, A \cap B \neq \emptyset$, then either $A \subseteq B$ or $B \subseteq A$, in the second inequality, and \eqref{eq:sparse_collection_upperspace_dyadic} in the third.
	
	In an analogous way, for every dyadic box $A \in \mathcal{D}$, for $r=\infty$, we have
	\begin{equation*}
		\ell^{1}(g)(A) \leq C, 
	\end{equation*}
	and it is easy to see that, for $r=1$, we have
	\begin{equation*}
		\ell^{\infty}(g)(A) \leq 1. 
	\end{equation*} 
	
	Therefore, for $1 \leq r \leq \infty$, we have
	\begin{equation*}
		\norm{g}_{L^{\infty}(\ell^{r'})} \leq C.
	\end{equation*}
	
	{\textbf{Case II: $1< p <\infty$ and $1 \leq r \leq \infty$.}} Let $1<r<\infty$. For a fixed $k$ and every dyadic box $A \in \mathcal{D}$, we have
	\begin{equation} \label{eq:level_set_norm_realizing_function_upperspace_dyadic_r}
		\begin{split}
			(\ell^{r'}(g 1_{F_k^c})(A))^{r'} &= \frac{1}{\sigma(A)} \sum_{(l,m) \colon l<k} 2^{l(p-r)r'} \int_{A \cap (E_{l,m} \setminus F_{l,m-1})} f(y,t)^r \omega(y,t) \diff y \diff t \\
			&\leq \sum_{l<k} 2^{l(p-r)r'} \frac{1}{\sigma(A)} \int_{A \setminus F_{l+1}} f(y,t)^r \omega(y,t) \diff y \diff t \\
			&\leq c \sum_{l<k} 2^{l(p-r+r-1)r'} \\
			&\leq c 2^{k(p-1)r'},
		\end{split}
	\end{equation}
	where we used \eqref{eq:maximal_choice_upperspace_dyadic} in the second inequality, and the bounds on the geometric series in the third.
	
	In an analogous way, for every dyadic box $A \in \mathcal{D}$, for $r=\infty$, we have
	\begin{equation} \label{eq:level_set_norm_realizing_function_upperspace_dyadic_infty}
		\begin{split}
		\ell^{1}(g 1_{F_k^c})(A) & = \frac{1}{\sigma(A)} \sum_{l<k} \sum_{m \in \N_l}  2^{l(p-1)} \int_{A \cap \widetilde{E}_{l,m}} (\ell^1(1_{\widetilde{E}_{l.m}})(E_{l.m}))^{-1}  \omega(y,t) \diff y \diff t \\
		&\leq \sum_{l<k} 2^{l(p-1)} \frac{1}{\sigma(A)} \sum_{m \colon E_{l,m} \subseteq A} \sigma(E_{l,m}) \\
		&\leq c 2^{k(p-1)}, 
		\end{split}
	\end{equation}
	where we used the disjointness of the elements of $\{E_{l,m} \colon m \in \N_l\}$ due to the maximality in their choice, and the bounds on the geometric series in the second inequality. 
	
	It is easy to see that, for every dyadic box $A \in \mathcal{D}$, for $r=1$, we have
	\begin{equation*}
		\ell^{\infty}(g 1_{F_k^c})(A) \leq 2^{k(p-1)}. 
	\end{equation*} 
	
	As a consequence, for $1 \leq r \leq \infty$, for every dyadic box $A \in \mathcal{D}$, we have
	\begin{equation*}
		\ell^{r'} (g 1_{F_k^c})(A) \leq c 2^{k(p-1)},
	\end{equation*}
	hence
	\begin{equation} \label{eq:upper_bound_level_set}
		\mu( \ell^{r'}(g) > c 2^{k(p-1)}) \leq \mu(F_k) \leq C \sum_{(l,m) \colon l \geq k} \sigma(E_{l,m}).
	\end{equation}
	Therefore, we have
	\begin{align*}
		\norm{g}_{L^{p'}(\ell^{r'})}^{p'} &\leq C \sum_{k \in \Z} 2^{kp} \mu( \ell^{r'}(g) > c 2^{k(p-1)}) \\
		&\leq C \sum_{k \in \Z} 2^{kp} \sum_{(l,m) \colon l\geq k} \sigma(E_{l,m}) \\
		&\leq C \sum_{l \in \Z} 2^{lp} \sum_{m \in \N_l} \sigma(E_{l,m}) \\
		&\leq C \sum_{l \in \Z} 2^{lp} \mu(\ell^{r}(f) > 2^{l-1}) \\
		&\leq C \norm{f}_{L^p(\ell^r)}^p,
	\end{align*}
	where we used \eqref{eq:upper_bound_level_set} in the second inequality, Fubini and the bounds on the geometric series in the third, and \eqref{eq:optimal_covering_upperspace_dyadic} in the fourth.
	
	{\textbf{Case III: $p=\infty$ and $1 \leq r < \infty$.}} By construction we have
	\begin{equation*}
		\norm{g}_{L^\infty(\ell^{r'})} \leq \norm{f}_{L^\infty(\ell^r)}^{r-1},
	\end{equation*}
	therefore, by outer H\"{o}lder's inequality, Proposition \ref{thm:Holder_inequality}, we have
	\begin{equation*}
		\norm{g}_{L^1(\ell^{r'})} \leq \norm{g}_{L^\infty(\ell^{r'})} \norm{1_E}_{L^1(\ell^\infty)} \leq \norm{f}_{L^\infty(\ell^r)}^{r-1} \sigma(E).
	\end{equation*}
	
	{\textbf{Case IV: $p=r = \infty$.}} In an analogous way, we have
	\begin{equation*}
		\norm{g}_{L^1(\ell^1)} \leq \sigma(E),
	\end{equation*}
	since by construction $\norm{g}_{L^\infty(\ell^1)} = 1$.
\end{proof}
\begin{remark} \label{rmk:gain_of_the_endpoint}
	Without the crucial property of the decomposition established by \eqref{eq:sparse_collection_upperspace_dyadic}, the argument in \eqref{eq:size_norm_realizing_function_upperspace_dyadic} above produce the empty upper bound 
	\begin{equation*}
		(\ell^{r'}(g)(E))^{r'} \leq \sum_{k \in \Z} 1.
	\end{equation*}
	Nevertheless, when $1 < p < \infty$, in \eqref{eq:level_set_norm_realizing_function_upperspace_dyadic_r} and in \eqref{eq:level_set_norm_realizing_function_upperspace_dyadic_infty} we can already get a summable decay in $l<k$ for the upper bound on the $\ell^{r'}$ size of $g$ over the sets $A \cap (F_l \setminus F_{l+1})$, and it is not necessary to invoke \eqref{eq:sparse_collection_upperspace_dyadic}.
\end{remark}

We have the following lower bounds on the $L^1(X,\omega)$ norm of $fg$, where as above $g$ is defined according to the four $(p,r)$-dependent cases.
\begin{lemma} \label{thm:lower_bound_upperspace_dyadic}
	{\textbf{Case I: $1 \leq p < \infty$ and $1 \leq r \leq \infty$.}} We have
	\begin{equation*}
		\norm{fg}_{L^{1}(X,\omega)} \geq C \norm{f}_{L^p(\ell^r)}^p.
	\end{equation*}
	{\textbf{Case II: $p= \infty$ and $1 \leq r < \infty$.}} We have
	\begin{equation*}
		\norm{fg}_{L^{1}(X,\omega)} \geq C \norm{f}^r_{L^\infty(\ell^r)} \sigma(E).
	\end{equation*}
	{\textbf{Case III: $p= r=\infty$.}} We have
	\begin{equation*}
		\norm{fg}_{L^{1}(X,\omega)} \geq C \norm{f}_{L^\infty(\ell^r)} \sigma(E).
	\end{equation*}
\end{lemma}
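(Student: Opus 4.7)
The plan is to compute $\norm{fg}_{L^1(X,\omega)}$ directly in each case by unfolding the definition of $g$, apply the lower bound on the $\ell^r$ size of $f$ over the selected sets provided by \eqref{eq:superlevel_upperspace_dyadic}, and translate the result into a bound on $\norm{f}_{L^p(\ell^r)}^p$ via the discrete layer cake representation $\norm{f}_{L^p(\ell^r)}^p \sim \sum_{k \in \Z} 2^{kp}\, \mu(\ell^r(f) > 2^k)$ together with the estimate \eqref{eq:covering_upperspace_dyadic}.

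Cases II and III are essentially immediate. In Case II, by the choice of $E$ one has $\ell^r(f)(E) \geq \tfrac{1}{2}\norm{f}_{L^\infty(\ell^r)}$, so $\norm{fg}_{L^1(X,\omega)} = \int_E f^r \omega = \sigma(E)\, \ell^r(f)(E)^r$ gives the claim at once. In Case III, $f > \tfrac{1}{2} \norm{f}_{L^\infty(\ell^\infty)}$ on $\widetilde{E}$ by construction, hence
$$\norm{fg}_{L^1(X,\omega)} = \frac{1}{\ell^1(1_{\widetilde{E}})(E)} \int_{\widetilde{E}} f\, \omega \geq \frac{\norm{f}_{L^\infty(\ell^\infty)}}{2}\cdot \frac{\int_{\widetilde{E}} \omega}{\ell^1(1_{\widetilde{E}})(E)} = \frac{\norm{f}_{L^\infty(\ell^\infty)}}{2}\, \sigma(E),$$
where the last identity uses $\widetilde{E} \subseteq E$ and the definition of $\ell^1$.

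For Case I with $1 \leq r < \infty$, unfolding $g$ and exploiting the disjointness of $\{E_{k,n} \setminus F_{k,n-1} \colon k \in \Z, n \in \N_k\}$ yields
$$\norm{fg}_{L^1(X,\omega)} = \sum_{k \in \Z}\sum_{n \in \N_k} 2^{k(p-r)} \int_{E_{k,n} \setminus F_{k,n-1}} f^r\, \omega \geq \sum_{k \in \Z} 2^{kp} \sum_{n \in \N_k} \sigma(E_{k,n}),$$
by \eqref{eq:superlevel_upperspace_dyadic}. For Case I with $r = \infty$, the same unfolding gives $\sum_{k,n} 2^{k(p-1)}\, \ell^1(1_{\widetilde{E}_{k,n}})(E_{k,n})^{-1} \int_{\widetilde{E}_{k,n}} f\, \omega$; since $f > 2^k$ on $\widetilde{E}_{k,n}$ and $\int_{\widetilde{E}_{k,n}}\omega = \sigma(E_{k,n})\, \ell^1(1_{\widetilde{E}_{k,n}})(E_{k,n})$ (from $\widetilde{E}_{k,n} \subseteq E_{k,n}^+ \subseteq E_{k,n}$), the normalizations cancel and the same lower bound $\sum_{k,n} 2^{kp}\sigma(E_{k,n})$ emerges. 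Applying \eqref{eq:covering_upperspace_dyadic}, exchanging the order of summation, and bounding the geometric series then gives
$$\norm{f}_{L^p(\ell^r)}^p \leq C \sum_{k \in \Z} 2^{kp} \sum_{(l,m)\colon l \geq k} \sigma(E_{l,m}) \leq C \sum_{l \in \Z} 2^{lp} \sum_{m \in \N_l} \sigma(E_{l,m}),$$
closing Case I. The argument is essentially bookkeeping, parallel to the proof of property $(i)$ of Theorem \ref{thm:collapsing_Holder_triangular_upper_half_space}; the only mildly subtle point, rather than a real obstacle, is the $r=\infty$ subcase, where $\widetilde{E}_{k,n}$ may intersect $F_{k,n-1}$, but the cancellation of $\ell^1(1_{\widetilde{E}_{k,n}})(E_{k,n})$ in the normalization absorbs this without issue.
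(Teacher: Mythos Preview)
Your proof is correct and follows essentially the same approach as the paper: expand $fg$ using the definition of $g$, apply \eqref{eq:superlevel_upperspace_dyadic} to obtain $\norm{fg}_{L^1(X,\omega)} \geq \sum_{k,n} 2^{kp}\sigma(E_{k,n})$, and then convert this into $C\norm{f}_{L^p(\ell^r)}^p$ via \eqref{eq:covering_upperspace_dyadic}, Fubini, and the geometric series. Your aside that $\widetilde{E}_{k,n}$ may intersect $F_{k,n-1}$ is actually unnecessary (the dyadic nesting forces $E_{k,n}^+ \cap F_{k,n-1} = \varnothing$), but since your argument for $r=\infty$ uses linearity of the integral rather than disjointness of the pieces, this does not matter.
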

\begin{proof}
	{\textbf{Case I: $1 \leq p < \infty$ and $1 \leq r \leq \infty$.}} Let $1 \leq r<\infty$. For every fixed $(k,n)$ such that $E_{k,n}$ is not empty, we have
	\begin{equation} \label{eq:single_cell_estimate}
		\ell^{1}(fg 1_{F_{k,n-1}^c})(E_{k,n}) = 2^{k(p-r)} (\ell^r(f 1_{F_{k,n-1}^c})(E_{k,n}))^r > 2^{kp}, 
	\end{equation}
	where we used \eqref{eq:superlevel_upperspace_dyadic} in the inequality.
	
	For $r=\infty$, by the definition of $g$, we have the same inequality.
	
	Therefore, for $1 \leq r \leq \infty$, we have
	\begin{align*}
		\norm{fg}_{L^1(X, \omega)} & \geq \sum_{k \in \Z} \sum_{n \in \N_k} \norm{fg}_{L^1(E_{k,n} \setminus F_{k,n-1}, \omega)} \\
		& \geq \sum_{k \in \Z} \sum_{n \in \N_k} 2^{kp} \sigma(E_{k,n}) \\
		& \geq C \sum_{l \in \Z} 2^{lp} \sum_{(k,n) \colon l \leq k} \sigma(E_{k,n}) \\
		& \geq C \sum_{l \in \Z} 2^{lp} \mu(\ell^r(f) > 2^{l}) \\
		& \geq C \norm{f}_{L^p(\ell^r)}^p,
	\end{align*}
	where we used \eqref{eq:single_cell_estimate} in the second inequality, the bounds on the geometric series and Fubini in the third, and \eqref{eq:covering_upperspace_dyadic} in the fourth.
	
	{\textbf{Case II: $p= \infty$ and $1 \leq r < \infty$.}} Let $E \in \mathcal{D}$ be the dyadic box associated with $g$, in particular
	\begin{equation*}
		\ell^r(f)(E) \geq C \norm{f}_{L^\infty(\ell^r)}.
	\end{equation*}
	Therefore, we have
	\begin{align*}
		\norm{fg}_{L^1(X,\omega)} &= \norm{f 1_{E}}_{L^r(X,\omega)}^r \\
		&= \ell^r(f)(E)^r \sigma(E)  \\
		&\geq C \norm{f}_{L^\infty(\ell^r)}^r \sigma(E). 
	\end{align*}
	
	{\textbf{Case III: $p= r =\infty$.}} In an analogous way, we have
	\begin{equation*}
		\norm{fg}_{L^1(X,\omega)} \geq C \norm{f}_{L^\infty(\ell^r)} \sigma(E). 
	\end{equation*}
\end{proof}

\begin{proof}[Proof of Theorem \ref{thm:collapsing_Holder_triangular_upper_half_space}, properties $(ii),(iii)$]
	The first inequality in $(ii)$ is given by outer H\"{o}lder's inequality, Proposition \ref{thm:Holder_inequality}.
	
	The second inequality in $(ii)$ is a corollary of the previous Lemmata for $f \in L^p(\ell^r) \cap L^\infty(\ell^r)$. A standard approximation argument yields the case of an arbitrary $f \in L^p(\ell^r)$.
	
	The statement in $(iii)$ is a corollary of the triangle inequality for the $L^1(X,  \omega)$ norm and property $(ii)$.
\end{proof}
We conclude the part of the section about the upper half space with the following observation. 

Let $X$ be the upper half space and $\nu$ the outer measure generated by the pre-measure $\sigma$ on $\mathcal{E}$, the collection of all the open boxes in the upper half space, as in \eqref{eq:outer_measure_from_pre_measure}. In particular, 
\begin{equation} \label{eq:upperspace_continuous_setting}
	\begin{split}
		X &= \R^d \times (0,\infty), \\
		\mathcal{E} &= \{ (x,0) + (0,s)^{d+1} \colon x \in \R^d, s \in (0,\infty) \}, \\
		\sigma(E) &= \abs{B(E)}, \ \ \ \ \text{for every $E \in \mathcal{E}$}, \\
		\omega(y,t) &= t^{-1},
	\end{split}
\end{equation}
where $B(E)$ is the base in $\R^d$ of the box $E$. We observe that $\mathcal{D} \subseteq \mathcal{E}$, and every box in $\mathcal{E}$ can be covered up to a set of measure zero by finitely many dyadic boxes in $\mathcal{D}$ of comparable pre-measure. Therefore, the outer $L^p(\ell^r)$ space quasi-norms in the settings \eqref{eq:upperspace_dyadic_setting} and \eqref{eq:upperspace_continuous_setting} are equivalent by Proposition \ref{thm:equivalence_outer_spaces}. As a consequence, all the previous results obtained in the setting \eqref{eq:upperspace_dyadic_setting} extend to the setting \eqref{eq:upperspace_continuous_setting}. An analogous argument applies to the outer measure structure generated by triangular tents in place of cubic boxes.

We turn now to the finite setting.
\begin{proof} [Proof of Theorem \ref{thm:collapsing_Holder_triangular_finite}]
	The proof of property $(i)$ and, for $1 < p \leq \infty, 1 \leq r < \infty$, of property $(ii)$ follows by arguments analogous to those in the previous proofs, using the decomposition in Proposition \ref{thm:atomic_decomposition_finite}. 
	
	For $p=r \in \{1,  \infty \}$, the statement in $(ii)$ follows by the equivalence between $L^p(\ell^p)$ and $L^p(X,\omega)$ by property $(i)$.
	
	The statement in $(iii)$ is again a corollary of the triangle inequality for the $L^1(X, \omega)$ norm and property $(ii)$.
\end{proof}

\begin{lemma} \label{thm:no_uniformity_finite}
	Let $1 < r \leq \infty$. For every $M > 0$, there exist a finite set $X$, an outer measure $\mu$, a strictly positive weight $\omega$, functions $f, f_n \in L^1(\ell^r)$ such that
	\begin{align*}
		\norm{f}_{L^1(\ell^r)} &\geq M \sup_{\norm{g}_{L^{\infty}(\ell^{r'})} = 1} \norm{fg}_{L^1(\ell^1)}, \\
		\norm{\sum_{n \in \N} f_n}_{L^1(\ell^r)} &\geq M \sum_{n \in \N} \norm{f_n}_{L^1(\ell^r)}.
	\end{align*}
\end{lemma}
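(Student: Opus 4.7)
The plan is to exploit the failure of the Carleson packing property \eqref{eq:sparse_collection_upperspace_dyadic} in a generic finite setting---the very property underlying the proof of Theorem \ref{thm:collapsing_Holder_triangular_upper_half_space} (see Remark \ref{rmk:gain_of_the_endpoint})---and to quantify this failure through the classical integrality gap of set cover, which is known to be unbounded over finite families.

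I focus first on the endpoint $r = \infty$, $r' = 1$. Take $\omega \equiv 1$, $f \equiv 1_X$, and consider an outer measure $\mu$ on a finite $X$ generated by a family $\mathcal{E}$ of subsets with $\sigma(E) = 1$ for each $E \in \mathcal{E}$. A direct layer-cake computation gives $\norm{f}_{L^1(\ell^\infty)} = \mu(X)$, the minimum integer number of atoms needed to cover $X$. By part $(i)$ of Theorem \ref{thm:collapsing_Holder_triangular_finite} and the definition of $\mu$ as an infimum over covers, the constraint $\norm{g}_{L^\infty(\ell^1)} \leq 1$ reduces uniformly (up to the absolute constant of part $(i)$) to the atomwise bounds $\sum_{x \in E} g(x) \leq 1$ for every $E \in \mathcal{E}$, while $\norm{fg}_{L^1(\ell^1)} \asymp \sum_x g(x)$. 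Strong LP duality then identifies the supremum over such $g$ with the LP relaxation of the same set-cover problem, so the first ratio in the statement is comparable to the integrality gap of set cover on $(X, \mathcal{E})$. This gap is classically unbounded: taking $\mathcal{E}$ to consist of $n$ independent uniformly random $n/2$-subsets of $[n]$, a union-bound argument yields minimum integer cover $\Omega(\log n)$ while the uniform fractional assignment $x_E = 2/n$ satisfies the LP constraints with total value $O(1)$. Choosing $n$ so that $\log n > CM$ for a suitable absolute constant gives the first blow-up.

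The triangle inequality counterexample follows in the same setting by taking $f_n = 1_{E_n}$ for an enumeration $\{E_n\}_n$ of $\mathcal{E}$: since $\mu(E_n) \leq \sigma(E_n) = 1$, we have $\sum_n \norm{f_n}_{L^1(\ell^\infty)} \leq |\mathcal{E}|$, while $\sum_n f_n(x)$ equals the $\mathcal{E}$-degree of $x$, which by a Chernoff bound concentrates at $|\mathcal{E}|/2$ uniformly in $x$; this yields $\norm{\sum_n f_n}_{L^1(\ell^\infty)} \gtrsim (|\mathcal{E}|/2)\mu(X) \gtrsim |\mathcal{E}| \log n$, producing the same blow-up factor.

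For the range $1 < r < \infty$, the argument above does not extend verbatim: if one keeps $f = 1_X$, then the $\ell^r$ average at atom scale, $(\abs{E}/\mu(E))^{1/r}$, absorbs the integer-cover inflation and the ratio shrinks. The principal obstacle is therefore to modify either the atom family (e.g., stratified into several scales with appropriately weighted pre-measures) or the function $f$ (piecewise constant on the common refinement of $\mathcal{E}$ with values calibrated so that $\ell^r(f)$ takes the same value on all atoms of a fixed scale) so that Proposition \ref{thm:atomic_decomposition_finite} still forces the greedy algorithm to select $\Omega(\log n)$ atoms at a single dyadic level, while the dual constraint $\sum_A \omega g^{r'} \leq \mu(A)$ reduces by H\"older's inequality to a fractional set-cover LP of value $O(1)$. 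I expect this modification can be achieved by a tensorial combination of the $r = \infty$ construction with an inner product-of-atoms structure, where the $\ell^{r'}$ constraint on $g$ is saturated along one factor and the integrality gap is realized on the other.
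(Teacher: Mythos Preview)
Your $r=\infty$ argument is essentially correct: with $\omega\equiv 1$, $f=1_X$, and $\mu$ generated by a uniform pre-measure on $\mathcal{E}$, the first ratio is comparable (via Theorem~\ref{thm:collapsing_Holder_triangular_finite}$(i)$ and the atomwise description of $L^\infty(\ell^1)$) to the set-cover integrality gap on $(X,\mathcal{E})$, which random constructions make logarithmically large; the triangle-inequality failure then follows from the degree-concentration computation you outline. This is a legitimate but much more elaborate route than the paper's---you invoke a probabilistic existence argument, Chernoff bounds, and LP duality where the paper uses none of these.

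For $1<r<\infty$, however, there is a genuine gap. You correctly diagnose that the $r=\infty$ construction does not transfer, but your proposed remedy---a ``tensorial combination of the $r=\infty$ construction with an inner product-of-atoms structure''---is only a hope, not an argument: you give no construction, no indication of which tensor factor absorbs which constraint, and no verification that the $\ell^{r'}$ bound on $g$ linearises in the way you need. As written, the lemma is unproved in this range.

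The paper's construction is both simpler and uniform in $1<r\le\infty$. Take $X_m$ to be the dyadic subintervals of $[0,1]$ of length $\ge 2^{-m}$, let the atoms be the $2^m$ root-to-leaf chains $E_I=\{J:I\subseteq J\subseteq[0,1]\}$ with pre-measure $1$, and set $\omega\equiv 1$. Then $f_I=1_{E_I}$ has $\norm{f_I}_{L^1(\ell^r)}=(m+1)^{1/r}$, while $\sum_I f_I=f_m$ with $f_m(J)=2^m\abs{J}$, and a direct layer-cake argument gives $\norm{f_m}_{L^1(\ell^r)}\ge 2^m(m+1)/2$: if $\mu(A)<2^j$ then some scale-$j$ interval $J_0$ lies outside $A$, forcing $\norm{f_m 1_{A^c}}_{L^\infty(\ell^r)}\ge f_m(J_0)=2^{m-j}$. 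The ratio is therefore $\gtrsim(m+1)^{1-1/r}\to\infty$, and the duality failure follows since a uniform bound in $(ii)$ would imply one in $(iii)$. The heavy overlap of chains at coarse scales is precisely the deterministic realisation of the Carleson-packing failure you allude to---no randomness is needed.
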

\begin{proof}
	Let $\mathcal{D}$ be the set of dyadic intervals. For every $m \in \N$, let 
	\begin{align*}
		X_m &= \{ I \in \mathcal{D} \colon I \subseteq [0,1], \abs{I} \geq 2^{-m} \}, \\
		\mathcal{E}_m &= \{ E_I = \{ J \in \mathcal{D} \colon I \subseteq J \subseteq [0,1] \} \colon I \in X_m, \abs{I} = 2^{-m} \}, \\
		\sigma_m(E_I) &=1, \ \ \ \ \text{for every $I \in X_m, \abs{I} = 2^{-m}$,} \\
		\omega_m(J) &=1, \ \ \ \ \text{for every $J \in X_m$,} \\
		f_m(J) &= 2^m \abs{J}, \\
		f_{I}(J) &= 1_{E_{I}}(J), \ \ \ \ \text{for every $I \in X_m, \abs{I} = 2^{-m}$.}
	\end{align*}
	We have
	\begin{align*}
		\norm{\sum_{I \in X_m, \abs{I} = 2^{-m}} f_I}_{L^1(\ell^r)} &= \norm{f_m}_{L^1(\ell^r)}\geq 2^m \frac{m+1}{2}, \\
		\sum_{I \in X_m, \abs{I} \leq 2^{-m}} \norm{f_I}_{L^1(\ell^r)} &= \sum_{I \in X_m, \abs{I} = 2^{-m}} (m+1)^{\frac{1}{r}} = 2^m (m+1)^{\frac{1}{r}}.
	\end{align*}
	For $m$ big enough, we get the second statement. In particular, this yields a counterexample to the uniformity of the constant in the statement of Theorem \ref{thm:collapsing_Holder_triangular_finite}, property $(iii)$. Therefore, also the uniformity of the constant in the statement of Theorem \ref{thm:collapsing_Holder_triangular_finite}, property $(ii)$ does not hold.
\end{proof}

\section{Equivalence with tent spaces}
In this section we prove the equivalence between the outer $L^p(\ell^r)$ spaces in the upper half space setting \eqref{eq:upperspace_continuous_setting} and the tent spaces $T^p_r$ stated in Theorem \ref{thm:equivalence_tent_outer_spaces}. First, in Lemma \ref{thm:tent_to_outer} we prove the equivalence for certain exponents $p,r$. After that, we extend it to the full range $0 < p,r \leq \infty$ via the K\"{o}the duality result for the outer $L^p(\ell^r)$ spaces, equivalent to that stated in Theorem \ref{thm:collapsing_Holder_triangular_upper_half_space}, property $(ii)$, and the analogous result for tent spaces $T^p_r$, stated in Proposition \ref{thm:tent_duality}.

\begin{lemma} \label{thm:tent_to_outer}
	For $p=\infty, 0<r <\infty$ or $0<p<\infty, r = \infty$, there exists a constant $C=C(p,r)$ such that, for every $f \in L^p(\ell^r)$,
	\begin{equation*}
		\frac{1}{C} \norm{f}_{T^p_r} \leq \norm{f}_{L^p(\ell^r)} \leq C  \norm{f}_{T^p_r}. 
	\end{equation*}
\end{lemma}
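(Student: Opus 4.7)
The plan is to treat the two endpoint cases separately. In each of them either $p = \infty$ or $r = \infty$, so one of the two operations (supremum over sets, $L^r$ averaging) degenerates and the equivalence reduces to a direct geometric comparison between the boxes in $\mathcal{E}$ (equivalently $\mathcal{D}$) generating $\mu$ and the tents $T(x,s)$ appearing in the tent-space norm.

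\textbf{Case $p = \infty$, $0 < r < \infty$.} By definition $\norm{f}_{L^\infty(\ell^r)}$ is the supremum of the normalized average $\ell^r(f)(B)$ over boxes $B$, while $\norm{f}_{T^\infty_r}$ is, after the tent-space normalization, the supremum of the same type of quantity over tents $T(x,s)$. I would deduce the equivalence from the elementary observations that every box is contained in a tent over a comparable ball, and every tent over a ball is contained in a bounded union of dyadic boxes of comparable base measure, combined with the equivalence between the $\mathcal{D}$- and $\mathcal{E}$-based outer measures already invoked at the end of Section 3.

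\textbf{Case $0 < p < \infty$, $r = \infty$.} Here $\ell^\infty(f)(A) = \sup_A \abs{f}$ involves no averaging, so $\norm{f 1_{A^c}}_{L^\infty(\ell^\infty)} \leq \lambda$ is equivalent to $\{\abs{f} > \lambda\} \subseteq A$. Consequently the super level measure collapses to
\[
\mu(\ell^\infty(f) > \lambda) = \mu(\{\abs{f} > \lambda\}).
\]
On the tent-space side, $A_\infty(f)(x) = \sup_{(y,t) \in \Gamma(x)} \abs{f(y,t)}$, so $\{A_\infty(f) > \lambda\}$ is exactly the cone-shadow $\mathrm{sh}(E) := \bigcup_{(y,t) \in E} B(y,t)$ of $E = \{\abs{f} > \lambda\}$. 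By the layer-cake representation in both sides, the equivalence of norms thus reduces to the purely geometric estimate
\[
\mu(E) \sim \abs{\mathrm{sh}(E)} \qquad \text{for every } E \subseteq \R^d \times (0,\infty).
\]
The bound $\abs{\mathrm{sh}(E)} \lesssim \mu(E)$ is immediate from subadditivity and the fact that $\mathrm{sh}(B)$ of any box $B \in \mathcal{E}$ is contained in a cube of base measure $\lesssim \sigma(B)$. For the reverse I would decompose the open set $\mathrm{sh}(E)$ as a disjoint union of dyadic cubes $\{Q_j\}$ with $\sum_j \abs{Q_j} \lesssim \abs{\mathrm{sh}(E)}$ (Whitney-type), observe that for each $(y,t) \in E$ the connected ball $B(y,t)$ lies entirely inside $\mathrm{sh}(E)$ and therefore inside a single $Q_j$, so that $(y,t)$ belongs to the tent $T(Q_j)$, and note that $T(Q_j)$ fits into a dyadic box of pre-measure $\sim \abs{Q_j}$.

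\textbf{Main obstacle.} The only non-routine step is arranging the decomposition of $\mathrm{sh}(E)$ so that the resulting tents fit uniformly into the grid $\mathcal{D}$ with uniformly controlled pre-measures, independently of $E$ and $f$; a standard Whitney construction together with the dyadic/continuous comparison recalled at the end of Section 3 should suffice.
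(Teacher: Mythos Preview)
Your approach is essentially the same as the paper's. The paper first reduces via the power trick $\norm{f}_{L^p(\ell^r)}^q=\norm{f^q}_{L^{p/q}(\ell^{r/q})}$ (and similarly for $T^p_r$) to the cases $p=\infty,r=1$ and $p=1,r=\infty$, and then handles each case exactly as you describe: a direct box/tent covering comparison for $p=\infty$, and a level-set comparison based on a Whitney decomposition of $\{A_\infty(f)>\lambda\}$ for $r=\infty$.

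One correction to the Whitney step: the claim that ``the ball $B(y,t)$ lies entirely inside $\mathrm{sh}(E)$ and therefore inside a single $Q_j$'' is not true in general. What you actually need (and what the paper uses) is the Whitney property $\ell(Q_j)\sim\mathrm{dist}(Q_j,\partial\,\mathrm{sh}(E))$: since $B(y,t)\subseteq\mathrm{sh}(E)$ gives $\mathrm{dist}(y,\partial\,\mathrm{sh}(E))\geq t$, the cube $Q_j\ni y$ satisfies $\ell(Q_j)\gtrsim t$, so $(y,t)$ lies in the dyadic box over a fixed dilate of $Q_j$. The paper covers by $10Q_j$ for exactly this reason. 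With this adjustment your argument goes through and matches the paper's.
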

\begin{proof}
	Without loss of generality, it is enough to consider the cases
	\begin{equation} \label{eq:restriction_of_cases_1}
		\begin{gathered}
			p= \infty, r= 1, \\
			p=1, r= \infty.
		\end{gathered}
	\end{equation}
	In fact, let $q<\infty$ be the minimum of $p$ and $r$. We have
	\begin{align*}
		\norm{f}_{T^p_r}^q &= \norm{f^{q}}_{T^{p/q}_{r/q}}, \\
		\norm{f}_{L^p(\ell^r)}^q &= \norm{f^{q}}_{L^{p/q}(\ell^{r/q})},
	\end{align*}
	where $\infty/q=\infty$, thus recovering one of the cases in \eqref{eq:restriction_of_cases_1}.
	
	{\textbf{Case I: $p=\infty, r=1$.}} The quantities associated with the spaces $L^\infty(\ell^1),T^\infty_1$ are equivalent by definition, up to a constant determined by a simple covering argument between boxes and tents.
	
	{\textbf{Case II: $p=1, r = \infty$.}} Let $f \in L^1(\ell^\infty)$. For every $\lambda > 0$, let $\mathcal{E}_\lambda \subseteq \mathcal{E}$ be a covering witnessing the super level measure at level $\lambda$ up to a factor $2$. In particular, we have
	\begin{equation*}
		2 \mu(\ell^\infty(f)>\lambda) \geq  \sum_{E \in \mathcal{E}_\lambda} \sigma(E).
	\end{equation*}
	For
	\begin{equation*}
		B_\lambda = \bigcup_{\mathcal{E}_\lambda} 10 B(E) \subseteq \R^d,
	\end{equation*}
	where $10B$ is the cube in $\R^d$ with the same centre of $B$ and $10$ times its side length, we have
	\begin{equation*}
		\abs{B_\lambda} \leq C \sum_{E \in \mathcal{E}_\lambda} \sigma(E) \leq C \mu(\ell^\infty(f)>\lambda).
	\end{equation*}
	Moreover, for every $x \in B_\lambda^c$, we have
	\begin{equation*}
		A_\infty (f) (x) \leq \lambda,
	\end{equation*}
	otherwise we get a contradiction with the definition of $\mathcal{E}_\lambda$. Therefore, we have
	\begin{equation} \label{eq:first_inequality}
		\abs{ \{ x \in \R^d \colon A_\infty (f)(x) > \lambda \} } \leq C \mu(\ell^\infty(f) > \lambda).
	\end{equation}
	
	Now let $f \in T^p_\infty$. For every $\lambda > 0$, let $ D_\lambda$ be
	\begin{equation*}
		D_\lambda = \{ x \in \R^d \colon A_\infty (f)(x) > \lambda \},
	\end{equation*}
	and define
	\begin{equation*}
		E_\lambda = \bigcup_{i \in I_\lambda} 10 Q_i \subseteq X,
	\end{equation*}
	where $\{B(Q_i)\}$ is a Whitney decomposition of $D_\lambda$, and $10 Q$ is the box whose base $B(10Q)$ has the same centre of $B(Q)$ and $10$ times its side length. In particular, we have
	\begin{equation*}
		\mu(E_\lambda) \leq C \abs{D_\lambda}.
	\end{equation*}
	Moreover, for every $E \in \mathcal{E}$, we have
	\begin{equation*}
		\ell^\infty(f 1_{E_\lambda^c})(E) \leq \lambda,
	\end{equation*}
	otherwise we get a contradiction with the definition of $D_\lambda$. Therefore, we have
	\begin{equation} \label{eq:second_inequality}
		\mu(\ell^\infty(f) > \lambda) \leq C \abs{ \{ x \in \R^d \colon A_\infty (f)(x) > \lambda \} }.
	\end{equation}
	
	The desired equivalence follows by integrating the inequalities \eqref{eq:first_inequality}, \eqref{eq:second_inequality} over all levels $\lambda >0$. 
\end{proof}

For the tent spaces $T^p_r$ we have the following K\"{o}the duality result, see for example Theorem 5.2 in \cite{MR3473649}.
\begin{proposition} \label{thm:tent_duality}
	For $1 \leq p , r \leq \infty$, for every $f \in T^p_r$,
	\begin{equation*}
		\sup_{\norm{g}_{T^{p'}_{r'}} = 1} \norm{fg}_{L^1(X, \omega)} \leq \norm{f}_{T^p_r} \leq \sup_{\norm{g}_{T^{p'}_{r'}} = 1} \norm{fg}_{L^1(X, \omega)}. 
	\end{equation*}
\end{proposition}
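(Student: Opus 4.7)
The plan is to prove the K\"othe duality for $T^p_r$ by the standard Coifman-Meyer-Stein scheme, in three stages. The upper bound $\norm{f}_{T^p_r} \geq \sup_{\norm{g}_{T^{p'}_{r'}}=1} \norm{fg}_{L^1(X,\omega)}$ is the tent-space H\"older inequality. For $1 \leq p < \infty$ it comes from the Fubini identity
\begin{equation*}
\int_X \abs{fg}\, \omega \;=\; c_d \int_{\R^d} A_1(fg)(x)\, dx,
\end{equation*}
based on $\abs{\{x \in \R^d : (y,t) \in \Gamma(x)\}} = c_d t^d$, combined with the pointwise cone H\"older inequality $A_1(fg)(x) \leq A_r(f)(x) A_{r'}(g)(x)$ and classical H\"older on $(\R^d, dx)$. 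The case $p=\infty$ requires an additional covering argument, comparing the cone functional $A_r$ on $g$ with the tent functional $C_r$ on $f$ over a tent containing the relevant cones up to a fixed dilation.

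The reverse inequality in the main range $1 < p < \infty$, $1 \leq r \leq \infty$, I would prove by constructing an explicit dual function. Following Coifman-Meyer-Stein, take
\begin{equation*}
g(y,t) = c\, \abs{f(y,t)}^{r-1}\, U_f(y,t)^{p-r}, \qquad U_f(y,t) = \frac{1}{\abs{B(y,t)}} \int_{B(y,t)} A_r(f)(x)^{p-r}\, dx,
\end{equation*}
with $c$ chosen so that $\norm{g}_{T^{p'}_{r'}} = 1$. The verification that $\norm{g}_{T^{p'}_{r'}} \leq C \norm{f}_{T^p_r}^{p-1}$ and $\norm{fg}_{L^1(X,\omega)} \geq C^{-1} \norm{f}_{T^p_r}^p$ rests on the classical change-of-aperture lemma, namely that the $L^p(\R^d)$-norms of cone averages $A_r^\alpha(f)$ are equivalent for different apertures $\alpha$; this lets one unfold the Fubini identity above and recognize the integrand as essentially $A_r(f)(x)^p$. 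For the diagonal $p = r$ the identity $T^p_p = L^p(X,\omega)$, again a consequence of Fubini, reduces the statement to classical Lebesgue duality, which covers $r \in \{1, \infty\}$ through the homogeneity trick $\norm{f}_{T^p_r}^q = \norm{\abs{f}^q}_{T^{p/q}_{r/q}}$ already used in Lemma \ref{thm:tent_to_outer}.

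The endpoints $p = 1$ and $p = \infty$ I would handle separately. For $p = 1$ I would use the atomic decomposition of $T^1_r$: write $f = \sum_k \lambda_k a_k$ with $\sum_k \abs{\lambda_k} \leq C \norm{f}_{T^1_r}$ and each atom $a_k$ supported in a tent $T(x_k, s_k)$ with the appropriate $r$-size normalization. Pairing against any $g \in T^\infty_{r'}$ of unit norm then reduces to a single H\"older pairing inside each tent, summed against $\sum_k \abs{\lambda_k}$. The case $p = \infty$ follows by a dual stopping-time/Whitney argument modelled on the one used in Lemma \ref{thm:tent_to_outer}, with the level set $\{A_\infty(f) > \lambda\}$ replaced by $\{A_{r'}(g) > \lambda\}$. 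I expect the main obstacle to be the endpoint $p = 1$ with $r > 1$: the naive dual candidate $\abs{f}^{r-1}$ fails to lie in $T^\infty_{r'}$, and the atomic decomposition (equivalently, a Whitney-type tent construction analogous to Proposition \ref{thm:atomic_decomposition_upperspace_dyadic}) is essential to produce a valid dual element with the correct norm control.
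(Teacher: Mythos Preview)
The paper does not give its own proof of this proposition; it simply cites Theorem~5.2 in \cite{MR3473649}. Your proposal outlines the classical Coifman--Meyer--Stein scheme, which is indeed the standard route and essentially what the cited literature carries out, so in that sense there is nothing to compare against.

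On the content of your sketch: the main range $1<p<\infty$ and the H\"older direction are fine. Your description of the $p=1$ endpoint, however, is muddled. What you wrote---``pairing against any $g\in T^\infty_{r'}$ of unit norm then reduces to a single H\"older pairing inside each tent, summed against $\sum_k\abs{\lambda_k}$''---is the H\"older (upper-bound) direction again, not the reverse inequality. The atomic decomposition does enter for $p=1$, but its role is to identify the Banach dual $(T^1_r)^*\cong T^\infty_{r'}$ (every bounded functional on $T^1_r$ is represented by pairing with some $g\in T^\infty_{r'}$, with norm control); Hahn--Banach then supplies a norming $g$ for any fixed $f$. You cannot build the dual $g$ atom-by-atom, since normalizing $\abs{a_k}^{r-1}$ on each tent does not glue to a single function with bounded $T^\infty_{r'}$ norm. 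For $p=\infty$ the reverse inequality is actually simpler than your stopping-time proposal: choose a single tent $T(B)$ nearly achieving $\norm{f}_{T^\infty_r}$ and take $g=\abs{f}^{r-1}1_{T(B)}$, suitably normalized; no Whitney argument is required.
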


\begin{proof} [Proof of Theorem \ref{thm:equivalence_tent_outer_spaces}]
	Without loss of generality, it is enough to consider the cases
	\begin{equation*} \label{eq:restriction_of_cases_2}
		\begin{gathered}
			p=r=\infty, \\
			1 < p \leq \infty, r= 1, \\
			p=1, 1 \leq r \leq \infty,
		\end{gathered}
	\end{equation*}
	due to an argument analogous to that in the previous proof.
	
	{\textbf{Case I: $p=r=\infty$.}} The equivalence between $L^\infty(\ell^\infty), T^\infty_\infty$ follows by definition.
	
	{\textbf{Case II: $1 < p \leq \infty, r= 1$.}} For $p=\infty$ the quantities associated with the spaces $L^\infty(\ell^1),T^\infty_1$ are equivalent by Lemma \ref{thm:tent_to_outer}.
	
	For $1 < p < \infty$, let $f \in L^p(\ell^1)$. By Theorem \ref{thm:collapsing_Holder_triangular_upper_half_space}, property $(ii)$, we have
	\begin{equation*}
		\frac{1}{C} \sup_{\norm{g}_{L^{p'}(\ell^\infty)} \leq 1} \norm{fg}_{L^1(X,  \omega)} \leq \norm{f}_{L^p(\ell^1)} \leq C \sup_{\norm{g}_{L^{p'}(\ell^\infty)} \leq 1} \norm{fg}_{L^1(X,  \omega)}.
	\end{equation*}
	Applying Lemma \ref{thm:tent_to_outer} to $g$, we have
	\begin{equation*}
		\frac{1}{C} \sup_{\norm{g}_{T^{p'}_\infty} \leq 1} \norm{fg}_{L^1(X,  \omega)} \leq \norm{f}_{L^p(\ell^1)} \leq C \sup_{\norm{g}_{T^{p'}_\infty} \leq 1} \norm{fg}_{L^1(X,  \omega)}.
	\end{equation*}
	Finally, by Proposition \ref{thm:tent_duality}, we conclude
	\begin{equation*}
		\frac{1}{C} \norm{f}_{T^p_1} \leq \norm{f}_{L^p(\ell^1)} \leq C \norm{f}_{T^p_1}.
	\end{equation*} 
	
	{\textbf{Case III: $p=1,1 \leq r \leq \infty$.}} For $p=1, r=\infty$, the quantities associated with the spaces $L^1(\ell^\infty),T^1_\infty$ are equivalent by Lemma \ref{thm:tent_to_outer}.
	
	For $p=1, 1 \leq r <\infty$, an argument analogous to that used to prove Case II yields the desired equivalence. If $p=r=1$, we use Case I in place of Lemma \ref{thm:tent_to_outer}.
	
	To conclude, we observe that the set of bounded functions with compact support in $X$ is dense in $T^p_r$ for $1 \leq p < \infty, r=1$ and $p=1, 1 \leq r < \infty$. However, these functions are also in $L^p(\ell^r)$. Therefore, the two spaces coincide.
\end{proof}

\section{Hardy-Littlewood-Sobolev inclusions for tent spaces}
In this section we improve over a result of Amenta on continuous inclusions between tent spaces $T^p_r$, see Theorem 2.19 and Lemma 2.20 in \cite{MR3750310}. In his notation, we have the weighted tent spaces $T^{p,r}_s$ defined, for $0<p,r \leq \infty, s \in \R$, by
\begin{equation*}
	T^{p,r}_s = \{ f \colon t^{-ds} f \in T^p_r \}, \ \ \ \ \norm{f}_{T^{p,r}_s}= \norm{t^{-ds} f}_{T^{p}_{r}},
\end{equation*}
where $T^p_r$ is defined in \eqref{eq:tent_space_1} and \eqref{eq:tent_space_2}, and the continuous inclusions
\begin{equation*}
	T^{p,r}_0 \hookrightarrow T^{q,r}_{\frac{1}{q}-\frac{1}{p}}, f \mapsto f,
\end{equation*}
for $0<p<q \leq \infty, 0 < r \leq \infty$. The improvement consists of allowing for two different values of $r$, under certain conditions, in each of the two spaces in the last display.

Due to the equivalence proved in the previous section, we get an analogous result for the outer $L^p(\ell^r)$ spaces in the upper half space setting \eqref{eq:upperspace_continuous_setting}. This result is auxiliary in proving strong type estimates in the following section.
\begin{theorem} \label{thm:inclusion_tent_spaces}
	For $0 < p < q \leq \infty, 0 < r_2 \leq r_1 \leq \infty$, there exists a constant $C=C(p,q,r_1,r_2)$ such that, for every $f \in T^p_{r_1}$,
	\begin{equation*}
		\norm{t^{\frac{d}{p}-\frac{d}{q}} f}_{T^q_{r_2}} \leq C \norm{f}_{T^p_{r_1}}.
	\end{equation*}
	Moreover, for every $f \in L^p(\ell^{r_1})$,
	\begin{equation*}
		\norm{t^{\frac{d}{p}-\frac{d}{q}} f}_{L^q(\ell^{r_2})} \leq C \norm{f}_{L^p(\ell^{r_1})}.
	\end{equation*}
\end{theorem}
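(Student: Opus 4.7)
By Theorem \ref{thm:equivalence_tent_outer_spaces} the two displayed inequalities are equivalent, so I would work with the outer $L^p(\ell^r)$ formulation. Set $\alpha := d/p - d/q > 0$.

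The key technical input is a pointwise size inequality: for every nonnegative measurable $h$ on $X$ and every dyadic box $A \in \mathcal{D}$ of side $s$,
\[
\ell^{r_2}(t^{\alpha} h)(A) \leq C\, s^{\alpha}\, \ell^{r_1}(h)(A).
\]
This follows from H\"older's inequality with exponents $r_1/r_2$ and $r_1/(r_1-r_2)$ (using $r_2 \leq r_1$) applied to the integrand $h^{r_2} t^{\alpha r_2 - 1}$, written as $(h^{r_1} t^{-1})^{r_2/r_1} \cdot t^{r_2/r_1 + \alpha r_2 - 1}$; the second factor integrates to $\lesssim s^\delta$ over $A$ with $\delta := \alpha r_1 r_2/(r_1 - r_2) > 0$, and the scales balance to produce $s^\alpha$ after simplification. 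This is where the hypothesis $r_2 \leq r_1$ is exploited and the improvement over Amenta's $r_1 = r_2$ case is realized.

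Next, I would apply Proposition \ref{thm:atomic_decomposition_upperspace_dyadic} with size $\ell^{r_1}$ to $f \in L^p(\ell^{r_1}) \cap L^\infty(\ell^{r_1})$, producing dyadic boxes $\{E_{k,n}\}$ of side $s_{k,n}$ and pieces $f_{k,n} := f\,1_{E_{k,n}\setminus F_{k,n-1}}$, each supported in $E_{k,n}$ with $\ell^{r_1}$-size on every box bounded by $2^{k+1}$ (since $F_{k,n-1}^c \subseteq F_{k+1}^c$). Combining the pointwise estimate with the support localization $\supp(f_{k,n}) \subseteq E_{k,n}$ and the outer H\"older inequality $\|h\|_{L^q(\ell^{r_2})} \leq \|h\|_{L^\infty(\ell^{r_2})}\, \mu(\supp(h))^{1/q}$ produces the atomic bound
\[
\|t^{\alpha} f_{k,n}\|_{L^q(\ell^{r_2})} \leq C\, 2^k\, s_{k,n}^{\alpha + d/q} = C\, 2^k\, \sigma(E_{k,n})^{1/p}.
\]

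For $p \leq 1$, the (quasi-)triangle inequality in $L^q(\ell^{r_2})$ combined with the embedding $(\sum a_i^{1/p})^p \leq \sum a_i$ (or its $q/p$-analog when $q < 1$) and property \eqref{eq:optimal_covering_upperspace_dyadic} closes the estimate:
\[
\|t^\alpha f\|_{L^q(\ell^{r_2})} \lesssim \Big(\sum_{k,n} 2^{kp}\sigma(E_{k,n})\Big)^{1/p} \lesssim \|f\|_{L^p(\ell^{r_1})}.
\]
For $1 < p < \infty$ direct summation is too lossy, and I would instead interpolate on the line $1/p - 1/q = \alpha/d$ between the endpoint $(p,q) = (1, d/(d-\alpha))$ (handled above) and the endpoint $(p,q) = (d/\alpha, \infty)$, with the latter obtained from the former via the K\"othe duality in Theorem \ref{thm:collapsing_Holder_triangular_upper_half_space}(ii) applied to the pairing $\int t^\alpha f g\, \omega = \int f (t^\alpha g)\, \omega$ (which turns HLS with parameters $(p,q,r_1,r_2)$ into its dual with parameters $(q',p',r_2',r_1')$, preserving the structural hypothesis $r_2 \leq r_1$). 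The outer real interpolation from \cite{MR3312633} then fills in all intermediate $(p,q)$. The main obstacle I anticipate is the bookkeeping in this duality--interpolation step for $1 < p < \infty$, in particular ensuring that the scaling line is covered and that the argument extends to the corners where $r_1$ or $r_2$ falls below $1$; a standard truncation/density argument removes the a priori assumption $f \in L^\infty(\ell^{r_1})$.
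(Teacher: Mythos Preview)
Your route is genuinely different from the paper's. The paper stays in the tent-space world: it proves Lemma~\ref{thm:atomic_tent_spaces} (a $T^1_{r_1}$-atom lands in $T^q_{r_2}$, using H\"older exactly as in your size inequality) and then simply invokes Amenta's argument for Theorem~2.19 in \cite{MR3750310}, substituting this lemma for his Lemma~2.20. In particular, the paper never touches Proposition~\ref{thm:atomic_decomposition_upperspace_dyadic}, duality, or interpolation here; all of the ``how to sum the atoms in the full parameter range'' work is outsourced to Amenta's framework. Your plan instead rebuilds that summation from the paper's own outer-space decomposition.

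Your pointwise size inequality and the atomic bound $\|t^\alpha f_{k,n}\|_{L^q(\ell^{r_2})}\lesssim 2^k\sigma(E_{k,n})^{1/p}$ are correct and are essentially the outer-space translation of Lemma~\ref{thm:atomic_tent_spaces}. But there are real gaps in the summation. First, the pieces $f_{k,n}=f\,1_{E_{k,n}\setminus F_{k,n-1}}$ do \emph{not} exhaust $f$: the regions $F_{k,0}\setminus F_{k+1}=\bigcup_i (Q_i\setminus F_{k+1})$ are missing (compare the two sums in the paper's proof of Theorem~\ref{thm:collapsing_Holder_triangular_upper_half_space}(i)). These extra pieces obey the same atomic bound and $\sum_i\sigma(Q_i)\le 2\sum_{l>k,m}\sigma(E_{l,m})$, so the fix is routine, but it must be written. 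Second, for $p\le 1$ you need a $\rho$-triangle inequality in $L^q(\ell^{r_2})$ with $\rho\ge p$; via the tent-space equivalence this gives $\rho=\min(q,r_2,1)$, which fails when $r_2<p$. Third, and most seriously, your $1<p<\infty$ step leans on two tools that are not available as stated: Theorem~\ref{thm:collapsing_Holder_triangular_upper_half_space}(ii) requires $1\le r_1,r_2\le\infty$ for the duality, and Proposition~\ref{thm:Marcinkiewicz_interpolation} has a \emph{classical} $L^p(Y,\nu)$ as domain, not an outer $L^p(\ell^{r_1})$, so ``outer real interpolation from \cite{MR3312633}'' does not apply to the map $f\mapsto t^\alpha f$. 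You would need tent-space interpolation (external to this paper) to close that step. The paper sidesteps all three issues by citing Amenta's machinery, which already handles the full range $0<p<q\le\infty$, $0<r\le\infty$.
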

The main ingredient is the following. We define a function $a$ to be a $T^p_{r}$-atom associated with the ball $B \subseteq \R^d$ if $a$ is essentially supported in $T(B)$ and
\begin{equation} \label{eq:atomsizecondition}
	\norm{a}_{T^{r}_{r}} \leq \abs{B}^{\frac{1}{r}-\frac{1}{p}}.
\end{equation}

\begin{lemma} \label{thm:atomic_tent_spaces}
	Let $1 < q \leq r_2 \leq r_1 \leq \infty$. Suppose that $a$ is a $T^1_{r_1}$-atom. Then $a$ is in $T^q_{r_2}$ with norm smaller than 1.
\end{lemma}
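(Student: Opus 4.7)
The plan is to localize the area function $A_{r_2}(a)$ to $B$ via the tent support of $a$, exchange the $L^q$ norm for an $L^{r_2}$ norm on $\R^d$ via Hölder (using $q\leq r_2$), and reduce the remaining $L^{r_2}(X,\omega)$ estimate to the atom's $T^{r_1}_{r_1}$ bound by a scale-by-scale Hölder in the upper half space. First I would verify the support localization: if $(y,t)\in\Gamma(x)\cap T(B)$, then $\abs{x-x_B}\leq\abs{x-y}+\abs{y-x_B}<t+(r_B-t)=r_B$, so $x\in B$. Hence $A_{r_2}(a)$ is supported in $B$, and since $q\leq r_2$, Hölder on $B$ with Lebesgue measure yields
\[\norm{a}_{T^q_{r_2}}=\norm{A_{r_2}(a)}_{L^q(B)}\leq\abs{B}^{1/q-1/r_2}\norm{A_{r_2}(a)}_{L^{r_2}(\R^d)}=c\,\abs{B}^{1/q-1/r_2}\norm{a}_{L^{r_2}(X,\omega)},\]
using the identification $T^{r_2}_{r_2}\simeq L^{r_2}(X,\omega)$ already exploited in Section 3.

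The crucial step is then to show $\norm{a}_{L^{r_2}(X,\omega)}\leq c\,\abs{B}^{1/r_2-1/r_1}\norm{a}_{L^{r_1}(X,\omega)}$. Because $\omega(T(B))=\infty$, a single Hölder inequality on all of $T(B)$ is unavailable; instead I would decompose $T(B)=\bigsqcup_{k\geq 0}S_k$ with $S_k=T(B)\cap\{2^{-k-1}r_B\leq t<2^{-k}r_B\}$, observe that $\omega(S_k)\lesssim\abs{B}$ uniformly in $k$, apply Hölder inside each slab, and reassemble using the disjointness of the $S_k$ in $t$. Combining with the atom size condition $\norm{a}_{T^{r_1}_{r_1}}\leq\abs{B}^{1/r_1-1}$ then gives
\[\norm{a}_{T^q_{r_2}}\leq c\,\abs{B}^{1/q-1/r_1}\norm{a}_{T^{r_1}_{r_1}}\leq c\,\abs{B}^{1/q-1}\leq c,\]
since $q\geq 1$.

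The main obstacle is precisely the scale-by-scale reassembly in the third step: with $r_2\leq r_1$, the naive $\ell^{r_2}$-summation of the slab contributions is larger than what the atom's $\ell^{r_1}$-controlled energy directly provides, so one needs to leverage either the specific cone-tent geometry (coupling cones across different scales) or the concavity $q/r_2\leq 1$ at the $L^q$-integration step over $B$ to produce a uniform bound independent of the number of occupied dyadic scales.
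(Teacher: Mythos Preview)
Your proposal has a genuine gap, and it stems from reading the lemma too literally: the paper's proof (and its use in Theorem \ref{thm:inclusion_tent_spaces}) actually establishes
\[
\norm{t^{d-\frac{d}{q}}a}_{T^q_{r_2}}\leq 1,
\]
not $\norm{a}_{T^q_{r_2}}\leq 1$. The fractional scale factor $t^{d-d/q}$ is not cosmetic; without it the statement is false and your step~3 cannot be salvaged. Indeed, since $\omega(T(B))=\infty$, the inclusion $L^{r_1}(T(B),\omega)\subseteq L^{r_2}(T(B),\omega)$ fails for $r_2<r_1$: take $d=1$, $|B|=1$, and $a(y,t)=(\log(1/t))^{-\beta}1_{\{|y-1/2|<1/8,\;0<t<1/4\}}$ with $1/r_1<\beta<1/r_2$. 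Then $\norm{a}_{L^{r_1}(X,\omega)}<\infty$ while $\norm{a}_{L^{r_2}(X,\omega)}=\infty$, so no slab-by-slab reassembly can produce the inequality you seek. For the same $a$ one checks $A_{r_2}(a)(x)=\infty$ on a set of positive measure, so the unweighted $T^q_{r_2}$ norm is infinite as well.

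The paper's argument exploits the weight directly. One applies H\"older \emph{inside each cone} with exponents $(r_1,r)$, $1/r+1/r_1=1/r_2$, to split
\[
A_{r_2}(t^{d-\frac{d}{q}}a)(x)\leq A_{r_1}(a)(x)\,A_r\bigl(t^{d-\frac{d}{q}}1_{T(B)}\bigr)(x),
\]
and then H\"older on $B$ with exponents $(r_1,s)$, $1/s+1/r_1=1/q$. The point is that $A_r(t^{d-d/q}1_{T(B)})(x)$ is finite and of order $|B|^{1-1/q}$: the integral $\int_0^{r_B}t^{(d-d/q)r}\,t^{-1}\,dt$ converges at $0$ exactly because $q>1$. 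This is where both hypotheses $q>1$ and the presence of the weight enter; your outline uses neither, which is why it stalls.
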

\begin{proof}
	For $q<\infty$, let $0 < r,s \leq \infty$ be such that 
	\begin{equation*}
		\frac{1}{r}+\frac{1}{r_1}=\frac{1}{r_2}, \ \ \ \ \frac{1}{s}+\frac{1}{r_1}=\frac{1}{q}.
	\end{equation*} 
	We have
	\begin{align*}
		\norm{t^{d-\frac{d}{q}} a}_{T^q_{r_2}} &= \norm{A_{r_2}(t^{d-\frac{d}{q}} a)}_{L^q(B)} \\
		& \leq \norm{A_{r}(t^{d-\frac{d}{q}} 1_{T(B)}) A_{r_1}(a)}_{L^q(B)} \\
		& \leq \norm{A_{r}(t^{d-\frac{d}{q}} 1_{T(B)})}_{L^s(B)} \norm{ A_{r_1}(a)}_{L^{r_1}(B)} \\
		& \leq \abs{B}^{1-\frac{1}{r_1}} \norm{a}_{T^{r_1}_{r_1}} \\
		& \leq 1,
	\end{align*}
	where we used H\"{o}lder's inequality in the first and in the second inequality, and \eqref{eq:atomsizecondition} in the fourth.
	
	For $q=r_2=r_1=\infty$, the statement follows directly from \eqref{eq:atomsizecondition}.
\end{proof}

\begin{proof} [Proof of Theorem \ref{thm:inclusion_tent_spaces}]
	The proof of the first statement follows along the lines of that of Theorem 2.19 in \cite{MR3750310}, using Lemma \eqref{thm:atomic_tent_spaces} above in place of Lemma 2.20.
	
	The second statement then follows by Theorem \ref{thm:equivalence_tent_outer_spaces}.
\end{proof}

\section{Embedding into outer $L^p(\ell^r)$ spaces with a fractional scale factor}
In this section we state and prove a full classification of all positive and negative results regarding strong and weak type estimates for a family of embedding maps with a fractional scale factor from classical $L^p$ spaces on $\R^d$ to outer $L^p(\ell^r)$ spaces in the upper half space setting.

The positive results for $d=1, 1 \leq p=q \leq \infty,r=\infty$ were already proved in \cite{MR3312633}, see Theorem 4.1. Although there $\phi$ was assumed to be smooth and compactly supported, the same argument can be extended with minor adjustments to the test functions satisfying the boundedness and decay condition \eqref{eq:decay_condition} and to all dimensions. 

We conclude the section by stating and proving an embedding theorem with a fractional scale factor for functions in the Hardy space $H^1(\R^d)$ into the outer $L^1(\ell^\infty)$ space. The embedded function in this case is that defined in \eqref{eq:embedding_map} for a smooth test function $\phi \in \mathcal{S}(\R^d)$.

\begin{theorem} \label{thm:embedding}
	Let 
	\begin{equation} \label{eq:zerocase}
		1  \leq p,q \leq \infty,  0< r \leq \infty.
	\end{equation}
	Then, for $(p,q,r)$ satisfying one of the following conditions, which are also displayed in Fig. 1 below,
	\begin{equation}  \label{eq:case}
		\begin{gathered}
			1 < p < q \leq \infty, 0 < r \leq \infty, \\
			1 < p = q \leq \infty, r = \infty, \\
			p = 1, q = \infty, 0<r\leq \infty, 
		\end{gathered}
	\end{equation}
	there exists a constant $C=C(p,q,r,d,\varepsilon)$ such that, for every $f \in L^p(\R^d)$,
	\begin{equation*} \label{eq:strong_embedding}
		\norm{t^{\frac{d}{p}-\frac{d}{q}} F(f)}_{L^{q}(\ell^r)} \leq C \norm{f}_{L^p(\R^d)}.
	\end{equation*}
	For all the triples $(p,q,r)$ satisfying \eqref{eq:zerocase} but none of the conditions in \eqref{eq:case}, no strong type $(p,q)$ estimate holds.
	
	Moreover, for $(p,q,r)$ satisfying one of the following conditions, which are also displayed in Fig. 1 below,
	\begin{equation} \label{eq:case_weak}
		\begin{gathered}
			1=p < q < \infty, 0 < r \leq \infty, \\
			p= q = 1, r = \infty, 
		\end{gathered}
	\end{equation}
	there exists a constant $C=C(q,r,d,\varepsilon)$ such that, for every $f \in L^1(\R^d)$,
	\begin{equation*} \label{eq:weak_embedding}
		\norm{ F(f)}_{L^{q, \infty}(\ell^r)} \leq C \norm{f}_{L^1(\R^d)}.
	\end{equation*}
	For all the triples $(p,q,r)$ satisfying \eqref{eq:zerocase} but none of the conditions in \eqref{eq:case},\eqref{eq:case_weak}, no weak type $(p,q)$ estimate holds.
\end{theorem}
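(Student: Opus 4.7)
The plan is to follow a two-step program for the positive results: first establish a base embedding $L^p(\R^d) \to L^p(\ell^\infty)$, then apply the Hardy-Littlewood-Sobolev inclusion of Theorem \ref{thm:inclusion_tent_spaces} to descend in both exponents simultaneously. The negative results are obtained by explicit counterexamples.

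For the positive strong type estimates, I would first extend Theorem 4.1 of \cite{MR3312633} to obtain
$\norm{F(f)}_{L^p(\ell^\infty)} \leq C \norm{f}_{L^p(\R^d)}$
for $1 < p \leq \infty$ in every dimension, using the pointwise control $F(f)(y, t) \leq C \mathcal{M} f(y)$ (immediate from the decay condition \eqref{eq:decay_condition}) together with the classical $L^p$ boundedness of the Hardy-Littlewood maximal function $\mathcal{M}$; the case $p = q = \infty$ is then immediate. Applying Theorem \ref{thm:inclusion_tent_spaces} with $r_1 = \infty$, $r_2 = r$ yields the strong type $(p, q, r)$ for $1 < p < q \leq \infty, 0 < r \leq \infty$. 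The remaining endpoint $(1, \infty, r)$ is handled directly: for $r = \infty$ a pointwise bound $\abs{t^d F(f)(y,t)} \leq C \norm{f}_{L^1}$ suffices, and for $1 \leq r < \infty$ Young's convolution inequality $\norm{F(f)(\cdot, t)}_{L^r(\R^d)} \leq C t^{-d(r-1)/r} \norm{f}_{L^1}$ substitutes into the $\ell^r$ average over a box, where the $t^{-1}$ weight integrates cleanly against $t^{rd-1}$; the range $0 < r < 1$ reduces by monotonicity.

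For the positive weak type estimates, the base case $(1, 1, \ell^\infty)$ is proved by a Calder\'on-Zygmund decomposition of $f$ at level $\lambda$: the good part is handled by the strong type $(2, 2, \ell^\infty)$ and Chebyshev, while $F$ of the bad part is concentrated on the outer set given by tents over the Calder\'on-Zygmund cubes, of measure $\lesssim \norm{f}_{L^1}/\lambda$. The full weak range $(1, q, \ell^r)$ with $1 < q < \infty, 0 < r \leq \infty$ then follows by combining weak $(1, 1, \ell^\infty)$ and strong $(1, \infty, \ell^r)$ via real interpolation in the outer $L^p(\ell^r)$ scale, equivalently via a weak-type version of Theorem \ref{thm:inclusion_tent_spaces} obtained by decomposing $F(f) \in L^{1,\infty}(\ell^\infty)$ into dyadic level-set pieces in $L^p(\ell^\infty)$ and summing the corresponding strong Hardy-Littlewood-Sobolev inclusions.

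The negative results are established by explicit counterexamples exploiting the geometry of the setting and the weight $\omega(y,t) = t^{-1}$: the range $p > q$ is ruled out by a standard dilation argument $f_\delta(x) = f(\delta x)$ with $\delta \to 0$ or $\delta \to \infty$; the failures at $1 \leq p = q \leq \infty$ with $r < \infty$ (strong) and $p = q = 1$ with $r < \infty$ (weak) follow from sequences of concentrating test functions paired with a $\phi$ having nonzero integral, since then $F(f)$ does not decay as $t \to 0$ and the $t^{-1}$ weight makes the $\ell^r$ averages blow up faster than any linear multiple of $\norm{f}_{L^p}$; and the failure of strong type $(1, 1, \ell^\infty)$ is the classical failure of $\mathcal{M}$ on $L^1$. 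The main technical obstacle is the weak-type Hardy-Littlewood-Sobolev step: standard Marcinkiewicz does not apply directly here (both relevant source exponents equal $1$), so the level-set decomposition of $L^{1, \infty}(\ell^\infty)$ and the uniform control of the resulting sum of strong inclusions from Theorem \ref{thm:inclusion_tent_spaces} is where most of the care is required.
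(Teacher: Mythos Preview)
Your overall plan for the positive strong-type estimates matches the paper's: the base embedding $L^p\to L^p(\ell^\infty)$ followed by Theorem~\ref{thm:inclusion_tent_spaces} handles the open range $1<p<q\le\infty$, and the endpoint $(1,\infty,r)$ is treated separately. Two steps, however, do not go through as written.

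The dilation argument for ruling out $p>q$ fails because the estimate is \emph{exactly} scale-invariant. A direct computation gives $F(f_\delta)(y,t)=F(f)(\delta y,\delta t)$, and the factor $t^{d/p-d/q}$ together with the scaling of the outer $L^q(\ell^r)$ quasi-norm (the outer measure scales by $\delta^{-d}$, the size $\ell^r$ is dilation-invariant) produces precisely the factor $\delta^{-d/p}$ on both sides; no choice of $\delta$ yields a contradiction. The paper instead takes $f=\phi=1_{(-1,1)^d}$ and observes that for $q<p$ the factor $t^{d/p-d/q}$ diverges as $t\to 0$ where $F_\phi(f)\ge 1$, so the $\ell^r$ size on every small box is already infinite, and this forces $\mu(\ell^r(\cdot)>\lambda)\ge C>0$ for all $\lambda$. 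Separately, your reduction of the $(1,\infty,r)$ endpoint for $0<r<1$ ``by monotonicity'' is not available: $\ell^r$ is an $L^r$ norm with respect to $\mu(E)^{-1}\omega$, which has \emph{infinite} total mass on every box, so there is no inequality $\ell^r\le\ell^1$ for $r<1$, and Young's convolution inequality also fails below $r=1$. The paper covers this case by the H\"older splitting $t^d F(f)=t^{d-1/2}F(f)\cdot t^{1/2}$ with exponents $(1,\tfrac{r}{1-r})$, using that $t^{1/2}$ has finite $\ell^{r/(1-r)}$ size on a box.

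For the weak-type estimates your route (Calder\'on--Zygmund for $(1,1,\ell^\infty)$, then a weak-type Hardy--Littlewood--Sobolev step) is genuinely different from the paper's, and the step you flag as the main obstacle is indeed one: with both source exponents equal to $1$, nothing off the shelf applies, and the level-set decomposition you sketch would have to be carried out carefully. The paper avoids this entirely by a direct construction valid simultaneously for all $1\le q<\infty$ and $0<r\le\infty$: for each $\lambda$ it sets $D_\lambda=\{Mf>\lambda^q\norm{f}_{L^1}^{1-q}\}$, takes $E_\lambda$ to be the union of the dyadic boxes over a Whitney cover of $D_\lambda$, gets $\mu(E_\lambda)\le C\lambda^{-q}\norm{f}_{L^1}^q$ from the weak $(1,1)$ bound for $M$, and then reduces the remaining size estimate to the one-dimensional bound $\norm{t^{d-d/q}F(f)}_{L^r(\{x\}\times(0,\infty),\,dt/t)}\le C\lambda$ for $x\notin D_\lambda$, proved by splitting at an optimized height $R(x)$ and using $F(f)\le CMf$ below and $F(f)\le Ct^{-d}\norm{f}_{L^1}$ above. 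This is both shorter and handles all $r$ at once, whereas your approach would need separate arguments and new interpolation lemmas.
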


\begin{center}
	\begin{tikzpicture}[domain=0:0.1,xscale=2,yscale=2]
	\draw[->] (-0.5,0) -- (1.5,0) node [above,right]{$\frac{1}{p}$};
	\draw[->] (0,-0.5) -- (0,1.5) node [above,left]{$\frac{1}{q}$};
	\filldraw[draw=blue,ultra thick, fill=blue!20] (0,0) -- (1,0) -- (1,1) -- cycle;
	\node[draw,align=left] at (1.5,1.5) {$r=\infty$ \\ weak type $(p,q)$};
	\node[blue] at (0,0) {\textbullet};
	\node[blue] at (1,0) {\textbullet};
	\node[blue] at (1,1) {\textbullet};
	\end{tikzpicture} 
	\begin{tikzpicture}[domain=0:0.1,xscale=2,yscale=2]
	\draw[->] (-0.5,0) -- (1.5,0) node [above,right]{$\frac{1}{p}$};
	\draw[->] (0,-0.5) -- (0,1.5) node [above,left]{$\frac{1}{q}$};
	\filldraw[draw=blue,ultra thick, fill=blue!20] (0,0) -- (1,0) -- (1,1) -- cycle;
	\node[blue] at (0,0) {\textbullet};
	\node[blue] at (1,1) {\textbullet};
	\node[white] at (1,1) {$\bullet$};
	\draw[white,thick] (1,0) -- (1,1);
	\node[draw,align=left] at (1.5,1.5) {$r=\infty$ \\ strong type $(p,q)$};
	\node[blue] at (1,0) {\textbullet};
	\end{tikzpicture}
	
	\begin{tikzpicture}[domain=0:0.1,xscale=2,yscale=2]
	\draw[->] (-0.5,0) -- (1.5,0) node [above,right]{$\frac{1}{p}$};
	\draw[->] (0,-0.5) -- (0,1.5) node [above,left]{$\frac{1}{q}$};
	\filldraw[draw=blue,ultra thick, fill=blue!20] (0,0) -- (1,0) -- (1,1) -- cycle;
	\node[blue] at (0,0) {\textbullet};
	\node[blue] at (1,1) {\textbullet};
	\draw[white,thick] (0,0) -- (1,1);
	\node[draw,align=left] at (1.5,1.5) {$0<r<\infty$ \\ weak type $(p,q)$};
	\node[blue] at (1,0) {\textbullet};
	\node[white] at (0,0) {$\bullet$};
	\node[white] at (1,1) {$\bullet$};
	\node[blue] at (1,0) {$\bullet$};
	\end{tikzpicture}
	\begin{tikzpicture}[domain=0:0.1,xscale=2,yscale=2]
	\draw[->] (-0.5,0) -- (1.5,0) node [above,right]{$\frac{1}{p}$};
	\draw[->] (0,-0.5) -- (0,1.5) node [above,left]{$\frac{1}{q}$};
	\filldraw[draw=blue,ultra thick, fill=blue!20] (0,0) -- (1,0) -- (1,1) -- cycle;
	\node[blue] at (0,0) {\textbullet};
	\node[blue] at (1,1) {\textbullet};
	\draw[white,thick] (1,0) -- (1,1);
	\draw[white,thick] (0,0) -- (1,1);
	\node[draw, align=left] at (1.5,1.5) {$0<r<\infty$ \\ strong type $(p,q)$};
	\node[blue] at (1,0) {\textbullet};
	\node[white] at (0,0) {$\bullet$};
	\node[white] at (1,1) {$\bullet$};
	\node[blue] at (1,0) {$\bullet$};
	\end{tikzpicture}
	
	\textit{Figure 1: range of exponents $p,q,r$ and weak/strong type estimates.}
\end{center}

In the next proof, the constants $c,C$ are allowed to depend on $d,\varepsilon,p,q,r$ but not on $f$.
\begin{proof}[Proof of Theorem \ref{thm:embedding}]
	Without loss of generality, we can assume $f$ to be nonnegative. In fact, by definition \eqref{eq:embedding_map}, we have the pointwise bound
	\begin{equation*}
		\abs{F_\phi(f)(y,t)} \leq F_{\abs{\phi}}(\abs{f})(y,t) \leq F(\abs{f})(y,t).
	\end{equation*}
	In particular, we have
	\begin{equation*}
		F(f)(y,t) = \int_{\R^d} f(z) t^{-d} (1+t^{-1}\abs{y-z})^{-d-\varepsilon} \diff z.
	\end{equation*}
	This expression can be bounded either by means of the centred maximal function
	\begin{equation} \label{eq:maximal_bound}
		F(f)(y,t) \leq C Mf(y),	
	\end{equation}
	or by Young's convolution inequality 
	\begin{equation} \label{eq:convolution_bound}
		F(f)(y,t) \leq C t^{-\frac{d}{p}} \norm{f}_{L^p(\R^d)}.
	\end{equation}
	
	\subsection{Strong type $(p,q)$ estimates for $0 < r \leq \infty$ in the range for $p \neq 1,q$ displayed in Fig. 1}
	The strong type $(p,q)$ estimates in the range $1 < p < q \leq \infty, 0 < r \leq \infty$ follow by the already known strong type $(p,p)$ estimate for $1 < p \leq \infty, r= \infty$ and Theorem \ref{thm:inclusion_tent_spaces}.
	
	\subsection{Strong type $(1,\infty)$ estimates for $0 < r \leq \infty$}
	We aim to prove that, for every $E \in \mathcal{E}$,
	\begin{equation} \label{eq:claim}
		\ell^r( t^{d} F(f))(E) \leq C \norm{f}_{L^1(\R^d)}.
	\end{equation}
	
	If $r=\infty$, the claim follows by \eqref{eq:convolution_bound}. 
	
	Now let $0<r<\infty$. By Theorem \ref{thm:collapsing_Holder_triangular_upper_half_space}, property $(iii)$, the decay property of $\phi$, and the translation invariance of the $L^\infty(\ell^r)$ quasi-norm, it is enough to prove the inequality assuming that $f$ is supported in $(-1,1)^d$ and $\phi = 1_{(-1,1)^d}$. In this case, we have
	\begin{equation*}
		F_\phi (f) (y,t) \leq C t^{-d} \norm{f 1_{ y + (-t,t)^d  }}_{L^1(\R^d)} 1_{ \{(-1-s,1+s)^d \times \{s\}, s >0 \} } (y,t),
	\end{equation*}
	and it is enough to prove \eqref{eq:claim} for the elements of $\mathcal{E}$ of the form
	\begin{equation*}
		E_{x,u} =( x+ (-u,u)^d) \times (0,2u) \in \mathcal{E},
	\end{equation*}
	for every $u>0, x \in (-1-u,1+u)^d$. We distinguish two cases, $r \geq 1$ and $0 < r < 1$.
	
	{\textbf{Case I: $r \geq 1$.}} Let $r=1$. We have
	\begin{align*}
		\ell^1 (t^{d} F_\phi(f)) (E_{x,u}) &\leq \frac{C}{u^d} \int_0^{2u} \int_{x+ (-u,u)^d} \int_{(-1,1)^d} f(z) 1_{ y + (-t,t)^d  } (z) \diff z \diff y \frac{\diff t}{t} \\ 
		&\leq \frac{C}{u^d} \int_{(-1,1)^d} f(z) \int_0^{2u} \int_{x+ (-u,u)^d} 1_{ z + (-t,t)^d  } (y) \diff y \frac{\diff t}{t} \diff z \\
		&\leq \frac{C}{u^d}\norm{f}_{L^1(\R^d)} \int_0^{2u} t^d \frac{\diff t}{t} \\
		&\leq C \norm{f}_{L^1(\R^d)},
	\end{align*}
	where we used Fubini in the second inequality.
	
	If $1 < r < \infty$, Proposition \ref{thm:log_convexity} implies the strong type $(1,\infty)$ estimate for $L^\infty(\ell^r)$ from those for $L^\infty(\ell^1),L^\infty(\ell^\infty)$.
	
	{\textbf{Case II: $0<r<1$.}} We have
	\begin{align*}
		\ell^r (t^{d} F_\phi(f)) (E_{x,u}) &\leq \ell^1 (t^{d-\frac{1}{2}} F_\phi(f)) (E_{x,u}) \ell^{\frac{r}{1-r}} (t^{\frac{1}{2}}) (E_{x,u}) \\ 
		&\leq C (\frac{1}{u^d} \int_0^{2u} \int_{x+ (-u,u)^d}  t^{-\frac{1}{2}} \int_{(-1,1)^d} f(z) 1_{ y + (-t,t)^d  } (z) \diff z \diff y \frac{\diff t}{t}) \times \\
		& \ \ \ \ \times (\frac{1}{u^d} \int_0^{2u} \int_{x+ (-u,u)^d}  t^{\frac{r}{2(1-r)}} \diff y \frac{\diff t}{t})^{\frac{1-r}{r}} \\
		&\leq C \norm{f}_{L^1(\R^d)} (\frac{1}{u^d} \int_0^{2u} t^{d-\frac{1}{2}} \frac{\diff t}{t}) (\int_0^{2u} t^{\frac{r}{2(1-r)}} \frac{\diff t}{t})^{\frac{1-r}{r}} \\
		&\leq C \norm{f}_{L^1(\R^d)},
	\end{align*}
	where we used H\"{o}lder's inequality with exponents $(1,\frac{r}{1-r})$ in the first inequality, and then we proceeded as in the previous case.
	
	\subsection{Weak type $(1,q)$ estimates for $0 < r \leq \infty$ in the range for $q\neq \infty$ displayed in Fig. 1}
	
	We aim to prove that, for every $\lambda > 0$,
	\begin{equation*}
		\lambda^q \mu(\ell^r(t^{d-\frac{d}{q}}F(f)) > \lambda) \leq C \norm{f}_{L^1(\R^d)}^q.
	\end{equation*}
	This requires to construct, for every $\lambda > 0$, a set with appropriate outer measure approximating the super level measure at level $\lambda$. 
	
	For fixed $f$ and $\lambda>0$, let $D_\lambda$ be the set
	\begin{equation*}
		D_\lambda = \{ x \in \R^d \colon Mf(x) > \lambda^q \norm{f}_{L^1(\R^d)}^{1-q} \}.
	\end{equation*}
	We have
	\begin{equation*}
		\abs{D_\lambda} \leq C \lambda^{-q} \norm{f}_{L^1(\R^d)}^q,
	\end{equation*}
	because of the weak type $(1,1)$ estimate for the maximal function operator on $\R^d$.
		
	Let $ \{ B_i \colon i \in I_\lambda \} $ be a Whitney covering of $D_\lambda$ up to a set of measure $0$ by pairwise disjoint open dyadic cubes in $\R^d$, and denote by $x_i$ and $s_i$ the centre and the side length of $B_i$, respectively. Let $Q(B_i)=Q_i \in \mathcal{D}$ be the dyadic box over the cube $B_i$, and define
	\begin{equation*}
		E_\lambda = \bigcup_{i \in I_\lambda} Q_i \subseteq X.
	\end{equation*}
	
	In particular, we have
	\begin{equation*}
		\mu(E_\lambda) \leq \abs{D_\lambda} \leq C \lambda^{-q} \norm{f}_{L^1(\R^d)}^q.
	\end{equation*}
	We are left with proving that for every $E \in \mathcal{E}$,
	\begin{equation*}
		\ell^r( t^{d-\frac{d}{q}} F(f) 1_{E_\lambda^c}) (E) \leq C \lambda.
	\end{equation*}
	
	If $(x,s) \in E_\lambda^c, x \in D_\lambda$, then $x \in Q_i$ for some $i \in I_\lambda$, $s > s_i$, and there exists $u \in \mathbb{S}^{d-1}$ such that $x+s' u \in D_\lambda^c$, for $c s_i \leq s' \leq C s_i $. As a consequence, for $ t \geq s $, we have
	\begin{equation*}
		t^{ d- \frac{d}{q} } F(f) (x,t)  \leq C (t+s')^{ d- \frac{d}{q} } F(f) ( x + s' u, t + s' ) .
	\end{equation*}
	Therefore, we have
	\begin{equation*}
		\ell^r( t^{d-\frac{d}{q}} F(f) 1_{E_\lambda^c}) (E) \leq C \sup_{x \in D_\lambda^c} \norm{t^{d-\frac{d}{q}} F(f) }_{L^r(\{ x \} \times (0,\infty), \frac{\diff t}{t})},
	\end{equation*}
	and it is enough to show that for every $x \in D_\lambda^c$, we have
	\begin{equation*}
		\norm{t^{d-\frac{d}{q}} F(f) }_{L^r(\{ x \} \times (0,\infty),\frac{\diff t}{t})} \leq C \lambda.
	\end{equation*}
	
	We split the norm on the left hand side at height $0< R(x)< \infty$ soon to be fixed
	\begin{equation} \label{eq:splitting_norm}
		\norm{t^{d-\frac{d}{q}}  F(f)}_{L^r(\{ x \} \times(0,R(x)),\frac{\diff t}{t})}+\norm{t^{d-\frac{d}{q}} F(f)}_{L^r(\{ x \} \times(R(x),\infty),\frac{\diff t}{t})}.
	\end{equation}
	We bound $F(f)$ by \eqref{eq:maximal_bound} in the first summand obtaining
	\begin{equation*}
		C Mf(x) R(x)^{d-\frac{d}{q}},
	\end{equation*}
	and by \eqref{eq:convolution_bound} in the second summand obtaining
	\begin{equation*}
		C \norm{f}_{L^1(\R^d)} R(x)^{- \frac{d}{q}}.
	\end{equation*}
	If $0<r<\infty$, we require the additional hypothesis $q >1$ to guarantee the $L^r$-integrability at $0$ of the estimate for the first summand.
	
	Optimizing the choice of $R(x)$ with
	\begin{equation*}
		R(x) = C Mf (x)^{-\frac{1}{d}} \norm{f}_{L^1(\R^d)}^{\frac{1}{d}},
	\end{equation*}
	we get the bound for \eqref{eq:splitting_norm}
	\begin{equation*}
		C Mf(x)^{\frac{1}{q}} \norm{f}_{L^1(\R^d)}^{1-\frac{1}{q}}.
	\end{equation*}
	
	We conclude by the estimate for every $x \in D_\lambda^c$,
	\begin{equation*}
		Mf(x) \leq \lambda^q \norm{f}_{L^1(\R^d)}^{1-q}.
	\end{equation*}
		
	\subsection{Counterexample to the strong type $(1,q)$ estimates for $1 \leq q < \infty, 0<r \leq \infty$}
	In the following counterexamples we are going to use test functions $\phi$ satisfying the condition \eqref{eq:decay_condition} with a multiplicative factor different from $1$. While it does not effect the nature of the counterexamples, it spares us the definition of other appropriate constants. 
	
	For $f= 1_{(-1,1)^d}, \phi = 1_{(-1,1)^d}$, we have
	\begin{equation*}
		F_\phi(f)(y,t) \geq t^{-d} 1_{\{ (-s,s)^d \times \{s \} , s \geq 1 \}} (y,t).
	\end{equation*}
	
	For every $u \geq 1$, let 
	\begin{equation*}
		E_u = (0,u)^{d+1} \in \mathcal{E}.
	\end{equation*}
	Then, for $0 < r < \infty$, we have
	\begin{equation*}
		\ell^r (t^{d-\frac{d}{q}} F (f)  1_{(\R^d \times (0,u))^c} ) (E_{2u}) \geq (\frac{1}{(2u)^d} \int_{u}^{2u} \int_{(0,u)^d} t^{-\frac{dr}{q}} \diff y \frac{\diff t}{t})^{\frac{1}{r}} \geq C  u^{-\frac{d}{q}}, 
	\end{equation*}
	and it is easy to see that, for $r=\infty$, we have
	\begin{equation*}
		\ell^{\infty}(t^{d-\frac{d}{q}} F (f)  1_{(\R^d \times (0,u))^c} ) (E_{2u}) = u^{-\frac{d}{q}}. 
	\end{equation*} 
	
	Therefore, for every fixed $u \geq 1$, if $A \subseteq X$ is such that
	\begin{equation*}
		\ell^r (t^{d-\frac{d}{q}} F (f)  1_{A^c} ) (E_{2u}) \leq  C  u^{-\frac{d}{q}}, 
	\end{equation*}
	then $A \setminus (\R^d \times (0,u))  \neq \emptyset$, hence we have
	\begin{equation*}
		\mu(\ell^r(t^{d-\frac{d}{q}} F(f))> C  u^{-\frac{d}{q}}) \geq u^d.
	\end{equation*}
	
	As a consequence, we have
	\begin{equation*}
		\norm{t^{d-\frac{d}{q}} F_\phi (f)}^q_{L^q(\ell^r)} \geq C \int_0^{C} u^{-d} \mu(\ell^r(t^{d-\frac{d}{q}} F(f))> C  u^{-\frac{d}{q}}) \frac{\diff u}{u} = \infty.  
	\end{equation*}
	
	\subsection{Counterexample to the weak type $(p,q)$ estimates for $1 \leq q \leq p \leq \infty,0<r < \infty$ and $1 \leq q < p \leq \infty, r= \infty$}
	
	For $f,\phi$ as above, we have
	\begin{equation*}
		F_\phi(f)(y,t) \geq 1_{\{ (-1+s,1-s)^d \times \{s \} , s \leq 1 \}} (y,t).
	\end{equation*}
	
	For every $x \in (0,\frac{1}{4})^d, u \leq \frac{1}{4}$, let 
	\begin{equation*}
		E_{x,u} =( x+ (-u,u)^d) \times (0,2u) \in \mathcal{E}.
	\end{equation*}
	Then, for $1 \leq q \leq p \leq \infty,0<r < \infty$, we have
	\begin{equation*}
		\ell^r (t^{\frac{d}{p}-\frac{d}{q}} F_\phi (f)) (E_{x,u}) \geq (\frac{1}{(2u)^d} \int_0^{2u} \int_{ x+ (-u,u)^d} t^{\frac{dr}{p}-\frac{dr}{q}} \diff y \frac{\diff t}{t})^{\frac{1}{r}} = \infty, 
	\end{equation*}
	thus exhibiting a counterexample in the case $p=q=\infty$. Moreover, it is easy to see that, for $1 \leq q < p \leq \infty, r= \infty$, we have
	\begin{equation*}
		\ell^\infty (t^{\frac{d}{p}-\frac{d}{q}} F_\phi (f)) (E_{x,u}) = \infty. 
	\end{equation*} 
	
	Let $A \subseteq (-1,1)^d \times (0,\infty)$ be such that, for every $x \in (0,\frac{1}{4})^d, u \leq \frac{1}{4}$,
	\begin{equation} \label{eq:condition_on_A}
		\ell^r (t^{\frac{d}{p}-\frac{d}{q}} F (f)  1_{A^c} ) (E_{x,u}) < \infty.
	\end{equation}
	For every finite collection $\mathcal{E}' \subseteq \mathcal{E}$ covering $A$, let
	\begin{equation*}
		A_{\mathcal{E}'} = \bigcup_{E \in \mathcal{E}'} \overline{B(E)},
	\end{equation*}
	where $B(E)$ is the base in $\R^d$ of $E$, and $\overline{B}$ is the closure of $B$ in $\R^d$. If $A_{\mathcal{E}'} \cap [0,\frac{1}{4}]^d \neq \emptyset$, there would exist $x,u$ such that $E_{x,u} \cap A = \emptyset$, hence contradicting \eqref{eq:condition_on_A}. Therefore, for every $\lambda > 0$, we have
	\begin{equation*}
		\mu(\ell^r(t^{\frac{d}{p}-\frac{d}{q}} F_\phi(f))> \lambda) \geq C,
	\end{equation*}
	where $C$ does not depend on $\lambda$.
	
	As a consequence, for $q \neq \infty$, we have
	\begin{equation*}
		\norm{t^{\frac{d}{p}-\frac{d}{q}} F_\phi (f)}_{L^{q,\infty}(\ell^r)}^q \geq C \sup_{\lambda>0} \lambda^q = \infty.  
	\end{equation*}
\end{proof}

Before stating and proving the embedding result for functions in $H^1(\R^d)$, we recall the definition of $H^1$-atom. A function $f$ is a $H^1$-atom associated with the cube $B \subseteq \R^d$ if $f$ is essentially supported in $B$ and
\begin{equation} \label{eq:H1atomsizecondition}
	\int_B f(x) \diff x = 0, \ \ \ \ \norm{f}_{L^\infty(\R^d)} \leq \abs{B}^{-1}.
\end{equation}
\begin{proposition} \label{thm:embedding_Hardy_space}
	Let $\varphi \in \mathcal{S}(\R^d)$. Then there exists a constant $C=C(d,\varphi)$ such that, for every $f \in H^1(\R^d)$,
	\begin{equation*}
		\norm{F_\varphi(f)}_{L^1(\ell^\infty)} \leq C \norm{f}_{H^1(\R^d)}.
	\end{equation*}
\end{proposition}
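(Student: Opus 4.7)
The plan is to use the atomic decomposition of $H^1(\R^d)$: write $f = \sum_k \lambda_k a_k$ with each $a_k$ an $H^1$-atom and $\sum_k \abs{\lambda_k} \leq C \norm{f}_{H^1(\R^d)}$. Since $F_\varphi$ is linear and Theorem \ref{thm:collapsing_Holder_triangular_upper_half_space}, property $(iii)$, provides the triangle inequality for the outer $L^1(\ell^\infty)$ quasi-norm, it suffices to prove a uniform bound $\norm{F_\varphi(a)}_{L^1(\ell^\infty)} \leq C$ for every $H^1$-atom $a$. I shall work with the equivalent tent space formulation given by Theorem \ref{thm:equivalence_tent_outer_spaces}, namely bound the tent space norm $\int_{\R^d} A_\infty(F_\varphi(a))(x) \diff x$.

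Fix an atom $a$ associated with a cube $B = B(x_0,r)$, and split the integral over $\R^d$ into contributions from $2B$ and from its complement. On $2B$ I use only the size condition on $a$: since $\norm{a}_{L^\infty(\R^d)} \leq \abs{B}^{-1}$ and $\varphi \in \mathcal{S}(\R^d) \subseteq L^1(\R^d)$, Young's convolution inequality yields $\abs{F_\varphi(a)(y,t)} \leq C \abs{B}^{-1}$ uniformly in $(y,t)$, hence $\int_{2B} A_\infty(F_\varphi(a))(x) \diff x \leq C$ uniformly in $B$.

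On $\R^d \setminus 2B$ I exploit the cancellation $\int a = 0$ together with the smoothness of $\varphi$. Writing
\begin{equation*}
F_\varphi(a)(y,t) = \int_B a(z) \bigl[ t^{-d}\varphi(t^{-1}(y-z)) - t^{-d}\varphi(t^{-1}(y-x_0)) \bigr] \diff z
\end{equation*}
and using the mean value theorem together with the Schwartz decay of $\nabla \varphi$, one derives, for arbitrarily large $N$, the pointwise bound
\begin{equation*}
\abs{F_\varphi(a)(y,t)} \leq C \, r \, t^{-d-1} \bigl(1 + \abs{y-x_0}/t\bigr)^{-N}.
\end{equation*}
For $x \notin 2B$ and $(y,t) \in \Gamma(x)$, distinguishing the regimes $t \leq \abs{x-x_0}$ and $t > \abs{x-x_0}$ converts this into $A_\infty(F_\varphi(a))(x) \leq C \, r \, \abs{x-x_0}^{-d-1}$, whose integral over $\R^d \setminus 2B$ is bounded by a dimensional constant. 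Combining the two contributions gives the uniform atomic estimate, and summing over $k$ concludes the proof.

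The main obstacle will be the cancellation argument on the far region: one must combine the smoothness gain of order $r/t$ produced by the zero-mean condition with the Schwartz decay of $\varphi$ in a way compatible with the supremum over the nontangential cone $\Gamma(x)$. The dichotomy above reflects the fact that only at scale $t \sim \abs{x-x_0}$ do the two competing factors balance, yielding the sharp decay $\abs{x-x_0}^{-d-1}$ which is integrable at infinity; at smaller or larger scales one of the two factors provides ample slack.
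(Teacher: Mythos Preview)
Your argument is correct and uses the same two pointwise ingredients as the paper: the size bound $|F_\varphi(a)|\le C|B|^{-1}$ on the local part, and the cancellation--smoothness estimate giving the extra factor $r/t$ on the far part. The organization differs, however. The paper applies the triangle inequality not only to pass to atoms but also to reduce $\varphi$ itself to a smooth function supported in a fixed cube; this makes $F_\varphi(a)$ vanish identically outside an explicit region of the upper half space and allows a direct computation of the super level measure $\mu(\ell^\infty(F_\varphi(a))>\lambda)$, which is then integrated in $\lambda$ to obtain the outer $L^1(\ell^\infty)$ norm. You instead keep the full Schwartz tail of $\varphi$, transfer to the tent space $T^1_\infty$ via Theorem~\ref{thm:equivalence_tent_outer_spaces}, and integrate the nontangential maximal function $A_\infty$---a more classical packaging that bypasses the super level measure calculus entirely. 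Both routes are equally valid; the paper's stays inside the outer $L^p$ framework it is building, while yours leans on the tent space equivalence just established.

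One small point to tidy when you write it out: the pointwise bound $|F_\varphi(a)(y,t)|\le C\,r\,t^{-d-1}\bigl(1+|y-x_0|/t\bigr)^{-N}$ is not literally true for all $(y,t)$, since when $|y-x_0|\lesssim r$ and $t\ll r$ the mean-value point can sit near the origin and the decay factor is lost. What the mean value theorem actually yields is the same inequality with $|y-x_0|$ replaced by $(|y-x_0|-r)_+$, together with the cruder bound $C\,r\,t^{-d-1}$ valid everywhere. This weaker pair is all you need: for $x\notin 2B$ and $(y,t)\in\Gamma(x)$ the constraint $|y-x_0|>|x-x_0|-t$ keeps you out of the problematic region in the small-$t$ regime, and in the regime $t\gtrsim|x-x_0|$ the crude bound already gives $C\,r\,|x-x_0|^{-d-1}$.
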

\begin{proof}
	By Theorem \ref{thm:collapsing_Holder_triangular_upper_half_space}, property $(iii)$, the decay properties of $\varphi$ and its derivatives, and the definition of the Hardy space $(H^1(\R^d), \norm{\cdot}_{H^1(\R^d)})$, it is enough to prove the inequality assuming that $\varphi$ is a smooth function compactly supported in a cube of side length $2$ and $f$ is a $H^1$-atom associated with a cube $B$. Moreover, due to the translation invariance of the $L^1(\ell^\infty)$ quasi-norm, we can assume that both $f,\varphi$ are supported in cubes centred in the origin. Therefore it is enough to show that
	\begin{equation*}
		\norm{F_\varphi(f)}_{L^1(\ell^\infty)} \leq C.
	\end{equation*}
	
	Let $2B$ be the cube with the same centre of $B$ and double the side length. For $0 < t <\abs{B}^{\frac{1}{d}},  y \in 2B$, we have
	\begin{equation*}
		\abs{F_\varphi (f) (y,t)} \leq C \abs{B}^{-1},	
	\end{equation*}
	where we used the $L^\infty$ bounds for $f$.
	
	For $t \geq \abs{B}^{\frac{1}{d}}, y \in (-\abs{B}^{\frac{1}{d}} - t, \abs{B}^{\frac{1}{d}} + t)^d$, we have
	\begin{align*}
		\abs{F_\varphi (f) (y,t)} & = C t^{-d} \abs{\int_{B} f(z) \varphi(t^{-1}(y-z)) \diff z} \\
		& = C t^{-d} \abs{\int_{B} f(z)( \varphi(t^{-1}(y-z)) - \varphi(t^{-1} y)) \diff z}  \\
		& \leq C t^{-d} \int_{B} \abs{f(z)} t^{-1} \abs{z} \diff z \\
		& \leq C \abs{B}^{\frac{1}{d}} t^{-(d+1)} ,
	\end{align*}
	where we used the $L^\infty$ bounds, the localized support and the cancellation property of $f$ together with the smoothness of $\varphi$.
	
	For all the others $(y,t)$, we have $F_\varphi (f)$ is $0$, since the supports of $f$ and the dilated version of $\varphi$ are disjoint. 
		
	As a consequence, for $\lambda > C \abs{B}^{-1}$, we have
	\begin{equation*}
		\mu(\ell^\infty ( F_\varphi(f))> \lambda) = 0,
	\end{equation*}
	and for $0 < \lambda \leq C \abs{B}^{-1}$, we have
	\begin{equation*}
		\mu(\ell^\infty ( F_\varphi(f))> \lambda) \leq C \abs{B}^{\frac{1}{d+1}} \lambda^{-\frac{d}{d+1}}.
	\end{equation*}
	
	Therefore, we have
	\begin{equation*}
		\norm{F_\varphi(f)}_{L^1(\ell^\infty)} \leq C \int_0^{C \abs{B}^{-1}} \mu(\ell^\infty (F_\varphi(f))> \lambda) \diff \lambda \leq C.
	\end{equation*}
\end{proof}

\section{Applications}
In this section we show some applications of the strong type estimates in Theorem \ref{thm:embedding} and Proposition \ref{thm:embedding_Hardy_space}. We use them to give alternative proofs of the Hardy-Littlewood-Sobolev inequality, and the Gagliardo-Nirenberg-Sobolev inequality up to the endpoint in the spirit of the two-step program outlined in the introduction.
\begin{theorem} [HLS inequality]
	For $1<p,q< \infty, 0< \alpha <d$ such that
	\begin{equation*}
		\frac{1}{p}+\frac{1}{q}+\frac{\alpha}{d}=2,
	\end{equation*}
	there exists a constant $C=C(p,q,d)$ such that, for every $f \in L^p(\R^d), g \in L^q(\R^d)$,
	\begin{equation*}
		\abs{\int_{\R^{2d}} \frac{f(x)g(y)}{\abs{x-y}^\alpha} \diff x \diff y} \leq C \norm{f}_{L^p(\R^d)} \norm{g}_{L^q(\R^d)}.
	\end{equation*}
\end{theorem}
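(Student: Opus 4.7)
The plan is to realize the bilinear form as an outer $L^1(X,\omega)$ pairing of two embedded functions via a Calder\'on reproducing formula, and then control it using outer H\"older together with the strong type embedding estimates already established in Theorem \ref{thm:embedding}. This is exactly the two-step program advocated in the introduction.

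First, I would fix a radial function $\phi \in \mathcal{S}(\R^d)$ with $\hat\phi(\xi) > 0$ on an open set, and establish, by a Fourier-side computation, the identity
\begin{equation*}
\frac{1}{\abs{x-y}^\alpha} = c \int_0^\infty t^{d-\alpha} (\phi_t * \phi_t)(x-y) \frac{dt}{t},
\end{equation*}
where $\phi_t(z) = t^{-d}\phi(t^{-1}z)$ and $c = c(\phi,\alpha,d) > 0$. The verification is a standard substitution $s = t\abs{\xi}$ after taking Fourier transforms of both sides; the remaining radial integral $\int_0^\infty s^{d-\alpha} \hat\phi(s)^2 \, ds/s$ converges because $d-\alpha > 0$ and $\hat\phi$ is Schwartz, and is strictly positive by the choice of $\phi$. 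Substituting into the bilinear form, applying Fubini, and using that
\begin{equation*}
\int_{\R^d} \phi_t(z-x) \phi_t(z-y) \, dz = (\phi_t * \phi_t)(x-y)
\end{equation*}
by radiality, I would rewrite
\begin{equation*}
\int_{\R^{2d}} \frac{f(x) g(y)}{\abs{x-y}^\alpha} dx dy = c \int_X t^{d-\alpha} F_\phi(f)(z,t) F_\phi(g)(z,t) \, \omega(z,t) \, dz \, dt = c \norm{t^{d-\alpha} F_\phi(f) F_\phi(g)}_{L^1(X,\omega)},
\end{equation*}
converting the singular integral into an $L^1(X,\omega)$ pairing of the embedded functions with a fractional scale weight $t^{d-\alpha}$.

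Next, since the HLS scaling condition reads $1/p + 1/q = 2 - \alpha/d > 1$, I can choose exponents $p_1, q_1$ dual to each other with $p < p_1 < \infty$ and $q < q_1 < \infty$. Setting $\beta_1 = d/p - d/p_1 \geq 0$ and $\beta_2 = d/q - d/q_1 \geq 0$, the scaling identity becomes $\beta_1 + \beta_2 = d - \alpha$, which lets me split the weight as $t^{d-\alpha} = t^{\beta_1} \cdot t^{\beta_2}$. Applying outer H\"older (Proposition \ref{thm:Holder_inequality}) with outer conjugate exponents $(p_1, q_1)$ and inner conjugate sizes $(r_1,r_2) = (2,2)$ yields
\begin{equation*}
\norm{t^{d-\alpha} F_\phi(f) F_\phi(g)}_{L^1(X,\omega)} \leq \norm{t^{\beta_1} F_\phi(f)}_{L^{p_1}(\ell^2)} \norm{t^{\beta_2} F_\phi(g)}_{L^{q_1}(\ell^2)}.
\end{equation*}

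To conclude, since $1 < p < p_1 < \infty$ and $1 < q < q_1 < \infty$ fall in the first case of \eqref{eq:case}, and since a suitable constant multiple of $\phi$ satisfies the decay condition \eqref{eq:decay_condition} so that $F_\phi \lesssim F$ pointwise, the strong type estimates of Theorem \ref{thm:embedding} bound each factor above by $\norm{f}_{L^p(\R^d)}$ and $\norm{g}_{L^q(\R^d)}$, respectively. The main obstacle is the careful setup of the Calder\'on reproducing formula and the verification that all integrals converge absolutely so that Fubini applies, but this is entirely classical; the novelty and conceptual content of the argument lies in the last two steps, where HLS is reduced to two independent embedding estimates connected by an outer H\"older inequality.
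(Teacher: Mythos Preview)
Your approach is essentially the paper's: a Calder\'on-type reproducing formula expresses the bilinear form as an $L^1(X,\omega)$ pairing in the upper half space, then outer H\"older and Theorem \ref{thm:embedding} finish. The only real difference is the H\"older splitting: the paper places the full fractional weight on one factor and uses the pair $(L^{q'}(\ell^1), L^q(\ell^\infty))$, invoking one off-diagonal and one diagonal embedding from \eqref{eq:case}, while you distribute the weight as $t^{\beta_1}\cdot t^{\beta_2}$ and use $(L^{p_1}(\ell^2), L^{q_1}(\ell^2))$ with two off-diagonal embeddings; both are valid and the arithmetic you give for the existence of $(p_1,q_1)$ is correct.

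One small gap: Proposition \ref{thm:Holder_inequality} bounds the outer $L^1(\ell^1)$ quasi-norm, not $\|\cdot\|_{L^1(X,\omega)}$. You need to insert Theorem \ref{thm:collapsing_Holder_triangular_upper_half_space}(i) (the collapsing $\|\cdot\|_{L^1(X,\omega)} \leq C\|\cdot\|_{L^1(\ell^1)}$) between your $L^1(X,\omega)$ identity and the outer H\"older step, exactly as the paper does.
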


\begin{proof}
	Let $\psi \in \mathcal{S}(\R)$ be such that $ \supp \hat{\psi} \subseteq [\frac{1}{2},2], \int_0^\infty \hat{\psi}^2(t) \frac{\diff t}{t}=1$, and define $\Psi, \Phi \in \mathcal{S}(\R^d)$ by
	\begin{equation*}
		\hat{\Psi}(\xi) = \hat{\psi}(\abs{\xi}), \hat{\Phi}(\xi)=\abs{\xi}^{\alpha-d} \hat{\psi}(\abs{\xi}).
	\end{equation*}
	Let $f,g \in \mathcal{S}(\R^d)$. By a frequency localization argument, we have
	\begin{align*}
		\abs{\int_{\R^{2d}} \frac{f(x)g(y)}{\abs{x-y}^\alpha} \diff x \diff y} 
		&\leq C \abs{\int_{\R^{2d}} \hat{f}(\xi)\hat{g}(\eta) \abs{\xi-\eta}^{\alpha-d}  \delta(\xi+\eta) \diff \xi \diff \eta} \\
		&\leq C \abs{\int_{\R^{2d} \times (0,\infty)} \hat{f}(\xi)\hat{g}(\eta) \abs{\xi-\eta}^{\alpha-d}  \delta(\xi+\eta) \hat{\psi}^2(t) \diff \xi \diff \eta \frac{\diff t}{t}} \\
		&\leq C \abs{\int_{\R^{d} \times (0,\infty)} t^{d-\alpha} \hat{f}(\xi) \hat{\psi}(\abs{\xi} t) \hat{g}(-\xi) (t \abs{\xi})^{\alpha-d}   \hat{\psi}(\abs{\xi} t) \diff \xi \frac{\diff t}{t}} \\
		&\leq C \abs{\int_{\R^{d} \times (0,\infty)} t^{d-\alpha} F_\Psi(f)(y,t) G_\Phi(g)(y,t) \diff y \frac{\diff t}{t}}.
	\end{align*}
	By Theorem \ref{thm:collapsing_Holder_triangular_upper_half_space}, property $(i)$, the integral in the last display is bounded by
	\begin{equation*}
		\norm{t^{d-\alpha}F_\Psi(f) G_\Phi(g) }_{L^1(\ell^1)}.
	\end{equation*}
	Applying outer H\"{o}lder's inequality, Proposition \ref{thm:Holder_inequality}, we estimate it in terms of
	\begin{equation*}
		\norm{t^{d-\alpha} F_\Psi(f)}_{L^{q'}(\ell^1)} \norm{G_\Phi(g)}_{L^{q}(\ell^\infty)},
	\end{equation*}
	which by the strong type estimates in Theorem \ref{thm:embedding} is bounded by
	\begin{equation*}
		\norm{f}_{L^p(\R^d)} \norm{g}_{L^q(\R^d)}.	
	\end{equation*}
	A standard approximation argument yields the result for arbitrary $f \in L^p(\R^d), g \in L^q(\R^d)$.
\end{proof}

\begin{theorem} [GNS inequality]
	For $1 \leq p < d$, there exists a constant $C=C(p,d)$ such that, for every $ f \in W^{1,p}(\R^d)$,
	\begin{equation*}
		\norm{f}_{L^{p_\ast}(\R^d)} \leq C \norm{\nabla f}_{L^p(\R^d)},
	\end{equation*}
	where $p_\ast = \frac{dp}{d-p}$. 
	
	Moreover, there exists a constant $C=C(d)$ such that, for every $ f \in W^{1,d}(\R^d)$,
	\begin{equation*}
		\norm{f}_{BMO(\R^d)} \leq C \norm{\nabla f}_{L^d(\R^d)}.
	\end{equation*}
\end{theorem}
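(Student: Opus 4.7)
The plan is to mimic the two-step program the paper used for the Hardy--Littlewood--Sobolev inequality: by duality reduce $\|f\|_{L^{p_\ast}}$ to bounding the bilinear pairing $\int fg$, rewrite the pairing as an integral on the upper half space via a Calder\'on reproducing formula in which $\nabla f$ appears naturally, and close the estimate with outer H\"older followed by the strong type embedding estimates from Theorem \ref{thm:embedding}. Concretely, I would pick $\psi\in\mathcal{S}(\R)$ with $\operatorname{supp}\hat\psi\subseteq[1/2,2]$ and $\int_0^\infty\hat\psi^2(t)\,dt/t=1$, and set $\hat\Psi(\xi)=\hat\psi(|\xi|)$, $\hat\Phi^j(\xi)=-i\xi_j|\xi|^{-2}\hat\psi(|\xi|)$; these are Schwartz because $\hat\psi$ is supported away from the origin, and they satisfy $\Psi=\sum_j\partial_j\Phi^j$. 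A Fourier side computation then yields $(\Psi_t\ast f)(y)=t\sum_j(\Phi^j_t\ast\partial_j f)(y)$, so Plancherel and the Calder\'on reproducing formula give, for $f,g\in\mathcal{S}(\R^d)$,
\begin{equation*}
\int_{\R^d}f(y)g(y)\,dy=\sum_j\int_0^\infty\int_{\R^d}t\,F_{\Phi^j}(\partial_j f)(y,t)\,G_\Psi(g)(y,t)\,dy\,\frac{dt}{t}.
\end{equation*}

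For $1<p<d$ I would apply Theorem \ref{thm:collapsing_Holder_triangular_upper_half_space} $(i)$ to identify the right-hand side, up to a constant, with a pairing in $L^1(\ell^1)$, then invoke outer H\"older (Proposition \ref{thm:Holder_inequality}) with exponents $(p_\ast,(p_\ast)')$ on the outer $L^p$ scale and $(1,\infty)$ on the size scale to bound it by
\begin{equation*}
C\sum_j\|t\,F_{\Phi^j}(\partial_j f)\|_{L^{p_\ast}(\ell^1)}\,\|G_\Psi(g)\|_{L^{(p_\ast)'}(\ell^\infty)}.
\end{equation*}
Since $d/p-d/p_\ast=1$ matches the weight $t$, the strong type $(p,p_\ast)$ estimate of Theorem \ref{thm:embedding} (valid in the region $1<p<q\le\infty$ for any $r$, here $r=1$) bounds the first factor by $\|\nabla f\|_{L^p(\R^d)}$, and the strong type $((p_\ast)',(p_\ast)')$ estimate with $r=\infty$ (valid because $(p_\ast)'>1$ as $p<d$) bounds the second by $\|g\|_{L^{(p_\ast)'}(\R^d)}$. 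Taking the supremum over $g$ with $\|g\|_{L^{(p_\ast)'}}=1$ and a standard density argument conclude the Sobolev inequality in the range $1<p<d$.

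The same template handles the BMO endpoint at $p=d$, replacing $L^{p_\ast}$--$L^{(p_\ast)'}$ duality by the Fefferman duality $(H^1)^\ast=BMO$: it suffices to bound $|\int fg|$ for $g$ in the unit ball of $H^1(\R^d)$, and outer H\"older with exponents $(\infty,1)$ on the outer scale and $(1,\infty)$ on the size scale yields
\begin{equation*}
\Bigl|\int fg\,dy\Bigr|\le C\sum_j\|t\,F_{\Phi^j}(\partial_j f)\|_{L^{\infty}(\ell^1)}\,\|G_\Psi(g)\|_{L^{1}(\ell^\infty)}.
\end{equation*}
The strong type $(d,\infty)$ estimate of Theorem \ref{thm:embedding} (with $r=1$) and Proposition \ref{thm:embedding_Hardy_space} bound the two factors by $\|\nabla f\|_{L^d}$ and $\|g\|_{H^1}$ respectively, which gives $\|f\|_{BMO}\lesssim\|\nabla f\|_{L^d}$.

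The main obstacle is the endpoint $p=1$, which is precisely the ``up to the endpoint'' caveat flagged in the introduction: the first factor above would demand a strong type $(1,p_\ast)$ estimate, and by Theorem \ref{thm:embedding} only the weak form is available at $p=1$, producing $L^{p_\ast,\infty}$ rather than $L^{p_\ast}$. Within the outer $L^p$ apparatus alone the pairing does not close, and I would either leave $p=1$ out of the sketch or import it from classical input (coarea formula / isoperimetric inequality), consistent with the paper's phrasing.
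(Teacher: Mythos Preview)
Your proposal is correct and follows essentially the same route as the paper: a Calder\'on reproducing formula to express $\langle f,g\rangle$ as an upper half space integral with a built-in factor $t$ and $\nabla f$, then Theorem \ref{thm:collapsing_Holder_triangular_upper_half_space}$(i)$, outer H\"older, and the embeddings of Theorem \ref{thm:embedding} (plus Proposition \ref{thm:embedding_Hardy_space} at $p=d$), with $p=1$ deferred to a classical argument. The only cosmetic differences are that the paper produces the factor $1/\xi_i$ via a partition of unity $\{\varphi_i\}$ supported where $|\xi_i|$ is bounded below (instead of your multiplier $\xi_j/|\xi|^2$), and it cites the Loomis--Whitney inequality for $p=1$ rather than coarea/isoperimetry.
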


\begin{proof}
	Let $\{ \varphi_i \}_{i=1}^d$ be a smooth partition of the unity on the set $\{ \frac{1}{2} \leq \abs{\xi} \leq 2 \}$ such that $\supp \varphi_i \subseteq \{ \abs{\xi_i} > \frac{1}{4d} \} \cap \{ \frac{1}{4} \leq \abs{\xi} \leq 4 \}$. 
	
	For $\psi \in \mathcal{S}(\R)$ as above, let $\Psi_i \in \mathcal{S}(\R^d)$ be defined by
	\begin{equation*}
		\hat{\Psi}_i(\xi) = \frac{\hat{\psi}(\abs{\xi})}{\xi_i} \varphi_{i}(\xi).
	\end{equation*}
	For $1 < p < d$, let $f,g \in \mathcal{S}(\R^d)$. By a frequency localization argument, we have
	\begin{align*}
		\abs{\< f,g \>} &\leq C \abs{\int_{\R^{2d}} \hat{f}(\xi) \hat{g}(\eta) \delta(\xi+\eta) \diff \xi \diff \eta} \\
		&\leq C \abs{\int_{\R^{2d} \times (0,\infty)} \hat{f}(\xi)\hat{g}(\eta) \delta(\xi+\eta) \hat{\psi}^2(t) \diff \xi \diff \eta \frac{\diff t}{t}} \\
		&\leq C \sum_{i=1}^d \abs{\int_{\R^{d} \times (0,\infty)} t \xi_i \hat{f}(\xi) \frac{\hat{\psi}(\abs{\xi} t)}{t \xi_i}  \varphi_i (\xi) \hat{g}(-\xi) \hat{\psi}(\abs{\xi} t)  \diff \xi \frac{\diff t}{t}} \\
		&\leq C \sum_{i=1}^d \abs{\int_{\R^d \times (0,\infty)} t F_{\Psi_i} (\partial_i f) (y,t) G_\Psi(g)(y,t) \diff y \frac{\diff{t}}{t}}.
	\end{align*}
	By Theorem \ref{thm:collapsing_Holder_triangular_upper_half_space}, property $(i)$, the integral in the last display is bounded by
	\begin{equation*}
		\sum_{i=1}^d \norm{t F_{\Psi_i}(\partial_i f) G_\Psi(g)}_{L^1(\ell^1)}.
	\end{equation*}
	Applying outer H\"{o}lder's inequality, Proposition \ref{thm:Holder_inequality}, we estimate it in terms of
	\begin{equation*}
		\sum_{i=1}^d \norm{t F_{\Psi_i}(\partial_i f)}_{L^{p_\ast}(\ell^1)} \norm{G_\Psi(g)}_{L^{{p_\ast}'}(\ell^\infty)},
	\end{equation*}
	which by the strong type estimates in Theorem \ref{thm:embedding} is bounded by
	\begin{equation*}
		\sum_{i=1}^d \norm{\partial_i f}_{L^p(\R^d)} \norm{g}_{L^{{p_\ast}'}(\R^d)}.	
	\end{equation*}
	The duality between $L^p(\R^d)$ spaces and the density of Schwartz functions in $L^p(\R^d)$ yield the desired inequality.	A standard approximation argument yields the result for arbitrary $f \in W^{1,p}(\R^d)$.
	
	For $p=d$, we proceed in the same way with $f \in \mathcal{S}(\R^d)$ and $ g \in H^1(\R^d) \cap \mathcal{S}(\R^d)$, getting
	\begin{equation*}
		\abs{\< f,g \>} \leq \sum_{i=1}^d \norm{t F_{\Psi_i}(\partial_i f)}_{L^{\infty}(\ell^1)} \norm{G_\Psi(g)}_{L^{1}(\ell^\infty)},
	\end{equation*}
	which by the strong type estimates in Theorem \ref{thm:embedding} and by Proposition \ref{thm:embedding_Hardy_space} is bounded by
	\begin{equation*}
		\sum_{i=1}^d \norm{\partial_i f}_{L^d(\R^d)} \norm{g}_{H^1(\R^d)}.	
	\end{equation*}
	The duality between the spaces $BMO(\R^d)$ and $H^1(\R^d)$ and the density of Schwartz functions in $H^1(\R^d)$ yield the desired inequality.
	A standard approximation argument yields the result for arbitrary $f \in W^{1,d}(\R^d)$.
	
	For $p=1,d > 1$, the statement can be classically proved by the Loomis-Whitney inequality.
\end{proof}

\section{Appendix: outer $L^p$ space theory}
In this Appendix we review the theory of outer $L^p$ spaces in the level of generality discussed in \cite{MR3312633}.

\begin{definition} [Outer measure]
	Let $X$ be a set. An outer measure on $X$ is a function $\mu$ from the collection of all subsets of $X$ to $[0, \infty]$ that satisfies the following properties.
	\begin{enumerate}[(1)]
		\item $\mu(\emptyset) = 0$.
		\item If $E \subseteq F$ for two subsets of $X$, then $\mu(E) \leq \mu(F)$.
		\item If $\{E_i\}$ is a countable collection of sets in $X$, then
		\begin{equation*}
			\mu( \bigcup_{i=1}^\infty E_i) \leq \sum_{i=1}^\infty \mu(E_i).
		\end{equation*}
	\end{enumerate}
\end{definition}

\begin{definition} [Size]
	Let $X$ be a metric space. A size on $X$ is a function $S$ from the Cartesian product of $\mathcal{B}(X)$, the set of Borel measurable functions on $X$, and $\mathcal{P}(X)$, the power set of $X$, that satisfies, for every $f,g \in \mathcal{B}(X), A \subseteq X$, the following properties.
	\begin{enumerate}[(1)]
		\item If $\abs{f} \leq \abs{g}$, then $S(f)(A) \leq S(g)(A)$.
		\item $S(\lambda f)(A) = \abs{\lambda} S(f)(A)$ for every $\lambda \in \C$.
		\item There exists a constant $C$ depending only on $S$ but not on $f,g,A$ such that
		\begin{equation*}
			S(f+g)(A) \leq C [S(f)(A) + S(g)(A)].
		\end{equation*}
	\end{enumerate}
\end{definition}

We define
\begin{equation*}
	\norm{f}_{L^\infty(S)} = \sup_{A \subseteq X} S(f)(A),
\end{equation*}
and the outer $L^\infty(S)$ space to be the set of functions in $\mathcal{B}(X)$ for which this quantity is finite.

For $\lambda>0$, we define the super level measure
\begin{equation*}
	\mu(S(f)> \lambda) = \inf \{ \mu(A) \colon A {\textrm{ Borel subset of }} X, \norm{f1_{A^c}}_{L^\infty(S)} \leq \lambda \}.
\end{equation*}

For $0 < p < \infty$, we define
\begin{align*}
	\norm{f}_{L^{p}(S)} &= (\int_0^\infty p \lambda^p \mu(S(f) > \lambda) \frac{\diff \lambda}{\lambda})^{\frac{1}{p}}, \\
	\norm{f}_{L^{p,\infty}(S)} &= (\sup_{\lambda>0} \lambda^p \mu(S(f) > \lambda))^{\frac{1}{p}},
\end{align*}
and the outer $L^p(S), L^{p, \infty}(S)$ spaces to be the sets of functions in $\mathcal{B}(X)$ for which these quantities are finite, respectively. 

Finally, we recall some important results that hold in this setting.
\begin{proposition} [Pull back, Proposition 3.2 in \cite{MR3312633}] \label{thm:equivalence_outer_spaces}
	For $i=1,2$, let $X_i$ be a metric space, $\mu_i$ and $S_i$ be an outer measure and a size on $X_i$, respectively. Let $\Phi \colon X_1 \to X_2$ be a continuous map. Assume that for every $E_2 \in \mathcal{P}(X_2)$ we have
	\begin{equation*}
		\mu_1(\Phi^{-1}E_2) \leq A \mu_2(E_2).
	\end{equation*}
	Further assume that for each $E_1 \in \mathcal{P}(X_1)$, there exists $E_2 \in \mathcal{P}(X_2)$ such that for every $f \in \mathcal{B}(X_2)$ we have
	\begin{equation*}
		S_1(f \circ \Phi) (E_1) \leq B S_2(f) (E_2).
	\end{equation*}
	Then we have for every $f \in \mathcal{B}(X_2)$ and $0 < p \leq \infty$ and some universal constant $C$
	\begin{align*}
		\norm{f \circ \Phi}_{L^p(S_1)} &\leq A^{1/p} BC \norm{f}_{L^p(S_2)}, \\
		\norm{f \circ \Phi}_{L^{p,\infty}(S_1)} &\leq A^{1/p} BC \norm{f}_{L^{p,\infty}(S_2)}.
	\end{align*}
\end{proposition}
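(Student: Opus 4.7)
The plan is to reduce everything to a single transfer inequality for super level measures, namely
\begin{equation*}
\mu_1(S_1(f \circ \Phi) > B\lambda) \leq A \, \mu_2(S_2(f) > \lambda),
\end{equation*}
valid for every $f \in \mathcal{B}(X_2)$ and $\lambda > 0$. Once this is established, the strong and weak $L^p$ bounds follow by a direct change of variables in the layer-cake integral: substituting $\lambda = B \mu$ in the definition of $\|f \circ \Phi\|_{L^p(S_1)}^p$ yields $B^p A \|f\|_{L^p(S_2)}^p$, and taking a supremum in place of the integral yields the corresponding $L^{p,\infty}$ estimate. The case $p = \infty$ is separate and handled at the outset directly from hypothesis (2).

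The first step is therefore to show the $L^\infty$ inequality $\|f \circ \Phi\|_{L^\infty(S_1)} \leq B \|f\|_{L^\infty(S_2)}$: given any $E_1 \in \mathcal{P}(X_1)$, hypothesis (2) furnishes $E_2 \in \mathcal{P}(X_2)$ with $S_1(f\circ\Phi)(E_1) \leq B S_2(f)(E_2) \leq B\|f\|_{L^\infty(S_2)}$, and taking the supremum over $E_1$ closes the estimate. For the transfer inequality at level $\lambda$, fix $\varepsilon > 0$ and choose a set $E_2 \subseteq X_2$ with $\|f 1_{E_2^c}\|_{L^\infty(S_2)} \leq \lambda$ and $\mu_2(E_2) \leq \mu_2(S_2(f) > \lambda) + \varepsilon$. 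Set $E_1 = \Phi^{-1}(E_2)$; continuity of $\Phi$ ensures $E_1$ is an admissible candidate for the super level measure of $f \circ \Phi$. By hypothesis (1), $\mu_1(E_1) \leq A\mu_2(E_2)$. Moreover, the identity $(f \circ \Phi) 1_{E_1^c} = (f 1_{E_2^c}) \circ \Phi$ combined with the $L^\infty$ inequality applied to $f 1_{E_2^c}$ gives $\|(f \circ \Phi) 1_{E_1^c}\|_{L^\infty(S_1)} \leq B \|f 1_{E_2^c}\|_{L^\infty(S_2)} \leq B\lambda$. Hence $E_1$ competes for $\mu_1(S_1(f \circ \Phi) > B\lambda)$, giving the transfer inequality after letting $\varepsilon \to 0$.

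With the transfer inequality in hand, the strong estimate reads
\begin{equation*}
\|f \circ \Phi\|_{L^p(S_1)}^p = \int_0^\infty p\lambda^p \, \mu_1(S_1(f\circ\Phi) > \lambda)\,\frac{d\lambda}{\lambda} = B^p \int_0^\infty p\mu^p \, \mu_1(S_1(f\circ\Phi) > B\mu)\,\frac{d\mu}{\mu} \leq A B^p \|f\|_{L^p(S_2)}^p,
\end{equation*}
and the weak bound follows by replacing the integral with a supremum. The main obstacle is mostly bookkeeping: verifying the algebraic identity $(f\circ\Phi)1_{E_1^c} = (f 1_{E_2^c})\circ\Phi$ under the choice $E_1 = \Phi^{-1}(E_2)$, and confirming that continuity of $\Phi$ keeps $E_1$ within the Borel class used to define the super level measure. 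The universal constant $C$ absorbs the slack between the two equivalent formulations of the super level measure (via Borel sets versus arbitrary admissible sets) and any factor coming from the quasi-subadditivity constant in the definition of a size.
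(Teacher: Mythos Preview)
Your proof is correct and follows the standard argument. The paper itself does not prove this proposition but merely quotes it as Proposition~3.2 from \cite{MR3312633}; your approach---first establish the $L^\infty$ bound from hypothesis~(2), then derive the pointwise inequality $\mu_1(S_1(f\circ\Phi)>B\lambda)\leq A\,\mu_2(S_2(f)>\lambda)$ by pulling back a near-optimal witness via $E_1=\Phi^{-1}(E_2)$, then integrate or take the supremum in $\lambda$---is exactly the natural one and presumably coincides with the original.

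One small remark: your argument in fact delivers the conclusion with $C=1$, so the final paragraph about the constant absorbing slack from ``Borel versus arbitrary admissible sets'' or from the quasi-subadditivity of the size is unnecessary and slightly misleading. The preimage of a Borel set under a continuous map is Borel, so $E_1$ is already admissible with no loss, and the quasi-subadditivity constant never enters the argument.
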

\begin{proposition} [outer H\"{o}lder's inequality, Proposition 3.4 in \cite{MR3312633}] \label{thm:Holder_inequality}
	Let $X$ be a metric space, $\mu,\mu_1,\mu_2$ be three outer measures on $X$ such that $\mu \leq \mu_i$, for $i=1,2$. Let $S,S_1,S_2$ be three respective sizes such that for any Borel subset $A$, there exist $A_1,A_2$ such that for all $f_1,f_2 \in \mathcal{B}(X)$ we have
	\begin{equation*}
		S(f_1 f_2)(A) \leq S_1(f_1)(A_1) S_2(f_2)(A_2).
	\end{equation*}
	Let $p,p_1,p_2 \in (0,\infty]$ such that $1/p = 1/p_1 + 1/p_2$. Then, for every $f_1,f_2 \in \mathcal{B}(X)$,
	\begin{equation*}
		\norm{f_1 f_2}_{L^p(S)} \leq 2 \norm{f_1}_{L^{p_1}(S_1)} \norm{f_2}_{L^{p_2}(S_2)}.
	\end{equation*}
\end{proposition}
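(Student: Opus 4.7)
The plan is to reduce the $L^p$ H\"{o}lder inequality to a multiplicative super level measure estimate, and then integrate via the layer cake formula.

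The case $p = p_1 = p_2 = \infty$ is immediate: for every Borel $A$, picking the witnesses $A_1, A_2$ from the sizewise hypothesis gives $S(f_1 f_2)(A) \leq S_1(f_1)(A_1) S_2(f_2)(A_2) \leq \norm{f_1}_{L^\infty(S_1)} \norm{f_2}_{L^\infty(S_2)}$, and taking the supremum in $A$ concludes. For finite $p$ the main step is to prove that for every $\lambda_1, \lambda_2 > 0$,
\begin{equation*}
\mu(S(f_1 f_2) > \lambda_1 \lambda_2) \leq \mu_1(S_1(f_1) > \lambda_1) + \mu_2(S_2(f_2) > \lambda_2).
\end{equation*}
Given $\varepsilon > 0$, I would choose Borel $A_i$ with $\norm{f_i 1_{A_i^c}}_{L^\infty(S_i)} \leq \lambda_i$ and $\mu_i(A_i) \leq \mu_i(S_i(f_i) > \lambda_i) + \varepsilon$, and set $A = A_1 \cup A_2$. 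On $A^c = A_1^c \cap A_2^c$ the factorization $f_1 f_2 1_{A^c} = (f_1 1_{A_1^c})(f_2 1_{A_2^c})$ allows me to apply the sizewise hypothesis to this cutoff pair, yielding $S(f_1 f_2 1_{A^c})(B) \leq \lambda_1 \lambda_2$ uniformly in Borel $B$, hence $\norm{f_1 f_2 1_{A^c}}_{L^\infty(S)} \leq \lambda_1 \lambda_2$. Simultaneously, subadditivity of $\mu$ together with $\mu \leq \mu_i$ gives $\mu(A) \leq \mu_1(A_1) + \mu_2(A_2)$, so letting $\varepsilon \to 0$ concludes. The essential (mild) subtlety here is that in the sizewise hypothesis the witness sets depend on the target set but not on the functions tested, which is precisely what legitimizes re-using the same witnesses for the Borel cutoff functions.

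For the final step, assuming both $p_i$ finite, I would substitute $\lambda_i = c_i \lambda^{p/p_i}$ with the normalization $c_1 c_2 = 1$, so that the identity $1/p = 1/p_1 + 1/p_2$ enforces $\lambda_1 \lambda_2 = \lambda$. Plugging the super level inequality into the layer cake formula for $\norm{f_1 f_2}_{L^p(S)}^p$ and performing the change of variable $u_i = c_i \lambda^{p/p_i}$ separately in the two resulting summands reduces matters to
\begin{equation*}
\norm{f_1 f_2}_{L^p(S)}^p \leq c_1^{-p_1} \norm{f_1}_{L^{p_1}(S_1)}^{p_1} + c_2^{-p_2} \norm{f_2}_{L^{p_2}(S_2)}^{p_2},
\end{equation*}
after which balancing the two summands by the optimal choice of $c_1 c_2 = 1$ (equivalently, Young's inequality) yields the claimed H\"{o}lder-type bound, with exponents combining correctly because $p_1 \cdot p_2 / (p_1+p_2) = p$. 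The endpoint cases where exactly one $p_i = \infty$ fit the same scheme by taking $\lambda_i$ for the infinite exponent equal to $\norm{f_i}_{L^\infty(S_i)}$ directly, so that the corresponding term in the layer cake integral vanishes identically.
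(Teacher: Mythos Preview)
Your argument is correct and is essentially the standard proof of this proposition (the one given in the cited reference \cite{MR3312633}): the super level measure estimate via the union $A_1\cup A_2$, followed by the layer-cake integration with the scaling $\lambda_i=c_i\lambda^{p/p_i}$ and optimization in $c_1c_2=1$, is exactly how Do--Thiele proceed. Note that the present paper does not supply its own proof of this proposition but merely quotes it from \cite{MR3312633}, so there is nothing further to compare; your handling of the endpoint $p_i=\infty$ and of the uniformity of the witness sets in the functions is also in order.
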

\begin{proposition} [Marcinkiewicz interpolation] \label{thm:Marcinkiewicz_interpolation}
	Let $X$ be a measure space, $\mu$ and $S$ an outer measure and a size on $X$, respectively. Let $(Y,\nu)$ be a measure space. Let $1 \leq p_1 < p_2 \leq \infty, 1 \leq q_1 \neq q_2 \leq \infty$ such that $p_i \leq q_i$, for $i=1,2$. Let $T$ be a homogeneous quasi-subadditive operator that maps $L^{p_1}(Y,\nu)$ and $L^{p_2}(Y,\nu)$ to the space $\mathcal{B}(X)$ such that
	\begin{align*}
		\norm{T(f)}_{L^{q_1,\infty}(S)} \leq A_1 \norm{f}_{L^{p_1}(Y,\nu)}, \\
		\norm{T(f)}_{L^{q_2,\infty}(S)} \leq A_2 \norm{f}_{L^{p_2}(Y,\nu)}.
	\end{align*}
	Then we also have
	\begin{equation*}
		\norm{T(f)}_{L^q(S)} \leq A_1^\theta A_2^{1-\theta} C_{p,p_1,p_2} \norm{f}_{L^p(Y,\nu)},
	\end{equation*}
	where $0< \theta <1$ is such that
	\begin{equation*}
		\frac{1}{p}=\frac{\theta}{p_1}+\frac{1-\theta}{p_2}, \ \ \ \
		\frac{1}{p}=\frac{\theta}{q_1}+\frac{1-\theta}{q_2}.
	\end{equation*}
\end{proposition}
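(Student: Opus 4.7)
The proof follows the classical Marcinkiewicz template, adapted to the outer $L^p(S)$ setting. The central adaptation is that, since $\mu$ is only an outer measure, additivity of the super level measure is unavailable and must be replaced by a subadditivity-type inequality. Fix $f \in L^p(Y,\nu)$, $\lambda > 0$, and a threshold $\tau = \tau(\lambda) > 0$ to be chosen, and decompose $f = f^\tau + f_\tau$ with $f^\tau = f \mathbf{1}_{\{|f|>\tau\}}$ and $f_\tau = f \mathbf{1}_{\{|f| \leq \tau\}}$. Given near-optimal sets $A_1, A_2 \subseteq X$ witnessing $\mu(S(T(f^\tau)) > \lambda)$ and $\mu(S(T(f_\tau)) > \lambda)$ respectively, let $A = A_1 \cup A_2$. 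By quasi-subadditivity of $T$ and property $(3)$ of the size $S$, on the complement $A^c \subseteq A_1^c \cap A_2^c$ one gets, for every $E \subseteq X$,
\begin{equation*}
S(T(f) \mathbf{1}_{A^c})(E) \leq C \bigl[ S(T(f^\tau) \mathbf{1}_{A_1^c})(E) + S(T(f_\tau) \mathbf{1}_{A_2^c})(E) \bigr] \leq 2C\lambda,
\end{equation*}
and monotonicity and subadditivity of $\mu$ then yield
\begin{equation*}
\mu(S(T(f)) > 2C\lambda) \leq \mu(S(T(f^\tau)) > \lambda) + \mu(S(T(f_\tau)) > \lambda).
\end{equation*}

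Next I would insert the two weak type hypotheses and rewrite the outer $L^q(S)$ quasi-norm through the layer cake formula in \eqref{eq:outer_L_p_quasi_norm}. After absorbing the constant $2C$ into $\lambda$, the task reduces to estimating
\begin{equation*}
\int_0^\infty q\lambda^{q-1} \Bigl[ A_1^{q_1} \lambda^{-q_1} \norm{f^\tau}_{L^{p_1}(Y,\nu)}^{q_1} + A_2^{q_2} \lambda^{-q_2} \norm{f_\tau}_{L^{p_2}(Y,\nu)}^{q_2} \Bigr] \diff \lambda.
\end{equation*}
I would then choose $\tau = \tau(\lambda)$ as a power $\lambda^{q/p}$ of $\lambda$, rescaled by an appropriate power of $A_2/A_1$, so that the two contributions balance. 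Applying Fubini to exchange the $\lambda$ and $y$ integrations, inside each summand one is left with
\begin{equation*}
\int_Y |f(y)|^{p_i} \Bigl( \int_{0}^{\infty} \lambda^{q-1-q_i}\, \mathbf{1}_{\{y \in \text{range where }f\text{ is in the $i$-th piece}\}} \diff \lambda \Bigr) \diff \nu(y),
\end{equation*}
and the inner integral collapses to a constant multiple of $|f(y)|^{p-p_i}$ by the interpolation relations $1/p = \theta/p_1 + (1-\theta)/p_2$ and $1/q = \theta/q_1 + (1-\theta)/q_2$. Collecting terms and extracting the $q$-th root produces the desired bound $A_1^\theta A_2^{1-\theta} C_{p,p_1,p_2} \norm{f}_{L^p(Y,\nu)}$.

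The main obstacle is the endpoint $p_2 = \infty$, which has to be handled separately. There $f_\tau$ is pointwise bounded by $\tau$, so the weak type hypothesis for $T(f_\tau)$ gives $\norm{T(f_\tau)}_{L^{q_2,\infty}(S)} \leq A_2 \tau$. Rather than integrating this contribution in $\lambda$, one chooses $\tau$ so that $A_2 \tau \leq \lambda/(4C)$, forcing the $f_\tau$ piece out of the super level set at level $\lambda$, and only the $f^\tau$ term survives inside the layer cake integral, which can be bounded directly. Apart from this case distinction, the remaining work is bookkeeping: the quasi-subadditivity of $T$ and of $S$ each contribute universal constants, but these are independent of $T$ and of $(\mu, S, X, Y, \nu)$, and therefore get absorbed into $C_{p, p_1, p_2}$ without disturbing the homogeneous $A_1^\theta A_2^{1-\theta}$ dependence. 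A standard approximation through simple functions extends the estimate from a suitable dense subclass to all $f \in L^p(Y,\nu)$.
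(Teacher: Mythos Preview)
Your argument is correct and is the standard direct proof of Marcinkiewicz interpolation, with the one genuine adaptation (the subadditivity estimate for the super level measure via unions of near-optimal sets) made explicit and handled properly. The paper, however, does not reproduce this computation: its entire proof is a pointer to Appendix~B of Stein--Weiss, together with the single remark that the classical distribution function should be replaced by the super level measure $\mu(S(h)>\lambda)$ in the definition of the non-increasing rearrangement $h^\ast(t)=\inf\{\lambda:\mu(S(h)>\lambda)\le t\}$. In other words, the paper invokes the rearrangement formulation of Marcinkiewicz and observes that everything in that proof depends only on the monotonicity and subadditivity of the distribution function, which the super level measure also satisfies. Your route avoids rearrangements entirely and carries out the level-set splitting $f=f^\tau+f_\tau$ by hand; this is more self-contained and makes the role of the outer-measure subadditivity completely transparent, at the cost of having to track the off-diagonal bookkeeping (choice of $\tau(\lambda)$, the $p_2=\infty$ endpoint) yourself. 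Both approaches rest on the same structural fact, namely that the super level measure behaves enough like a distribution function for the classical machinery to go through.
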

\begin{proof}
	See Appendix B in \cite{MR0290095}. It is enough, for a function $h$ on $X$, to replace the quantity $\mu(\{h > \lambda\})$ with the super level measure at level $\lambda$ in the definition of the non-increasing rearrangement $h^\ast$. In particular, for a function $h \colon X \to \R$, the function $h^\ast \colon (0,\infty) \to (0,\infty)$ is defined by
	\begin{equation*}
		h^\ast(t) = \inf \{ \lambda \colon \mu(S(h)> \lambda) \leq t \}.
	\end{equation*} 
\end{proof}
\begin{proposition} [Radon-Nikodym measure differentiation, Proposition 3.6 in \cite{MR3312633}, Proposition 1.9 in \cite{Uraltsev}] \label{thm:ell^1_domination}
	Let $X$ be a measure space, $\mu$ and $S$ an outer measure and a size on $X$, respectively. Assume that $X$ is $\sigma$-finite with respect to $\mu$, i.e.
	\begin{equation*}
		X = \bigcup_{n \in \N} X_n, \ \ \ \ \mu(X_n) < \infty.
	\end{equation*}
Let $\nu$ be a nonnegative Borel measure on $X$. Then, if either for all $A \subseteq X$
	\begin{equation*}
		\mu(A) = 0 \Rightarrow \nu(A) = 0,
	\end{equation*}
	or for all $A \subseteq X$
	\begin{equation*}
		\frac{1}{\mu(A)} \int_A \abs{f(x)} \diff \nu(x) \leq C \norm{f}_{L^\infty(S)},
	\end{equation*}
	we have, for every $f \in L^\infty(S)$,
	\begin{equation}
		\abs{\int_X \abs{f(x)} \diff \nu(x)} \leq C \norm{f}_{L^1(S)},
	\end{equation}
	where the implicit constant $C$ is independent of $\norm{f}_{L^\infty(S)}$.
\end{proposition}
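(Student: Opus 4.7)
The plan is to reduce to a dyadic layer cake, construct a decreasing family of near-optimal super-level sets for $S(f)$, and telescope the $\nu$-integral shell by shell.

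First I would pass to a discrete layer cake: by monotonicity of $\lambda \mapsto \mu(S(f) > \lambda)$,
$$\|f\|_{L^1(S)} \sim \sum_{k \in \Z} 2^k \mu(S(f) > 2^k).$$
For each $k$ I choose $A_k \subseteq X$ witnessing the super-level measure up to a factor $2$: $\mu(A_k) \leq 2\mu(S(f) > 2^k)$ and $\|f 1_{A_k^c}\|_{L^\infty(S)} \leq 2^k$. Because $f \in L^\infty(S)$, we may set $A_k = \emptyset$ once $2^k > \|f\|_{L^\infty(S)}$, so $B_k := \bigcup_{l \geq k} A_l$ is a well-defined decreasing family with $\|f 1_{B_k^c}\|_{L^\infty(S)} \leq 2^k$, $\mu(B_k) \leq 2\sum_{l \geq k} \mu(S(f) > 2^l)$, and $\bigcap_k B_k = \emptyset$.

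Next I telescope: $X = C \sqcup \bigsqcup_k (B_k \setminus B_{k+1})$, where $C = X \setminus \bigcup_k B_k$. On the $k$-th shell, $|f| = |f 1_{B_{k+1}^c}|$. Under hypothesis $(2)$, rewritten as $\int_A |g|\,d\nu \leq C \mu(A) \|g\|_{L^\infty(S)}$ valid for all $g$ and $A$, I apply it with $g = f 1_{B_{k+1}^c}$ and $A = B_k \setminus B_{k+1}$ to obtain $\int_{B_k \setminus B_{k+1}} |f|\,d\nu \leq C \cdot 2^{k+1} \mu(B_k)$. Summing over $k$ and swapping the order of summation via Fubini collapses the double series into $\sum_l 2^l \mu(S(f) > 2^l)$, which is comparable to $\|f\|_{L^1(S)}$. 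Under hypothesis $(1)$, the same shell estimates are obtained by applying $\nu \ll \mu$ to each near-optimal cover at each dyadic level to transfer the $\mu$-bound to $\nu$.

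The main obstacle is the tail integral over $C$. Since $|f 1_C| \leq |f 1_{B_k^c}|$ pointwise for every $k$, the monotonicity axiom of $S$ gives $S(f 1_C)(A) \leq 2^k$ for every $A$ and every $k$, hence $\|f 1_C\|_{L^\infty(S)} = 0$. Under $(2)$ this directly kills $\int_A |f|\,d\nu$ on any $\mu$-finite $A$, and the $\sigma$-finiteness of $X$ with respect to $\mu$ upgrades this to $\int_C |f|\,d\nu = 0$. Under $(1)$, since the vanishing of $S$ does not a priori control $f$ pointwise $\nu$-a.e., the route is to show that $C$ is sandwiched in covers of arbitrarily small $\mu$-mass (using the near-optimal covers witnessing $\mu(S(f) > 2^k)$ for $k \to -\infty$) and then appeal to absolute continuity to conclude $\nu(C) = 0$. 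Combining the shell and tail estimates yields the desired bound, and a standard monotone approximation extends it to arbitrary $f \in L^\infty(S)$ with finite $L^1(S)$-norm.
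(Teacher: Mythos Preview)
The paper does not supply its own proof of this proposition; it is listed in the Appendix as a known result with citations to \cite{MR3312633} and \cite{Uraltsev}, so there is no in-paper argument to compare against directly.

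Your argument under hypothesis $(2)$, read as holding for all Borel functions and all $A$, is correct and is the standard proof: nest near-optimal witnesses into a decreasing family $B_k$, telescope $\int_X |f|\,d\nu$ over the shells $B_k\setminus B_{k+1}$, bound each shell by applying $(2)$ to $g=f1_{B_{k+1}^c}$, and close via Fubini on the double sum. The tail treatment via $\|f1_C\|_{L^\infty(S)}=0$ together with $\sigma$-finiteness is also fine. Note that this reading of $(2)$ is essential: if $(2)$ is only assumed for the single function $f$, you cannot replace $\|f\|_{L^\infty(S)}$ by $2^{k+1}$ on the shells and the argument collapses.

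Under hypothesis $(1)$ your sketch has a genuine gap. The implication $\mu(A)=0\Rightarrow\nu(A)=0$ is purely qualitative; it gives no inequality of the form $\nu(B_k)\lesssim\mu(B_k)$, so the phrase ``transfer the $\mu$-bound to $\nu$ at each dyadic level'' is not justified, and the claim that $C$ sits inside covers of arbitrarily small $\mu$-mass does not follow from $\|f\|_{L^1(S)}<\infty$. In fact the conclusion is false under $(1)$ for a general size: take any $\nu\ll\mu$ and the trivial size $S\equiv 0$, so that $\|f\|_{L^1(S)}=0$ for every $f$ while $\int|f|\,d\nu$ need not vanish. In the cited references the absolute-continuity hypothesis is paired with additional structure (for instance the $\ell^\infty$ size in \cite{MR3312633}, where $\|f1_{A_k^c}\|_{L^\infty(\ell^\infty)}\le 2^k$ is a genuine pointwise bound off a $\mu$-null, hence $\nu$-null, set). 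You should either specialise to that situation or drop the general-size claim under $(1)$.
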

\begin{proposition} \label{thm:log_convexity}
	Let $X$ be a measure space, $\mu$ an outer measure on $X$. For $0 < r_1 < r_2 \leq \infty$, let $\ell^{r_1},\ell^{r_2}$ be the sizes on $X$ defined by \eqref{eq:size}. Then, for every $0<p\leq \infty, r_1 < r < r_2$, there exists a constant $C=C(p,r,r_1,r_2)$ such that, for every $f \in \mathcal{B}(X)$,
	\begin{align*}
		\norm{f}_{L^p(\ell^r)} &\leq C (\norm{f}_{L^p(\ell^{r_1})}  + \norm{f}_{L^p(\ell^{r_2})}) , \\
		\norm{f}_{L^{p,\infty}(\ell^r)} &\leq C (\norm{f}_{L^{p,\infty}(\ell^{r_1})} + \norm{f}_{L^{p,\infty}(\ell^{r_2})}) .
	\end{align*}
\end{proposition}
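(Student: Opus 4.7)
The plan is to reduce the proposition to a pointwise inequality comparing the three sizes $\ell^{r_1}, \ell^r, \ell^{r_2}$ on every subset $A \subseteq X$, and then to transfer this to a comparison of super level measures.

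First I would prove, for every $A \subseteq X$ and every $f \in \mathcal{B}(X)$, the log-convex estimate
\begin{equation*}
\ell^r(f)(A) \leq \ell^{r_1}(f)(A)^{\theta} \, \ell^{r_2}(f)(A)^{1-\theta},
\end{equation*}
where $\theta \in (0,1)$ is defined by $\frac{1}{r} = \frac{\theta}{r_1} + \frac{1-\theta}{r_2}$. This is simply the classical log-convexity of $L^r$ norms with respect to the weighted counting measure $\mu(A)^{-1}\omega$ on $A$, which follows from H\"{o}lder's inequality; the case $r_2 = \infty$ is handled analogously and is even simpler. As an immediate corollary one obtains $\norm{f}_{L^\infty(\ell^r)} \leq \norm{f}_{L^\infty(\ell^{r_1})}^\theta \norm{f}_{L^\infty(\ell^{r_2})}^{1-\theta}$, which settles the case $p = \infty$.

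For the second step I would transfer the pointwise inequality to super level measures. Given $\lambda_1, \lambda_2 > 0$, pick near-optimal witnesses $A_1, A_2 \subseteq X$ for $\mu(\ell^{r_1}(f) > \lambda_1)$ and $\mu(\ell^{r_2}(f) > \lambda_2)$, respectively, and set $A = A_1 \cup A_2$. Since $A^c \subseteq A_i^c$, we have the pointwise bound $f1_{A^c} \leq f1_{A_i^c}$, hence $\ell^{r_i}(f1_{A^c})(B) \leq \lambda_i$ for every $B \subseteq X$ and $i = 1,2$. Applying the log-convex inequality to $f 1_{A^c}$ yields $\norm{f1_{A^c}}_{L^\infty(\ell^r)} \leq \lambda_1^\theta \lambda_2^{1-\theta}$, and subadditivity of $\mu$ gives
\begin{equation*}
\mu(\ell^r(f) > \lambda_1^\theta \lambda_2^{1-\theta}) \leq \mu(\ell^{r_1}(f) > \lambda_1) + \mu(\ell^{r_2}(f) > \lambda_2).
\end{equation*}
The natural specialization $\lambda_1 = \lambda_2 = \lambda$ yields $\mu(\ell^r(f) > \lambda) \leq \mu(\ell^{r_1}(f) > \lambda) + \mu(\ell^{r_2}(f) > \lambda)$ for every $\lambda > 0$.

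For the third step I would simply integrate or take the supremum over $\lambda$. Multiplying by $p\lambda^p$ and integrating against $\diff\lambda/\lambda$ gives $\norm{f}_{L^p(\ell^r)}^p \leq \norm{f}_{L^p(\ell^{r_1})}^p + \norm{f}_{L^p(\ell^{r_2})}^p$; taking the supremum gives the analogous inequality for $L^{p,\infty}$. In both cases one passes from an $\ell^p$-type sum to the $\ell^1$-type sum in the statement using the elementary bound $(a^p + b^p)^{1/p} \leq C_p (a + b)$, valid for all $0 < p \leq \infty$ with a constant depending only on $p$. There is no substantial obstacle here: the only technical points are the near-optimal selection of $A_1, A_2$ in the super level measures (which can be handled by the standard $(1+\epsilon)$-adjustment and absorbed into the constant $C$), and the $p < 1$ case of the elementary inequality just mentioned.
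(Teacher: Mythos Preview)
Your proposal is correct and follows essentially the same approach as the paper: both arguments prove the pointwise log-convexity $\ell^r(f)(A) \leq \ell^{r_1}(f)(A)^\theta \ell^{r_2}(f)(A)^{1-\theta}$, take near-optimal witnesses $A_1,A_2$ for the two super level measures, use their union together with subadditivity of $\mu$ to bound $\mu(\ell^r(f)>\lambda)$, and then integrate or take the supremum in $\lambda$. Your write-up is slightly more detailed (treating $p=\infty$ separately and making the $(1+\epsilon)$-adjustment explicit), but the mathematical content is identical.
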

\begin{proof}
	It is enough to prove that there exists a constant $c=c(r,r_1,r_2)$ such that, for every $\lambda >0$,
	\begin{equation*}
		\mu(\ell^r(f) > c \lambda) \leq C ( \mu(\ell^{r_1} (f) > \lambda) + \mu(\ell^{r_2} (f) > \lambda)).
	\end{equation*}
	The desired inequalities follow by multiplying the last display by $\lambda^p$ and either integrating or taking the supremum over all levels $\lambda >0$ . 
	
	Let $E_1, E_2 \subseteq X$ be two sets witnessing the super level measure at $\lambda$ up to a factor $2$ with respect to the sizes $\ell^{r_1}$ and $\ell^{r_2}$, respectively. In particular, we have
	\begin{equation*}
		2 \mu(\ell^{r_1}(f)>\lambda) \geq  \mu(E_1), \ \ \ \
		2 \mu(\ell^{r_2}(f)>\lambda) \geq  \mu(E_2).
	\end{equation*}
	Now let $E' = E_1 \cup E_2$. Then, for every $A \subseteq X$, we have
	\begin{equation*}
		\ell^r(f 1_{(E')^c}) (A) \leq c (\ell^{r_1} (f1_{(E_1)^c}) (A))^\theta (\ell^{r_1} (f1_{(E_2)^c}) (A))^{1-\theta} \leq c \lambda,
	\end{equation*}
	by logarithmic convexity of the $L^r$ spaces, where $0< \theta < 1$ satisfies
	\begin{equation*}
		\frac{1}{r} = \frac{\theta}{r_1} + \frac{1-\theta}{r_2}.
	\end{equation*}
To conclude, we observe that $\mu(E') \leq \mu(E_1) + \mu(E_2)$.
\end{proof}

\bibliographystyle{amsplain}
\bibliography{mybibliography}

\end{document}